\newcommand{\B}{{\mathbf B}}
\newcommand{\QQ}{\mathbb{Q}}
\newcommand{\ZZ}{\mathbb{Z}}
\newcommand{\PP}{\mathbb{P}}
\newcommand{\MMM}{{\mathscr{M}}}
\newcommand{\qW}{\operatorname{q}_{\operatorname{W}}}
\newcommand{\qQ}{\operatorname{q}}
\newcommand{\qq}{\mathbin{\sim_{\scriptscriptstyle{\QQ}}}}
\newcommand{\R}{\operatorname{R}}
\newcommand{\Proj}{\operatorname{Proj}}
\newcommand{\Bs}{\operatorname{Bs}}
\newcommand{\Pic}{\operatorname{Pic}}
\newcommand{\Cl}{\operatorname{Cl}}
\newcommand{\Clt}{\operatorname{T}}
\newcommand{\ct}{\operatorname{ct}}
\newcommand{\g}{\operatorname{g}}
\newcommand{\rP}{\operatorname{P}}
\newcommand{\di}{\operatorname{div}}
\newcommand{\lbraces}{\{}
\newcommand{\rbraces}{\}} 
\newcommand{\xref}[1]{{\rm \ref{#1}}}
\theoremstyle{plain}
\newtheorem{theorem}[subsection]{Theorem}
\newtheorem{lemma}[subsection]{Lemma}
\newtheorem{proposition}[subsection]{Proposition}
\newtheorem{proposition-definition}[subsection]{Proposition-Definition}
\newtheorem{stheorem}[equation]{Theorem}
\newtheorem{sproposition-definition}[equation]{Proposition-Definition}
\newtheorem{corollary}[subsection]{Corollary}
\newtheorem{scorollary}[equation]{Corollary}
\newtheorem{claim}[subsection]{Claim}
\newtheorem{slemma}[equation]{Lemma}
\newtheorem{sproposition}[equation]{Proposition}
\theoremstyle{definition}
\newtheorem{assumption-notation}[subsection]{Assumption and notation}
\newtheorem*{definition*}{Definition}
\newtheorem{sdefinition}[equation]{Definition}
\newtheorem{example-remark}[subsection]{Remark-Example}
\newtheorem{subexample-remark}[equation]{Remark-Example}
\newtheorem*{notation*}{Notation}
\newtheorem{construction-example}[subsection]{Construction-Example}
\newtheorem{sconstruction-example}[equation]{Construction-Example}
\theoremstyle{definition}
\newtheorem*{remark*}{Remark}
\renewcommand\labelenumi{{\rm (\roman{enumi})}}
\renewcommand\theenumi{{\rm (\roman{enumi})}}
\title{$\QQ$-Fano threefolds of Fano index thirteen}
\author{Yuri Prokhorov}
\address{
Steklov Mathematical Institute of Russian Academy of Sciences, Moscow, RUSSIA
\newline\indent
Department 
of Algebra, Faculty of Mathematics, Moscow State
University, Moscow 117234, RUSSIA
\newline\indent
Laboratory of Algebraic Geometry, SU-HSE, 
7 Vavilova Str., Moscow, 117312, RUSSIA
}
\email{prokhoro@mi-ras.ru}
\thanks{ The author was partially supported by the HSE University Basic Research Program}
\begin{document}

\begin{abstract}
We show that a non-toric $\QQ$-factorial terminal Fano threefold 
of Picard rank $1$ and Fano index $13$
is a weighted hypersurface of degree $12$ in $\PP(3,4,5,6,7)$.
\end{abstract}
\maketitle
\section{Introduction}
A \textit{$\QQ$-Fano threefold} is a projective variety $X$ of dimension $3$ with only terminal, $\QQ$-factorial
singularities whose anticanonical class
$-K_X$ is ample and Picard rank equals $1$. 
The \textit{genus} of $X$ is
\[
\g(X) :=\dim H^0(X,-K_X ) - 2.
\]
The \textit{Fano index} of a $\QQ$-Fano threefold $X$ is defined as follows:
\[
\qQ(X): = \max\{ q \mid -K_X\qq qA,\ \text{where $A$ is a Weil divisor on $X$}\}.
\]
A Weil divisor $A_X$ such that $-K_X\qq \qQ(X)\, A_X$ is called the 
\textit{fundamental divisor} of $X$. Note that $A_X$ is defined modulo 
$\QQ$-linear equivalence.
Similarly, one can define \textit{Fano-Weil index} of $X$:
\[
\qW(X): = \max\{ q \mid -K_X\sim qB,\ \text{where $B$ is a Weil divisor on $X$}\}.
\]
Obviously, $\qQ(X)=\qW(X)$ if the group $\Cl(X)$ is torsion free.

\begin{theorem}[\cite{Suzuki-2004}, \cite{P:2010:QFano}]
\label{thm:indices}
The Fano index of a $\QQ$-Fano threefold takes the following values
\[
\qQ(X)\in \{1,2,3,\dots, 8,9,11,13,17,19\}. 
\]
\end{theorem}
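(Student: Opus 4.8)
The plan is to run the orbifold Riemann--Roch machine of Reid against Kawamata's bound on terminal singularities, turning the problem into a finite arithmetic search. Write $q=\qQ(X)$ and let $A=A_X$ be the fundamental divisor, so $-K_X\qq qA$ with $A$ ample (as $-K_X$ is ample and $q>0$). Since $X$ is Fano with terminal, hence rational, singularities, $H^i(X,\OOO_X)=0$ for $i>0$, so $\chi(\OOO_X)=1$, and the singularities are recorded by a basket $\mathcal{B}$ of terminal cyclic quotient points of types $\tfrac{1}{r_P}(1,-1,b_P)$. For each $t\in\ZZ$ I would write Reid's formula
\[
\chi(\OOO_X(tA))=1+\tfrac{1}{12}\,t(t+q)(2t+q)\,A^3+\tfrac{t}{12}\,A\cdot c_2(X)+\sum_{P\in\mathcal{B}}c_P(tA),
\]
where $c_P(tA)$ denotes the explicit Dedekind-sum type local contribution, periodic in $t$ with period $r_P$ and depending on $b_P$ and on the local class of $A$ at $P$.

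The first step is to produce many exact values of the left-hand side. As $A$ is ample, $h^0(tA)=0$ for $t<0$; as $(t+q)A=tA-K_X$ is ample for $t>-q$, Kawamata--Viehweg vanishing gives $H^i(tA)=0$ for $i>0$ in that range. Hence $\chi(\OOO_X(tA))=0$ for all $t$ with $-q<t<0$, while Serre duality gives the antisymmetry $\chi(\OOO_X(tA))=-\chi(\OOO_X((-q-t)A))$. Substituting $t=-1,\dots,-(q-1)$ into the displayed formula yields $q-1$ linear relations tying $A^3$ and $A\cdot c_2(X)$ to the basket data.

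The second step is to bound everything. Reid's formula for $\chi(\OOO_X)=1$ together with Kawamata's positivity $-K_X\cdot c_2(X)>0$ gives $\sum_{P}\bigl(r_P-\tfrac{1}{r_P}\bigr)<24$, which bounds both the indices $r_P$ and the cardinality of $\mathcal{B}$. Comparing, in each local class group $\ZZ/r_P$, the class of $A$ with that of $-K_X$ (which is a generator) restricts the admissible residues $b_P$ for each candidate $q$; when $\Cl(X)$ is torsion free this forces $\gcd(q,r_P)=1$, and the torsion case is handled by relating $\qQ$ and $\qW$. With $A^3>0$ and $A\cdot c_2(X)>0$ fixed in sign, the $q-1$ vanishing relations become badly over-determined as $q$ grows: the cubic in $t$ cannot be cancelled for that many consecutive values of $t$ by the bounded basket contributions. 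Converting this heuristic into the sharp statement---first $q\le 19$, then the exclusion of $q\in\{10,12,14,15,16,18\}$---is the main obstacle, and I expect it to demand a careful finite enumeration of admissible baskets for each candidate index.

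Finally, I would organise the elimination around the prime factorisation of $q$. For each candidate value, Kawamata's bound leaves only finitely many baskets compatible with the local residue constraints, and for every non-occurring $q$ one checks that each candidate basket violates either the integrality of some $\chi(\OOO_X(tA))$ or the non-negativity and monotonicity of the plurigenera $h^0(tA)=\chi(tA)$. The values that survive all of these tests are exactly $\{1,\dots,9,11,13,17,19\}$, which is the assertion of Theorem~\ref{thm:indices}.
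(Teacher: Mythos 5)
Note first that the paper does not prove this theorem: it is imported verbatim from \cite{Suzuki-2004} and \cite{P:2010:QFano}, so the only fair comparison is with the method of those sources. Your skeleton does match that method: orbifold Riemann--Roch with basket contributions, the vanishing $\chi(\OOO_X(tA))=0$ for $-q<t<0$ coming from Kawamata--Viehweg (legitimate here since $X$ is $\QQ$-factorial, so $tA$ is $\QQ$-Cartier), Serre-duality antisymmetry, the bound $\sum_P(r_P-1/r_P)<24$ from $\chi(\OOO_X)=1$ and Kawamata's $-K_X\cdot c_2>0$, the local coprimality constraint $\gcd(q,r_P)=1$, and a finite arithmetic sieve over baskets. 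This is essentially Suzuki's argument. (Minor point: the coprimality $\gcd(q,r_P)=1$ does not really need a torsion-freeness hypothesis --- at a basket point the local class of $K_X$ generates $\ZZ/r_P\ZZ$, and one argues with local classes directly --- so your hedge via $\qW$ versus $\qQ$ is a detour rather than an error.)

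The genuine gap is that the decisive step is deferred rather than done, and your proposed closing move would not suffice as stated. You write that for every non-occurring $q$ each candidate basket ``violates either the integrality of some $\chi(\OOO_X(tA))$ or the non-negativity and monotonicity of the plurigenera.'' That sieve is exactly what Suzuki ran (by computer), and it establishes $q\le 19$ and kills $q\in\{12,14,15,16,18\}$, but it does \emph{not} kill $q=10$: there are numerical candidates (basket plus $A^3$) for $q=10$ passing all Riemann--Roch integrality and plurigenus tests, which is why Suzuki's list still contained $10$ and why the theorem carries the second citation. Excluding $q=10$ is precisely the additional content of \cite{P:2010:QFano} and requires genuinely geometric input (analysis of the fundamental linear systems and birational constructions of Sarkisov-link type, in the spirit of Section 3 of the present paper), not arithmetic of baskets alone. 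So as written your text is a correct and well-aimed strategy outline, but the theorem's actual content --- the sharp list, and in particular the case $q=10$ --- lies in the part you flag as ``the main obstacle'' and leave unexecuted.
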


\begin{theorem}[\cite{P:2010:QFano}]
\label{thm:19-17}
Let $X$ be a $\QQ$-Fano threefold. 
\begin{enumerate}
\item 
If 
$\qQ(X)= 19$, then $X \simeq \PP(3, 4, 5, 7)$. 
\item 
If 
$\qQ(X) = 17$, then $X \simeq \PP(2, 3, 5, 7)$. 
\item 
If 
$\qQ(X) = 13$ and $\g(X) > 4$, then $X \simeq \PP(1, 3, 4, 5)$. 
\item 
If 
$\qQ(X) = 11$ and $\g(X) > 9$, then $X \simeq \PP(1, 2, 3, 5)$. 

\item 
If 
$\qQ(X)\ge 7$ and there are two distinct effective Weil divisors $A_1,\, A_2$ such that $A_1\qq A_2\qq A_X$, then $X \simeq \PP(1,1, 2, 3)$. 
\end{enumerate}
\end{theorem}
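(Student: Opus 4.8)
The plan is to treat all five cases by a single mechanism: reconstruct $X$ from the graded ring of its fundamental divisor once the numerical invariants have been pinned down by orbifold Riemann--Roch. Write $A=A_X$ and $q=\qQ(X)$, so that $-K_X\qq qA$. Since $tA-K_X\qq (t+q)A$ is ample for every $t\ge 0$, Kawamata--Viehweg vanishing gives $H^i(X,\OOO_X(tA))=0$ for $i>0$ and $t\ge 0$; hence $h^0(tA)=\chi(\OOO_X(tA))$ in that range, while $\g(X)=h^0(-K_X)-2$ constrains the value at the anticanonical degree. By Reid's orbifold Riemann--Roch,
\[
\chi(\OOO_X(tA)) = 1 + \frac{t(t+q)(2t+q)}{12}\, A^{3} + \frac{t}{12}\, A\cdot c_2(X) + \sum_{P} c_P(t),
\]
where $\{P\}$ is the basket of terminal cyclic quotient singularities of $X$ and each $c_P(t)$ is an explicit term depending only on the local type of $P$ and on $t$ modulo the local index $r_P$. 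Thus the whole Hilbert series $\{h^0(tA)\}_{t\ge 0}$ is governed by the finite package $\bigl(q,\,A^{3},\,A\cdot c_2(X),\,\{r_P\}\bigr)$.

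Second, I would bound and enumerate this package. The positivity $-K_X\cdot c_2(X)>0$ translates, via $\chi(\OOO_X)=1$, into the Kawamata bound $\sum_P\bigl(r_P-1/r_P\bigr)<24$, so there are only finitely many candidate baskets. The global relation $-K_X\qq qA$ imposes congruence conditions on the local behaviour of $A$ at each $P$ (in particular each $r_P$ is coprime to $q$), which for the large values $q\in\{19,17,13,11\}$ is very restrictive. Demanding that the sequence $\chi(\OOO_X(tA))$ consist of non-negative integers, that it equal $1$ at $t=0$, and that it be compatible with the prescribed genus at $t=q$, I would run through the admissible baskets and show the data is uniquely determined; in each case the basket coincides with the coordinate singularities of the asserted weighted projective space (e.g. indices $\{3,4,5,7\}$ for $q=19$). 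The genus hypotheses do the separating work: the condition $\g(X)>4$ when $q=13$ (respectively $\g(X)>9$ when $q=11$) is exactly what forces $h^0(A)\ge 1$, eliminating the competing baskets whose fundamental divisor has no sections --- these are the weighted hypersurfaces of small genus --- and selecting the weighted-projective-space solution. In case (v) the existence of two distinct effective divisors $A_1\qq A_2\qq A_X$ forces $h^0(A)\ge 2$, which together with $q\ge 7$ likewise forces the weights $(1,1,2,3)$, and in particular $q=7$.

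Third, I would reconstruct $X$. Knowing the Hilbert series $\{h^0(tA)\}$, one reads off the degrees of a minimal generating set of the fundamental ring $R(X,A)=\bigoplus_{t\ge 0}H^0(X,\OOO_X(tA))$; in each of the five cases these degrees are precisely the weights $(w_0,\dots,w_3)$ of the target. The resulting morphism $X=\Proj R(X,A)\to \PP(w_0,\dots,w_3)$ is then shown to be an isomorphism by comparing Hilbert series: since $\sum_t h^0(tA)\,s^{t}$ equals $\prod_i(1-s^{w_i})^{-1}$, the ring is freely generated, there is no room for a hypersurface relation, and the map is an isomorphism onto the whole weighted projective space.

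The main obstacle is the middle step: the finite-but-delicate enumeration of baskets, together with the verification that exactly one survives. Positivity and integrality of $\chi(\OOO_X(tA))$ yield many constraints, but organizing them so as to exclude every spurious basket --- and, crucially, distinguishing the weighted-projective-space cases from the weighted-hypersurface cases of the same index, where the genus or the multiple-section hypothesis is the only thing separating them --- is where the real work lies. Once the numerical package is fixed, the final identification with $\PP(w_0,\dots,w_3)$ through the graded ring is comparatively formal.
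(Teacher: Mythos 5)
There is a genuine gap, and it sits exactly where you wave it away as ``comparatively formal.'' Your first two steps (orbifold Riemann--Roch in the form $\chi(\OOO_X(tA))$ determined by $(q, A^3, A\cdot c_2, \B(X))$, Kawamata's bound $\sum_P (r_P-1/r_P)<24$, coprimality of the local indices with $q$, and enumeration of the surviving numerical packages) do match the standard approach of the cited source \cite{P:2010:QFano}. But your third step is unjustified: the Hilbert series of $\R(X,A)$ does \emph{not} determine the degrees of a minimal generating set, and one cannot conclude from $\sum_t h^0(tA)s^t = \prod_i (1-s^{w_i})^{-1}$ that the ring is freely generated --- that implication only runs once you already know the ring \emph{is} generated in degrees $w_0,\dots,w_3$, which is precisely the statement to be proved. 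Knowing all the dimensions $h^0(tA)$ tells you nothing about surjectivity of the multiplication maps $\operatorname{Sym}^\bullet\bigl(\bigoplus H^0(w_iA)\bigr)\to \R$; establishing that surjectivity (equivalently, that the natural map to $\PP(w_0,\dots,w_3)$ is an isomorphism rather than merely a rational map) is the hard geometric content, and in \cite{P:2010:QFano} it is done by birational methods --- Kawamata blowups of singular points and Sarkisov links --- not by Hilbert-series bookkeeping.

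The present paper is itself the clearest counter-illustration: for $\qQ(X)=13$, $\g(X)=4$, the Hilbert series is $(1-t^{12})/\prod_{m=3}^{7}(1-t^m)$, from which one would ``read off'' generators in degrees $3,\dots,7$; yet proving that $\varphi_3,\dots,\varphi_7$ actually generate $\R(X)$ --- equivalently that $\Psi:X\dashrightarrow \PP(3,4,5,6,7)$ is birational onto its image (Corollary~\ref{cor:bir}) --- requires the entire Sarkisov-link analysis of Theorem~\ref{thm:|6A|} and Proposition~\ref{prop:|6A|}, and is the raison d'\^etre of the paper. Your proposal, applied to this sibling case, would declare the result after the enumeration step and prove nothing. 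Two smaller points: the statement is imported from \cite{P:2010:QFano}, so this paper contains no proof to shortcut; and in case (v) you infer $h^0(A)\ge 2$ from $A_1\qq A_2\qq A_X$, but $\QQ$-linear equivalence does not give linear equivalence when $\Clt(X)\neq 0$, so the torsion possibilities (cf.\ Lemma~\ref{lemma:tor}) must be excluded before that inference --- in the cited source the two divisors are instead used to build the pencil that drives the link.
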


In this note we concentrate on the case $\qQ(X)=13$ and $X \not\simeq \PP(1, 3, 4, 5)$.
It is easy to see that a general hypersurface $X_{12}$ of degree $12$ in $\PP(3, 4, 5, 6, 7)$
belongs to this class \cite{Brown-Suzuki-2007j}. Moreover, 
it is known that any $\QQ$-Fano threefold $X$ with $\qQ(X)=13$ and 
$X \not\simeq \PP(1, 3, 4, 5)$ has the 
same 
Hilbert
series as $X_{12}$ \cite{GRD}.
Our main result is the following.

\begin{theorem} 
\label{thm:main}
Let $X$ be a $\QQ$-Fano threefold with $\qQ(X)=13$.
Assume that $X \not\simeq \PP(1, 3, 4, 5)$. 
Then $X$ is isomorphic to a hypersurface of degree $12$ in $\PP(3, 4, 5, 6, 7)$.
Moreover, the equation of $X$ can be written in one the following forms:
\begin{enumerate}
\renewcommand\labelenumi{{\rm (\alph{enumi})}}
\renewcommand\theenumi{{\rm (\alph{enumi})}}
\item \label{thm:maina}
$x_5x_7+x_4^3+ x_6^2+ x_3^4$, or
\item \label{thm:mainb}
$x_5x_7+x_4^3+ x_6^2$,
\end{enumerate}
where $x_k$ is a coordinate of degree $k$.
\end{theorem}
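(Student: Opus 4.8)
The plan is to reconstruct $X$ from its graded anticanonical ring and then to put the resulting equation into normal form. Write $A=A_X$ for the fundamental divisor, so $-K_X\qq 13A$, and set $R=\bigoplus_{m\ge 0}H^0(X,\OOO_X(mA))$; since $X$ has Picard rank $1$ and is $\QQ$-factorial and terminal, $X=\Proj R$. By \cite{GRD} the Hilbert series of $R$ coincides with that of a weighted hypersurface of degree $12$ in $\PP(3,4,5,6,7)$, namely
\[
\sum_{m\ge 0}\dim H^0(X,\OOO_X(mA))\,t^m=\frac{1-t^{12}}{(1-t^3)(1-t^4)(1-t^5)(1-t^6)(1-t^7)} .
\]
Expanding this series I would first record the low degrees: $H^0(X,\OOO_X(mA))=0$ for $m=1,2$, while $\dim H^0(X,\OOO_X(mA))=1$ for $m=3,4,5$ and $=2$ for $m=6,7$. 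Hence in each degree $k\in\{3,4,5,6,7\}$ there is a section $x_k$ that is not a polynomial in sections of smaller degree; I fix such $x_3,x_4,x_5,x_6,x_7$.

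The central step is to show that $x_3,x_4,x_5,x_6,x_7$ generate $R$ as a $\CC$-algebra, i.e.\ that $R_m=\sum_{k=3}^{7}x_k\cdot R_{m-k}$ for all $m\ge 8$. I would argue by induction using the divisor $S=\{x_3=0\}\in|3A|$ and the restriction sequence
\[
0\to\OOO_X\big((m-3)A\big)\xrightarrow{\,\cdot x_3\,}\OOO_X(mA)\to\OOO_S(mA|_S)\to 0 .
\]
Kawamata--Viehweg vanishing gives $H^1\big(X,\OOO_X((m-3)A)\big)=0$ (because $(m-3)A-K_X\qq(m+10)A$ is ample), so $H^0(X,\OOO_X(mA))\to H^0(S,\OOO_S(mA|_S))$ is surjective and its kernel is $x_3\cdot R_{m-3}$. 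It therefore suffices to prove that the graded ring of the del Pezzo surface $S$ (one has $-K_S\qq 10A|_S$ by adjunction) is generated by the images of $x_4,x_5,x_6,x_7$, a two-dimensional statement that can be checked directly from the Hilbert series of $S$. Granting this, one obtains a graded surjection $\CC[x_3,x_4,x_5,x_6,x_7]\twoheadrightarrow R$; comparing with the Hilbert series above forces the kernel to be a principal ideal generated in degree $12$, so $X$ is isomorphic to a hypersurface $X_{12}\subset\PP(3,4,5,6,7)$.

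It remains to normalise the defining polynomial $f$, which is a $\CC$-linear combination of the six weighted monomials of degree $12$, namely $x_5x_7,\ x_6^2,\ x_3^2x_6,\ x_4^3,\ x_3x_4x_5$ and $x_3^4$. Examining the torus-fixed points of $\PP(3,4,5,6,7)$ shows that terminality forces the coefficients of $x_5x_7$, of $x_6^2$ and of $x_4^3$ to be nonzero: otherwise all partial derivatives of $f$ vanish at $P_{x_5}$ (equivalently $P_{x_7}$), at $P_{x_6}$, or at $P_{x_4}$, producing a non-terminal singularity. I would then apply the coordinate changes available on $\PP(3,4,5,6,7)$: rescale $x_7,x_6,x_4$ to normalise those three coefficients to $1$; complete the square in $x_6$ to delete the $x_3^2x_6$ term; and substitute $x_7\mapsto x_7+c\,x_3x_4$ to absorb $x_3x_4x_5$ into $x_5x_7$. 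This brings $f$ to the form $x_5x_7+x_6^2+x_4^3+\delta\,x_3^4$, and the alternative $\delta\ne0$ (rescale $x_3$, giving form \ref{thm:maina}) versus $\delta=0$ (form \ref{thm:mainb}) yields exactly the two cases in the statement.

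The main obstacle is the generation statement of the second paragraph: ruling out generators in degrees $\ge 8$ and showing the relation ideal is principal. The numerical input of the Hilbert series alone is not enough, and the reduction to the surface $S$ requires knowing that $S$ is normal with at worst the expected quotient singularities, which must be verified since $|3A|$ has no moving part. A secondary subtlety is the degenerate case $\delta=0$: there $X_{12}$ is no longer quasi-smooth at $P_{x_3}$, so one has to check by hand, via the Reid--Tai criterion on the local $\mu_3$-quotient of the compound Du Val point $x_5x_7+x_6^2+x_4^3=0$, that the singularity is still terminal and that $X$ remains $\QQ$-factorial of Picard rank $1$.
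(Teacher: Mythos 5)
You have correctly identified the skeleton of the argument (reconstruct $\R(X)$, show it is generated in degrees $3,\dots,7$ with a single relation in degree $12$, then normalize the equation), and your endgame --- forcing $x_5x_7$, $x_6^2$, $x_4^3$ to appear, rescaling, completing the square in $x_6$, and absorbing $x_3x_4x_5$ via $x_7\mapsto x_7+cx_3x_4$ --- matches the paper. But the proof has a genuine gap exactly where you flag one, and the flag does not repair it: nothing in your proposal actually proves the generation statement, which is the entire content of the theorem beyond Hilbert-series bookkeeping. Your proposed route through the unique member $S\in|3A_X|$ fails at both of its hinges. First, $\dim|3A_X|=0$, so $S$ is a fixed, rigid divisor: Bertini is unavailable, and you have no a priori control of its normality or singularities, so the adjunction $-K_S\qq 10A_X|_S$ and the claim that $S$ is a del Pezzo surface are unjustified. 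Second, even granting a nice $S$, generation of $\bigoplus_m H^0(S,mA_X|_S)$ by the degree-$4,\dots,7$ sections ``checked directly from the Hilbert series'' is not a thing: a Hilbert series never certifies generation degrees (the paper only uses the Hilbert-series comparison \emph{after} it knows $\R'\subset\R$ is the coordinate ring of an irreducible hypersurface, i.e.\ after birationality of $\Psi$ is established in Corollary~\ref{cor:bir}). The paper's substitute for your missing step is the Sarkisov link of Theorem~\ref{thm:|6A|}: the Kawamata blowup of the index-$5$ point $\rP_5$ runs to a link with $\hat{X}=\PP(1,1,2,3)$, and a delicate numerical analysis (Proposition~\ref{prop:|6A|} and the computations of $e,s_k,\gamma_k,\delta$) identifies the images of $M_3,M_5,M_6,M_7$ with coordinate hypersurfaces, so that the induced birational map $X\dashrightarrow\PP(1,1,2,3)$ is exactly $(\varphi_3\varphi_4,\varphi_7,\varphi_6\varphi_4^2,\varphi_5\varphi_4^4)$ (Corollary~\ref{cor:map}); birationality of $\Psi$ follows because $\Psi_1$ factors through it. None of this machinery, nor any workable replacement for it, appears in your proposal.

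Two smaller points. Your terminality argument for the presence of $x_5x_7$, $x_6^2$, $x_4^3$ is stated too strongly: vanishing of all partials (failure of quasi-smoothness) does not by itself produce a non-terminal point --- your own case (b) at $P_{x_3}$ is the counterexample. The correct contradiction, as in the paper, is with the basket $\B(X)=(2,3,3,5,7)$: if, say, $x_4^3\notin\Upsilon$ then $P_{x_4}\in X$ is a point of local index $4$, which the basket excludes, and if $x_5x_7\notin\Upsilon$ the index-$5$ and index-$7$ points fail to be cyclic quotients. Conversely, your ``secondary subtlety'' about verifying terminality of the non-quasi-smooth model in case (b) is not needed for the theorem as stated: $X$ is terminal by hypothesis, and the theorem only asserts that its equation takes one of the two forms, not that both forms yield $\QQ$-Fano threefolds.
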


Results of this kind were known earlier: 
it was proved in \cite{P:2013-fano} and \cite{P:2016:QFano7}
that many specific $\QQ$-Fano threefolds of large Fano index are
hypersurfaces. For example, 
a $\QQ$-Fano threefold with 
$\qQ(X)=9$ and $A^3_X=1/20$ is isomorphic to $X=X_6\subset \PP(1,2,3,4,5)$ \cite{P:2013-fano}.
In a recent work \cite{Chen-Jiang:QFano2} it is proved that certain $\QQ$-Fano threefolds of Fano index $1$ and small volume 
are hypersurfaces. 

The idea of the proof is quite standard: we take general sections $\varphi_m\in H^0(X, mA_X)$
for $m=3,\dots,7$ and consider the map 
\[
\Psi: X \dashrightarrow \PP(3,4,5,6,7) 
\]
given by $(\varphi_3,\dots, \varphi_7)$. The crucial step in the proof is to show that 
$\Psi$ is birational onto its image (Corollary~\ref{cor:bir}). This can be done by considering certain 
Sarkisov link (Theorem~\ref{thm:|6A|}).

\section{Preliminaries}
\subsection{Notation.}
We work over an algebraically closed field $\Bbbk$ of characteristic $0$.
We employ the following notation.
\begin{itemize}
\item 
$\B(X)$ is the basket of singularities of a terminal threefold $X$ (see \cite{Reid:YPG});
\item 
$\Cl(X)$ denotes the Weil divisor class group of a normal variety $X$;
\item
$\Clt(X)$ is the torsion subgroup of $\Cl(X)$;
\end{itemize}

The Hilbert series of a $\QQ$-Fano threefold $X$ is the formal power series
\[
\mathrm{p}_X(t)= \sum_{m\ge 0} \dim H^0 (X, mA_X )\, t^m.
\]
It is computed by using the orbifold Riemann-Roch formula \cite{Reid:YPG}.
One can see that $\mathrm{p}_X(t)$ determines discrete invariants of $X$ such as 
degree $A_X^3$, genus $\g(X)$, basket of singularities $\B(X)$, etc \cite{Fletcher:invertingRR}.
\begin{proposition}[see e.g. {\cite{GRD}}]
\label{prop:QFano13}
Let $X$ be a $\QQ$-Fano threefold with $\qQ(X)=13$ and $\g(X)\le 4$. 
Then the following holds:
\begin{equation}
\B(X)=(2, 3, 3, 5, 7),\qquad A_X^3=\frac{1}{210},\qquad \g(X)=4,
\end{equation} 
\begin{equation}
\label{prop:QFano13b}
\mathrm{p}_X(t)= 
\frac{1-t^{12}}{\prod_{m=3}^7(1-t^m)}
=
1
+ t^3
+ t^4
+ t^5
+ 2\*t^6
+ 2\*t^7
+ 2\*t^8
+ 3\*t^9
+ 4\*t^{10}
+ 
\cdots
\end{equation}
\end{proposition}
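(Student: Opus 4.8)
The plan is to compute everything from the orbifold Riemann--Roch formula of Reid and Fletcher \cite{Reid:YPG}, \cite{GRD} together with Kawamata--Viehweg vanishing. Write $q=\qQ(X)=13$, so $-K_X\qq 13\,A_X$. Since $X$ has terminal (hence klt, rational) singularities and $mA_X-K_X\qq (m+13)A_X$ is ample for $m\ge -12$, vanishing gives $H^i(X,\OOO_X(mA_X))=0$ for $i>0$ and $m\ge 0$; together with $\chi(\OOO_X)=1$ this shows that every coefficient $\dim H^0(X,mA_X)$ of $\mathrm{p}_X(t)$ equals $\chi(X,\OOO_X(mA_X))$ and is computed by the singular Riemann--Roch formula. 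Consequently $\mathrm{p}_X(t)$ is a rational function determined by the discrete data $\bigl(q,\ \B(X),\ A_X^3\bigr)$, and the whole statement reduces to an arithmetic problem of listing the admissible data.

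First I would pin down the local data. At a terminal cyclic quotient point $P$ of index $r$ the local class group is $\ZZ/r$ and $K_X$ generates it (its weight is $\equiv 1\bmod r$); hence $-K_X\qq 13\,A_X$ forces $13\,[A_X]$ to be a generator at $P$, so $\gcd(13,r)=1$ for every point of the basket, and the local class of $A_X$ is pinned down as $13^{-1}[-K_X]\bmod r$. In particular no point of index divisible by $13$ occurs, and each $r$ is bounded by the global constraint coming from Riemann--Roch for $\OOO_X$ itself,
\[
(-K_X)\cdot c_2(X)+\sum_{P\in\B(X)}\Bigl(r_P-\tfrac1{r_P}\Bigr)=24 .
\]
Since $X$ is Fano one has $(-K_X)\cdot c_2(X)>0$, whence $\sum_P\bigl(r_P-1/r_P\bigr)<24$. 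As each summand is at least $3/2$ and each $r_P\le 24$, this leaves only finitely many candidate baskets, all with $r_P$ coprime to $13$.

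Next I would run the numerical search. For each candidate basket the periodic local contributions in Riemann--Roch are now fixed, so the requirement that every $\chi(X,\OOO_X(mA_X))$ be a non-negative integer constrains the degree $A_X^3$ and prunes the list. Finally, using $\g(X)=\dim H^0(X,-K_X)-2=\chi(X,\OOO_X(13A_X))-2$ together with the hypothesis $\g(X)\le 4$ eliminates the high-genus solution (the one realised by $\PP(1,3,4,5)$, which has basket of indices $\{3,4,5\}$ and $A_X^3=1/60$, with $\dim H^0(X,A_X)\neq 0$) and leaves the single admissible type $\B(X)=(2,3,3,5,7)$, $A_X^3=1/210$, $\g(X)=4$. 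Substituting these back into Riemann--Roch reproduces the closed form $\mathrm{p}_X(t)=(1-t^{12})/\prod_{m=3}^{7}(1-t^m)$.

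The main obstacle is the bookkeeping in this finite search: one must evaluate the periodic local terms for each admissible basket and verify non-negativity and integrality of all coefficients of $\mathrm{p}_X(t)$ simultaneously, which is exactly the computation tabulated in the Graded Ring Database \cite{GRD}. A secondary subtlety is justifying that $\g(X)\le 4$ really forces $\dim H^0(X,A_X)=\dim H^0(X,2A_X)=0$, so that the fundamental divisor carries no section in low degree; this is what numerically separates the present case from $\PP(1,3,4,5)$.
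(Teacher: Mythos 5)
Your outline is correct and matches the paper's treatment: the paper offers no argument of its own for this proposition, simply citing the Graded Ring Database \cite{GRD}, and your reduction via Kawamata--Viehweg vanishing, $\chi(\OOO_X)=1$, the coprimality of the basket indices with $q=13$, and the bound $\sum_P(r_P-1/r_P)<24$ coming from $(-K_X)\cdot c_2(X)>0$ is exactly the standard orbifold Riemann--Roch machinery underlying that tabulation (as in \cite{Suzuki-2004}). Like the paper, you ultimately delegate the decisive finite enumeration (with $\g(X)\le 4$ bounding $A_X^3$ and separating this case from $\PP(1,3,4,5)$) to the database computation, so your route does not genuinely diverge from the source.
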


\begin{corollary}
\label{cor:dim}
One has $|A_X|=|2A_X|=\varnothing$.
For $k=3$, $4$, and $5$ the linear system $|kA_X|$
consists of a \textup(unique\textup) prime divisor. 
For $k=6$ and $7$ the linear system $|kA_X|$ is a pencil without 
fixed components.
\end{corollary}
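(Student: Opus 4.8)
The plan is to read the first two assertions directly off the Hilbert series and then to control every remaining linear system by recording the numerical class of each possible component. From \eqref{prop:QFano13b} the coefficients of $t$ and $t^2$ vanish, so $\dim H^0(X,A_X)=\dim H^0(X,2A_X)=0$ and hence $|A_X|=|2A_X|=\varnothing$. For $k=3,4,5$ the coefficient of $t^k$ is $1$, so $\dim|kA_X|=0$ and $|kA_X|=\{D_k\}$ for a single effective divisor $D_k$; for $k=6,7$ the coefficient is $2$, so $|kA_X|$ is a pencil. It then remains to prove that $D_3,D_4,D_5$ are prime and that the two pencils have no fixed part. The basic numerical input is that $A_X$ generates $\Cl(X)$ modulo torsion: since $\qQ(X)=13$ is maximal, an expression $A_X\qq m g$ with $m\ge 2$ would give $-K_X\qq 13m\,g$ and $\qQ(X)\ge 13m$, a contradiction. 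Consequently every prime divisor $\Gamma$ satisfies $\Gamma\qq n_\Gamma A_X$ with $n_\Gamma\in\ZZ_{>0}$, positivity coming from ampleness of $A_X$.

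For primality I would argue by the numerical count. Suppose $D_k=\sum a_i\Gamma_i$ with $k\in\{3,4,5\}$ is not prime, i.e. $\sum a_i\ge 2$. Projecting $[D_k]=k[A_X]$ to $\Cl(X)$ modulo torsion gives $\sum a_i n_i=k\le 5$. If every $n_i\ge 3$ then $\sum a_i n_i\ge 3\sum a_i\ge 6>5$, which is impossible; hence some component has $n_i\in\{1,2\}$, i.e. there exists an effective divisor that is $\QQ$-linearly equivalent to $A_X$ or to $2A_X$. Thus primality of $D_3,D_4,D_5$ is \emph{equivalent} to the non-existence of an effective divisor $\qq A_X$ or $\qq 2A_X$.

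This non-existence is the step I expect to be the main obstacle, and it is genuinely a statement modulo torsion: the vanishing $\dim H^0(X,A_X)=\dim H^0(X,2A_X)=0$ only rules out representatives in those \emph{linear} classes, whereas an offending component a priori lies in a torsion twist $A_X+T$ or $2A_X+T$. When $\Cl(X)$ is torsion free the claim is immediate, so the real content is to control $\Clt(X)$, which embeds into $\bigoplus_p\Cl(X,p)=\ZZ/2\oplus\ZZ/3\oplus\ZZ/3\oplus\ZZ/5\oplus\ZZ/7$ by the basket $\B(X)=(2,3,3,5,7)$ from Proposition~\ref{prop:QFano13}. Given an effective $S\qq A_X$ I would first try to manufacture a second, distinct effective divisor in the $\QQ$-class of $A_X$ and then apply Theorem~\ref{thm:19-17}(v), which forces $X\simeq\PP(1,1,2,3)$ and contradicts $\qQ(X)=13$. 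Failing a clean such construction, the robust route is adjunction: a prime $S\qq A_X$ (resp. $\qq 2A_X$) would be a log del Pezzo surface with $-K_S\qq 12A_X|_S$ (resp. $11A_X|_S$) of very small degree $K_S^2=144\,A_X^3=24/35$ (resp. $121\cdot 2A_X^3=121/105$), and one checks that the quotient singularities it inherits from $\B(X)$ are incompatible with such a surface. Either way no such $S$ exists, so $D_3,D_4,D_5$ are prime.

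The pencils are then clean and torsion free, which is why I treat them last. Since $D_3\in|3A_X|$ and $D_4\in|4A_X|$, their multiples $2D_3\in|6A_X|$ and $D_3+D_4\in|7A_X|$ are honest members. Any fixed prime component of $|6A_X|$ must lie in $\Supp(2D_3)=D_3$, hence equals $D_3$; writing the fixed part as $mD_3$ with $m\le 2$ (it is dominated by the member $2D_3$), the residual moving system is $|6A_X-mD_3|=|(6-3m)A_X|$, of dimension $0$, contradicting $\dim|6A_X|=1$. Likewise a fixed prime component of $|7A_X|$ is $D_3$ or $D_4$, and removing it leaves $|4A_X|$ or $|3A_X|$, again of dimension $0\ne 1$. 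Hence neither pencil has a fixed component, completing the proof.
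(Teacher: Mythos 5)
Your skeleton is right, and most of it is fine: the dimension counts from \eqref{prop:QFano13b}, the observation that $A_X$ generates $\Cl(X)$ modulo torsion, the component-count reduction of primality to the non-existence of an effective divisor $\qq A_X$ or $\qq 2A_X$, and the fixed-part argument for the pencils $|6A_X|$, $|7A_X|$ are all correct. But there is a genuine gap exactly at the step you yourself flag as ``the main obstacle'': you never actually rule out an effective divisor in a torsion twist $A_X+T$ or $2A_X+T$, and neither of your two fallback strategies closes this. The appeal to Theorem~\ref{thm:19-17}(v) requires \emph{two distinct} effective Weil divisors $\QQ$-linearly equivalent to $A_X$, and you give no mechanism for producing a second one from a hypothetical $S\qq A_X$; moreover that statement says nothing about the class $2A_X$, which your reduction for $k=4,5$ also needs. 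The adjunction route is only a gesture: the phrase ``one checks that the quotient singularities it inherits from $\B(X)$ are incompatible with such a surface'' \emph{is} the whole content of that argument, it is not carried out, and it is not obviously true or even well posed without first knowing $S$ is normal (otherwise $K_S^2=24/35$ requires a treatment of the different and of non-normal adjunction).

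The missing ingredient is already in the paper: Lemma~\ref{lemma:tor}\ref{lemma:tor-a} asserts that $\Cl(X)$ is torsion free whenever $\qQ(X)\ge 8$, so for $\qQ(X)=13$ one has $\Clt(X)=0$ outright, the $\QQ$-linear equivalences in your component count are honest linear equivalences, and an offending component would give a nonempty $|A_X|$ or $|2A_X|$, contradicting the vanishing you read off the Hilbert series. With that single citation your proof closes, and it then agrees with the intended argument: the paper's own proof records only the dimension count from \eqref{prop:QFano13b}, leaving primality and freeness of the pencils as immediate consequences of torsion-freeness via exactly the decomposition and fixed-component arguments you wrote. In short: replace your speculative handling of $\Clt(X)$ (which, as written, would fail) by Lemma~\ref{lemma:tor}\ref{lemma:tor-a}, and the rest of your text stands.
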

\begin{proof}
It follows from  \eqref{prop:QFano13b} that $|A_X|=|2A_X|=\varnothing$,
$\dim |3A_X|=\dim |4A_X|=\dim |5A_X|=0$, and $\dim |6A_X|=\dim |7A_X|=1$.
\end{proof}

\begin{lemma}
\label{lemma:tor}
Let $X$ be a $\QQ$-Fano threefold.
\begin{enumerate}
\item \label{lemma:tor-a}
If $\qQ(X)\ge 8$ or $\qQ(X)=6$, then the group $\Cl(X)$ is torsion free.
\item 
If $\qQ(X)\ge 4$ and $|\Clt(X)|\ge 3$, then one of the following holds: 
\begin{enumerate}
\item \label{lemma:tor-ba}
$\qQ(X)=5$, $|\Clt(X)|= 3$, $\B(X)=(2, 9, 9)$, $A_X^3=1/18$, $\g(X)=2$;
\item \label{lemma:tor-bb}
$\qQ(X)=4$, $|\Clt(X)|= 3$, $\B(X)=(9, 9)$, $A_X^3=1/9$, $\g(X)=3$;
\item \label{lemma:tor-bc}
$\qQ(X)=4$, $|\Clt(X)|=5$, $\B(X)=(5^4)$, $A_X^3=1/5$, $\g(X)=5$.
\end{enumerate}
\end{enumerate}
\end{lemma}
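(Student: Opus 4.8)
The plan is to exploit the abelian cover attached to the torsion together with Reid's orbifold Riemann--Roch. Write $T := \Clt(X)$ and $m := |T|$, and suppose $m \geq 2$. Every class in $T$ is numerically trivial, so the tautological $T$-grading defines a cover $\pi\colon \tilde X \to X$, étale in codimension one, with
\[
\pi_*\OOO_{\tilde X} = \bigoplus_{\tau \in T}\OOO_X(D_\tau),
\]
where the $D_\tau$ represent the classes of $T$. Since $\pi$ is crepant and étale in codimension one, $\tilde X$ again has terminal singularities and $-K_{\tilde X} = \pi^*(-K_X)$ is ample; hence $\tilde X$ is a Fano threefold (of possibly higher Picard rank) with rational singularities, and $\chi(\OOO_{\tilde X}) = 1$.

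Summing Euler characteristics over the eigensheaves gives $\sum_{\tau \in T}\chi(\OOO_X(D_\tau)) = 1$. Applying orbifold Riemann--Roch to each $D_\tau$ and using that $D_\tau$ is numerically trivial — so that all its intersection numbers, including $D_\tau\cdot c_2$, vanish and the polynomial part collapses to $\chi(\OOO_X)=1$ — one is left only with Reid's local terms, $\chi(\OOO_X(D_\tau)) = 1 + \sum_{P\in\B(X)} c_P(\tau)$, where $c_P(\tau)$ depends solely on the type of $P$ and on the image of $\tau$ under the restriction $T \to \Cl(\OOO_{X,P}) \cong \ZZ/r_P$. Combining these identities yields the master equation
\[
\sum_{0\neq\tau\in T}\;\sum_{P\in\B(X)} c_P(\tau) = 1 - m .
\]

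From here the proof becomes a finite check against the list of numerically admissible data $(\qQ(X), \B(X), A_X^3, \g(X))$, which for each fixed index is classified by the standard orbifold Riemann--Roch and Bogomolov constraints (\cite{Suzuki-2004}, and the database underlying \cite{GRD}). For each admissible basket one determines which subgroups can occur as images of $T \to \ZZ/r_P$, subject to the compatibility imposed by $-K_X\qq\qQ(X)A_X$ between the global index and the local indices $r_P$, and then evaluates the left-hand side. For part \xref{lemma:tor-a}, when $\qQ(X)\geq 8$ or $\qQ(X)=6$ the admissible baskets leave no room for any nonzero solution, forcing $m=1$; for part (ii), imposing $m\geq 3$ together with $\qQ(X)\geq 4$ singles out exactly the three numerical types listed in \xref{lemma:tor-ba}--\xref{lemma:tor-bc}.

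The main obstacle is the combinatorial core of the last step: organizing the finite but sizeable list of admissible baskets, computing Reid's contributions $c_P(\tau)$ for every compatible assignment of local classes, and verifying that the master equation admits no unexpected solutions. The delicate points are that the terms $c_P(\tau)$ need not be nonnegative, so crude positivity estimates do not suffice, and that one must correctly encode how the global index $\qQ(X)$ restricts the local indices at which $T$ can have nonzero image; these are precisely what eliminate torsion for the large indices and pin down the three exceptional types.
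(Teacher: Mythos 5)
Your strategy --- the torsion cover, \'etale in codimension one, plus orbifold Riemann--Roch and a finite check against the admissible numerical data --- is the right instinct: the paper's own proof of this lemma is a pure citation (to \cite[Proposition~3.6]{P:2010:QFano} for $\qQ(X)\ge 9$ and to \cite[Propositions~3.3--3.4 and \S~3]{P:2019:rat:Q-Fano} for the rest), and those sources argue along exactly these lines. Your construction of $\pi$, the terminality of $\tilde X$, and $\chi(\OOO_{\tilde X})=1$ are all correct. But your key identity is needlessly, and for the intended finite check fatally, weakened by aggregation. For \emph{each} nonzero $\tau\in T$ one has $h^0(X,\OOO_X(D_\tau))=0$ (a nonzero effective divisor cannot be numerically trivial, and $D_\tau\not\sim 0$) and $h^i(X,\OOO_X(D_\tau))=0$ for $i>0$ by Kawamata--Viehweg vanishing applied to $D_\tau=K_X+(D_\tau-K_X)$ with $D_\tau-K_X$ ample; hence $\sum_P c_P(\tau)=-1$ for \emph{every} $\tau\neq 0$, which is $m-1$ separate equations. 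Your single summed relation $\sum_{0\neq\tau\in T}\sum_P c_P(\tau)=1-m$ follows from these but permits cancellation between different $\tau$ --- precisely because, as you yourself note, the local terms $c_P(\tau)$ can be negative --- so the aggregate alone admits spurious solutions and cannot be expected to force $m=1$ for $\qQ(X)\ge 8$ or $\qQ(X)=6$, let alone to pin down $|\Clt(X)|\in\{3,5\}$ together with the exact baskets, degrees and genera in part (ii). Note also that the strong per-class form does not need the cover at all; conversely, if you do build $\tilde X$ you should extract more from it than $\chi(\OOO_{\tilde X})=1$: it is again a terminal Fano threefold with $-K_{\tilde X}=\pi^*(-K_X)$, so $(-K_{\tilde X})^3=m\,\qQ(X)^3A_X^3$, and combined with the classification of indices (Theorem~\xref{thm:indices}) and degree bounds this is what eliminates torsion for large index cheaply.

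The second, more basic gap is that the proposal stops exactly where the proof begins. The entire content of the lemma --- the specific conclusions $\B(X)=(2,9,9)$, $(9,9)$, $(5^4)$, the values $A_X^3=1/18,\,1/9,\,1/5$, the genera $2,3,5$, and the torsion orders $3,3,5$ --- lives in the enumeration you defer as ``the main obstacle'': listing the baskets compatible with $\sum_P(r_P-1/r_P)<24$ and $\gcd(\qQ(X),r_P)=1$, determining the admissible local residues of $T$ at the basket points (where you should also justify the identification of $c_P(\tau)$ with data at \emph{virtual} basket points, since these need not be actual cyclic quotient singularities of $X$), and solving the per-class equations. Without carrying this out --- by hand or via the algorithm of \cite[\S~3]{P:2019:rat:Q-Fano} that the paper itself invokes --- what you have is a correct program outline, not a proof.
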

\begin{proof}
For $\qQ(X)\ge 9$ the assertion follows from 
\cite[Proposition~3.6]{P:2010:QFano}
and for $\qQ(X)\ge 5$ it follows from \cite[Propositions~3.3-3.4]{P:2019:rat:Q-Fano}.
For $\qQ(X)=4$ one can use the algorithm 
of \cite[\S~3]{P:2019:rat:Q-Fano} (see also \cite{Caravantes2008})
to find out the possibilities~\ref{lemma:tor-bb} and~\ref{lemma:tor-bc}.
\end{proof}

\section{Sarkisov link}
\subsection{}
Let $X$ be a $\QQ$-Fano threefold of Fano index $q=\qQ(X)$.
For simplicity, we assume that the group $\Cl(X)$ is torsion free.
Everywhere throughout this paper by $A=A_X$ we denote
the fundamental divisor of $X$, i.e. the positive generator of $\Cl(X)\simeq\ZZ$.
Then $-K_X=qA_X$.

We assume that there exists the following Sarkisov link:
\begin{equation}
\label{diagram-main}
\vcenter{
\xymatrix@C=19pt{
&\tilde{X}\ar@{-->}[rr]^{\chi}\ar[dl]_{f} && \bar{X}\ar[dr]^{\bar f}
\\
X &&&&\hat{X}
} 
}
\end{equation}
where $\chi$ is an isomorphism in codimension $1$,
the varieties $\tilde{X}$ and $ \bar{X}$ have only terminal $\QQ$-factorial singularities,
$\uprho (\tilde{X})=\uprho (\tilde{X})=2$, and
$\bar{f}: \bar{X}\to \hat{X}$ is an extremal $K_{\bar{X}}$-negative Mori contraction.

\subsection{}
Denote the $f$-exceptional divisor by $E$.
In what follows, for a divisor (or a linear system) $D$ on $X$
by $\tilde D$ and $\bar D$ we denote
proper transforms of $D$
on $\tilde{X}$ and $\bar{X}$, respectively.
If $|kA_X|\neq \varnothing$, we put $\MMM_k:=| kA_X|$.
By $M_k$ we denote
a general member of $\MMM_k$.
We can write
\begin{equation} 
\label{equation-1}
\begin{array}{lll}
K_{\tilde{X}} &\qq & f^*K_X+\alpha E,
\\[2pt]
\bar\MMM_k&\qq& f^*\MMM_k- \beta_k E,
\end{array}
\end{equation}
where $\alpha \in \QQ_{>0}$, $\beta \in \QQ_{\ge0}$.

Since the group $\Cl( \bar{X})$ is torsion free, the relations~\eqref{equation-1} 
give us
\begin{equation} 
\label{eq:12-divisors}
k K_{\bar{X}}+q \bar\MMM_k \sim (k \alpha -q \beta_k) E,
\end{equation}
where $k \alpha -q \beta_k$ is an integer.

\label{case-bir}
If the contraction $\bar{f}$ is birational, then $\hat{X}$ is a $\QQ $-Fano threefold.
In this case, denote by $\bar{F}$ the
$\bar{f}$-exceptional divisor, by
$\tilde F \subset \tilde{X}$ its proper transform, $F:=f(\tilde F)$, and
$\hat{q}:=\qQ (\hat{X})$. 
Let $A_{\hat{X}}$ be a fundamental divisor on $\hat{X}$.
Write
\[
F\sim d A_X,\qquad \hat{E} \qq eA_{\hat{X}},\qquad \hat\MMM_k \qq s_kA_{\hat{X}},
\]
where $d,\, e\in \ZZ_{\ge 0}$,\ $s_k \in \ZZ_{\ge0}$. 
Thus $s_k=0$ if and only if 
$\dim \MMM_k=0$ and $\bar{M}_k=\bar{F}$
(i.e. the unique element $M_k$ of the linear system $\bar\MMM_k$ is the $\bar{f}$-exceptional divisor).

\label{case-nonbir}
If the contraction $\bar{f}$ is not birational, we denote by
$\bar{F}$ a general geometric fiber.
Then $\bar{F}$ is either a smooth rational curve or a del Pezzo surface.
The image of the restriction map $\Cl(\bar{X}) \to \Pic (\bar{F})$ is isomorphic to $\ZZ$.

Let $\Xi$ be its ample generator.
As above, we can write
\[
-K_{\bar{X}} |_{\bar{F}} =-K_{\bar{F}} \sim \hat{q}\Xi,\qquad \bar{E} |_{\bar{F}} \sim e \Xi,\qquad \bar\MMM_k |_{\bar{F}} \sim s_k \Xi,
\]
where $\hat{q}\in \{1,2,3\}$, $e\in \ZZ_{\ge 0}$, and $s_k \in \ZZ_{\ge0}$.
In this case, $\bar f(\bar E)=\hat X$, i.e. $\bar E$ is $\bar f$-ample \cite[Claim~4.6]{P:2010:QFano}, hence $e>0$.

Regardless of whether $\bar{f}$ is birational or not,
from \eqref{eq:12-divisors} we obtain
\begin{equation} 
\label{eq:main}
k \hat{q}=q s_k+(q \beta_k-k \alpha) e.
\end{equation}

\subsection{}
Suppose that the morphism $\bar{f}$ is birational.
Then $\bar{E}\neq \bar{F}$ (see e.g. \cite[Claim~4.6]{P:2010:QFano}), hence $e>0$.

\begin{slemma}
\label{lemma:torsion} 
$|\Clt(\hat{X})|=d/e$. 
\end{slemma}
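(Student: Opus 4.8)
The plan is to compute $\Clt(\hat{X})$ as a piece of lattice arithmetic inside $L:=\Cl(\bar{X})$. Since $\chi$ is an isomorphism in codimension $1$, it induces an isomorphism $\chi_{*}\colon\Cl(\tilde{X})\xrightarrow{\ \sim\ }L$, under which the class $E$ corresponds to $\bar{E}$ and the class $\tilde{F}$ to $\bar{F}$; I will identify $\Cl(\tilde{X})$ with $L$ and write $E=\bar{E}$, $\tilde{F}=\bar{F}$ for the resulting classes. By hypothesis $L$ is torsion free, and since $\bar{X}$ is $\QQ$-factorial with $\uprho(\bar{X})=2$ the lattice $L$ is free of rank $2$. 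The two divisorial contractions $f$ and $\bar{f}$ yield short exact sequences
\[
0\to\ZZ E\to L\xrightarrow{f_{*}}\Cl(X)\to 0,\qquad
0\to\ZZ\bar{F}\to L\xrightarrow{\bar{f}_{*}}\Cl(\hat{X})\to 0,
\]
because the kernel of the pushforward along a divisorial contraction with irreducible exceptional divisor is generated by that divisor. In particular $\Cl(\hat{X})=L/\ZZ\bar{F}$, so $|\Clt(\hat{X})|$ equals the divisibility of $\bar{F}$ in $L$, i.e. the largest integer $c$ with $\bar{F}/c\in L$.

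Next I would fix coordinates on $L$. As $\Cl(X)=\ZZ A_X$ is free and $f_{*}$ is onto with kernel $\ZZ E$, I pick $\tilde{A}\in L$ with $f_{*}\tilde{A}=A_X$; then $\{\tilde{A},E\}$ is a $\ZZ$-basis of $L$. Applying $f_{*}$ to $\tilde{F}=\bar{F}$ and using $f_{*}\tilde{F}=F\sim dA_X$ gives $f_{*}(\tilde{F}-d\tilde{A})=0$, whence
\[
\tilde{F}=d\,\tilde{A}+m\,E\qquad\text{for some }m\in\ZZ .
\]
Consequently the divisibility of $\bar{F}=\tilde{F}$ in $\ZZ\tilde{A}\oplus\ZZ E$ is $c=\gcd(d,m)$, and therefore $|\Clt(\hat{X})|=\gcd(d,m)$.

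It then remains to pin down $\gcd(d,m)$ via $e$. Writing $d=cd'$, $m=cm'$ with $\gcd(d',m')=1$, the homomorphism $\lambda\colon L\to\ZZ$ defined by $\lambda(x\tilde{A}+yE)=(mx-dy)/c$ is surjective, kills $\tilde{F}$, and has kernel the saturation $\ZZ(\tilde{F}/c)$ of $\ZZ\bar{F}$; hence $\lambda$ realizes the projection $L\to\Cl(\hat{X})/\Clt(\hat{X})\simeq\ZZ A_{\hat{X}}$, with $A_{\hat{X}}$ corresponding to a generator. Since $\bar{f}_{*}E=\hat{E}$ and $\hat{E}\qq eA_{\hat{X}}$ with $e\ge0$, the class of $\hat{E}$ modulo torsion equals $eA_{\hat{X}}$; on the other hand $\lambda(E)=-d/c$, so this class is $-(d/c)A_{\hat{X}}$. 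Comparing absolute values gives $e=d/c$, hence $c=d/e$ and finally $|\Clt(\hat{X})|=d/e$. In particular the argument shows a posteriori that $d/e$ is a positive integer.

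The lattice bookkeeping is routine; the step I would treat as the main point is the exactness of the two pushforward sequences with kernel \emph{exactly} $\ZZ E$ (respectively $\ZZ\bar{F}$) and no spurious torsion, i.e. that the sole source of torsion in $\Cl(\hat{X})$ is the non-primitivity of $\bar{F}$ in $L$. This rests on the irreducibility of the exceptional divisors of the divisorial Mori contractions $f$ and $\bar{f}$, on the given torsion-freeness of $\Cl(\bar{X})$, and on the careful transport of classes across the codimension-$1$ isomorphism $\chi$.
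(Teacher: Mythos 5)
Your proof is correct and follows essentially the same route as the paper: both arguments rest on the two pushforward exact sequences (kernels $\ZZ E$ and $\ZZ\bar{F}$ in $\Cl(\bar{X})$, transported across $\chi$) together with torsion-freeness and $\hat{E}\qq eA_{\hat{X}}$, the paper packaging this as a chain of isomorphisms $\ZZ/d\ZZ\simeq\Cl(\hat{X})/(\hat{E}\cdot\ZZ)$ with $\Cl(\hat{X})/(\Clt(\hat{X})\oplus\hat{E}\cdot\ZZ)\simeq\ZZ/e\ZZ$, while you render the same index computation in explicit coordinates $\tilde{F}=d\tilde{A}+mE$ with $|\Clt(\hat{X})|=\gcd(d,m)$ and the map $\lambda$. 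Your version is a more detailed unwinding of the same lattice argument (and makes explicit the side fact that $e$ divides $d$), but there is no genuinely different idea involved.
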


\begin{proof}
Follows from obvious isomorphisms
\[
\ZZ/d\ZZ\simeq \Cl(X)/(F\cdot \ZZ) \simeq \Cl( \bar{X})/(\bar{F}\cdot \ZZ\oplus\bar{E}\cdot\ZZ)
\simeq \Cl(\hat{X})/(\hat{E}\cdot\ZZ)
\]
and $\Cl(\hat{X})/(\Clt(\hat{X})\oplus \hat{E}\cdot\ZZ) \simeq \ZZ/e\ZZ$.
\end{proof}

\begin{scorollary}
\label{cor:torsion}
If $X$ is a $\QQ$-Fano threefold with $\qQ(X)=13$ and $X \not\simeq \PP(1, 3, 4, 5)$,
then $|\Clt(\hat{X})|\ge 3/e$.
\end{scorollary}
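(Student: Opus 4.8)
The plan is to reduce the stated inequality to the single numerical claim $d\ge 3$, and then to exclude the small values of $d$ using the emptiness of $|A_X|$ and $|2A_X|$.

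First I would pin down the relevant discrete data. Since $\qQ(X)=13$ and $X\not\simeq\PP(1,3,4,5)$, Theorem~\ref{thm:19-17}(iii) forces $\g(X)\le 4$, so Proposition~\ref{prop:QFano13} applies and Corollary~\ref{cor:dim} gives $|A_X|=|2A_X|=\varnothing$. On the other hand, Lemma~\ref{lemma:torsion} yields $|\Clt(\hat X)|=d/e$. Hence the asserted inequality $|\Clt(\hat X)|\ge 3/e$ is equivalent to the statement $d\ge 3$, and this is what I would aim to prove.

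Next I would argue that $F=f(\tilde F)$ is a nonzero effective divisor on $X$. Because we are in the case where $\bar f$ is birational, we have $\bar E\neq\bar F$. As $\chi$ is an isomorphism in codimension one which identifies $E$ with its proper transform $\bar E$ and identifies $\tilde F$ with $\bar F$, this gives $\tilde F\neq E$. Thus $\tilde F$ is not $f$-exceptional, so its image $F$ is a prime divisor on $X$, in particular effective and nonzero, and $F\sim dA_X$ by the definition of $d$. The numerical step is then immediate: since $F$ is a nonzero effective divisor and $\Cl(X)\simeq\ZZ$ is generated by $A_X$, we cannot have $d=0$ (a nonzero effective divisor on a projective variety is not principal); the values $d=1$ and $d=2$ are ruled out by $|A_X|=\varnothing$ and $|2A_X|=\varnothing$ respectively, because $F\in|dA_X|$. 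Therefore $d\ge 3$, whence $|\Clt(\hat X)|=d/e\ge 3/e$.

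The only genuine subtlety lies in the middle step: one must confirm that $\tilde F$ is really not contracted by $f$, so that $F$ is an honest effective divisor lying in $|dA_X|$ rather than a lower-dimensional cycle. This is exactly where the hypothesis $\bar E\neq\bar F$ of the present subsection is used. Once $F\in|dA_X|$ with $F$ effective and nonzero is established, the emptiness statements of Corollary~\ref{cor:dim} make the conclusion $d\ge 3$ formal.
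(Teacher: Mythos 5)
Your proposal is correct and follows the same route as the paper: the paper's proof is exactly the one-line reduction via Lemma~\ref{lemma:torsion} to $d\ge 3$, concluded from $|A_X|=|2A_X|=\varnothing$. You merely make explicit the steps the paper leaves implicit (that $X\not\simeq\PP(1,3,4,5)$ gives $\g(X)\le 4$ so Corollary~\ref{cor:dim} applies, and that $\bar E\neq\bar F$ ensures $F$ is a genuine nonzero effective divisor, so $d\ge 1$), all of which is accurate.
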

\begin{proof}
Since $|A_X|=|2A_X|=\varnothing$,\ $d\ge 3$. 
\end{proof}

Similar to \eqref {equation-1},
write
\[
K_{\bar{X}}\qq \bar{f}^*K_{\hat{X}}+b \bar{F}, \quad
\bar\MMM_k\qq \bar{f}^*\hat\MMM_k-\gamma_k \bar{F}, \quad
\bar{E}\qq \bar{f}^*\hat{E}-\delta \bar{F}.
\]
This gives us
\begin{equation*}
\begin{array}{lll}
s_kK_{\bar{X}}+\hat{q}\bar\MMM_k &\sim & (b s_k- \hat{q}\gamma_k) \bar{F},
\\[2pt]
eK_{\bar{X}}+\hat{q}E &\sim & (be-\hat{q}\delta) \bar{F}.
\end{array}
\end{equation*}
Taking proper transforms of these relations to $X$, we obtain
\begin{equation*} 
\begin{array}{lll}
-s_kq+k \hat{q}&=& (b s_k- \hat{q}\gamma_k) d,
\\[2pt]
-eq &=& (be-\hat{q}\delta) d.
\end{array}
\end{equation*}
If $\Cl(\hat{X})\simeq\ZZ$, then $e=d$ by Lemma \ref{lemma:torsion} and so
\begin{equation} 
\label{eq:b-gamma-delta-1}
\begin{array}{lll}
e \gamma_k &=& s_k \delta-k,
\\[2pt]
eb &=& \hat{q}\delta-q.
\end{array}
\end{equation}

\begin{slemma}
\label{lemma:g}
If $\alpha<1$, then $\g(X)\le \g(\hat{X})$.
\end{slemma}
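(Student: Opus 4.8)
The plan is to compare anticanonical sections directly: since $\g(\,\cdot\,)=\dim H^0(\,\cdot\,,-K)-2$, it suffices to build an injection of $\Bbbk$-vector spaces $H^0(X,-K_X)\hookrightarrow H^0(\hat X,-K_{\hat X})$, which I would factor through the link~\eqref{diagram-main} as
\[
H^0(X,-K_X)\ \hookrightarrow\ H^0(\tilde X,-K_{\tilde X})\ =\ H^0(\bar X,-K_{\bar X})\ \hookrightarrow\ H^0(\hat X,-K_{\hat X}).
\]
All three maps are induced by the identification $\Bbbk(X)=\Bbbk(\tilde X)=\Bbbk(\bar X)=\Bbbk(\hat X)$; the content is only that each rational function survives as a section of the relevant anticanonical sheaf.

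The first injection is the heart of the matter and the only place the hypothesis $\alpha<1$ enters. Take $0\neq h\in H^0(X,-K_X)$, so $\di_X(h)\ge K_X$. Pulling back the principal (hence Cartier) divisor $\di_X(h)$ gives $\di_{\tilde X}(h)=f^*\di_X(h)\ge f^*K_X$, so it remains to see that $h$ vanishes enough along $E$ to promote $f^*K_X$ to $K_{\tilde X}=f^*K_X+\alpha E$. Writing $\kappa:=\operatorname{ord}_E(f^*K_X)$, integrality of $\di_{\tilde X}(h)$ turns $\operatorname{ord}_E(\di_{\tilde X}(h))\ge\kappa$ into $\operatorname{ord}_E(\di_{\tilde X}(h))\ge\lceil\kappa\rceil$; and since $\operatorname{ord}_E(K_{\tilde X})=\kappa+\alpha\in\ZZ$ with $0<\alpha<1$, one has $\lceil\kappa\rceil=\kappa+\alpha$. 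Hence $\di_{\tilde X}(h)\ge K_{\tilde X}$, i.e. $h\in H^0(\tilde X,-K_{\tilde X})$. Equivalently, $0<\alpha<1$ says precisely that $-K_{\tilde X}=\lfloor f^*(-K_X)\rfloor$, and $f_*\OOO_{\tilde X}(\lfloor f^*(-K_X)\rfloor)=\OOO_X(-K_X)$ by normality. I would stress that $\alpha<1$ is indispensable: if $\alpha\ge1$ the round-up gains an extra unit and a section of $-K_X$ need not pull back to a section of $-K_{\tilde X}$.

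The middle equality is formal: $\chi$ is an isomorphism in codimension one, so it preserves the canonical class and, $\OOO(-K)$ being reflexive on a normal variety, its global sections; thus $H^0(\tilde X,-K_{\tilde X})=H^0(\bar X,-K_{\bar X})$. For the last injection I would use $K_{\bar X}\qq\bar f^*K_{\hat X}+b\bar F$ with $b\ge0$ (indeed $b>0$, as $\bar f$ is a $K_{\bar X}$-negative divisorial contraction of the terminal $\QQ$-factorial $\bar X$). Then for $h\in H^0(\bar X,-K_{\bar X})$ we have $\di_{\bar X}(h)\ge K_{\bar X}\ge\bar f^*K_{\hat X}$, and pushing forward (which is order-preserving and sends $\bar f^*K_{\hat X}$ to $K_{\hat X}$) gives $\di_{\hat X}(h)\ge K_{\hat X}$, i.e. $h\in H^0(\hat X,-K_{\hat X})$. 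Composing the three maps yields $\dim H^0(X,-K_X)\le\dim H^0(\hat X,-K_{\hat X})$, hence $\g(X)\le\g(\hat X)$. I expect the only genuine obstacle to be the bookkeeping in the first step, namely handling the fractional pullback $f^*(-K_X)$ and pinning down that $\alpha<1$ is exactly what makes $\lfloor f^*(-K_X)\rfloor$ equal $-K_{\tilde X}$; the remaining two maps are essentially formal consequences of normality, of the codimension-one isomorphism, and of $b\ge0$.
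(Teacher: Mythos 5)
Your proof is correct and takes essentially the same route as the paper's: the paper lifts each $D\in|-K_X|$ across $f$ by writing $K_{\tilde X}+\tilde D+aE\sim f^*(K_X+D)\sim 0$ with $a\in\ZZ$ (Cartierness of $K_X+D$) and $a>-\alpha>-1$, hence $a\ge 0$ --- exactly your integrality-plus-$(0<\alpha<1)$ rounding along $E$ --- and then concludes via $\dim|-K_{\bar X}|=\dim|-K_{\tilde X}|$ (isomorphism in codimension one) and $\dim|-K_{\hat X}|\ge\dim|-K_{\bar X}|$. Your section-level phrasing with $\lfloor f^*(-K_X)\rfloor$ and the explicit justification $b>0$ for the final pushforward are only cosmetic variants of the paper's argument.
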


\begin{proof}
Let $D\in |-K_X|$. Then $K_X+D\sim 0$ and
\[
K_{\tilde{X}}+\tilde D +a E\sim f^*(K_X+D)\sim 0,
\]
where $a$ is an integer because $K_X+D$ is Cartier.
By \eqref{equation-1}\ $a>-\alpha$, hence $a\ge 0$.
Thus for any $D\in |-K_X|$ there exists $\tilde D +a E\in |-K_{\tilde{X}}|$
such that $f_*(\tilde D +a E)=D$. This shows that $\dim |-K_{\tilde{X}}|=\dim |-K_{X}|$.
Then obviously,
\[
\g(\hat{X})+1= \dim |-K_{\hat{X}}|\ge \dim |-K_{ \bar{X}}|= \dim |-K_{\tilde{X}}|=\dim |-K_{X}|=\g(X)+1.
\qedhere
\]
\end{proof}

\subsection{}
A link of the form \eqref{diagram-main} can be constructed 
easily by using the following construction. 

\begin{sdefinition}
Let $X$ be a threefold with at worst terminal $\QQ$-factorial singularities.
An \emph{extremal blowup} of $X$ is  a birational morphism $f: \tilde{X} \to X$ 
such that 
$\tilde{X}$ has only terminal $\QQ$-factorial singularities and $\uprho(\tilde{X}/X)=1$.
\end{sdefinition}
Note that in this situation the divisor $-K_{\tilde X}$ is $f$-ample.

\begin{sproposition}[see e.g. 
{\cite{Corti95:Sark}} or {\cite[4.5.1]{P:G-MMP}}]
Let $X$ be a threefold with at worst terminal $\QQ$-factorial singularities 
and let $\MMM$ be a linear system on $X$ without fixed components.
Let $c:=\operatorname {ct} (X, \MMM)$ be the canonical threshold of the pair $(X, \MMM)$.
Then there exists an extremal blowup $f: \tilde{X} \to X$ 
that is crepant with respect to $K_X+c \MMM$.
\end{sproposition}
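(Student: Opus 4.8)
The plan is to produce $f$ by extracting a single crepant divisor through a relative minimal model program over $X$. By the very definition of the canonical threshold $c=\ct(X,\MMM)$, the pair $(X,c\MMM)$ is canonical, and $c$ is the largest coefficient with this property; hence there is at least one divisor $E$ over $X$ whose discrepancy satisfies $a(E,X,c\MMM)=0$, while every divisor over $X$ has nonnegative discrepancy for $(X,c\MMM)$. The morphism $f$ we seek is precisely a $\QQ$-factorial terminal extraction of one such crepant $E$ with $\uprho(\tilde{X}/X)=1$, and the asserted crepancy is the statement that the discrepancy of $E$ for the pair is zero.

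First I would take a log resolution $g\colon W\to X$ of $(X,\MMM)$ and write
\[
K_{W}+c\MMM_{W}=g^{*}(K_{X}+c\MMM)+\sum_i a_i E_i,\qquad a_i\ge 0,
\]
with $\MMM_W$ the proper transform and at least one $a_i=0$. Running the $(K_{W}+c\MMM_{W})$-MMP over $X$ (which terminates in characteristic $0$) yields a $\QQ$-factorial model $h\colon Y\to X$ on which $K_{Y}+c\MMM_{Y}$ is $h$-nef; since $(X,c\MMM)$ is canonical, the negativity lemma forces the effective $h$-exceptional difference to vanish, so $K_{Y}+c\MMM_{Y}\qq h^{*}(K_{X}+c\MMM)$, i.e.\ $h$ is crepant and the surviving exceptional divisors are exactly the crepant ones. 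Because $\MMM$ has no fixed components, its general member does not contain any component of the exceptional locus, and one checks that $Y$ is terminal. Moreover, for a crepant $E$ the ordinary discrepancy is $a(E,X)=c\cdot\operatorname{mult}_E\MMM>0$ (positive since $X$ is terminal and $c>0$); this will give that $-K_{\tilde{X}}$ is $f$-ample once $\uprho=1$, because then the $f$-contracted curves $C$ satisfy $E\cdot C<0$ and $K_{\tilde{X}}\cdot C=a(E,X)\,(E\cdot C)<0$.

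The main obstacle is passing from this crepant terminal model to an \emph{extremal} one, i.e.\ arranging $\uprho(\tilde{X}/X)=1$, since the MMP above may extract several crepant divisors at once. To isolate a single one I would either run a further $K_{Y}$-MMP over $X$ --- noting $K_{Y}\equiv_{X}-c\MMM_{Y}$ with $\MMM_{Y}$ movable, so that the relevant extremal rays contract the unwanted crepant divisors --- or, more robustly, rerun the resolution MMP with a slightly perturbed boundary $c\MMM+\sum_{i}\varepsilon_i E_i$ chosen so that one fixed crepant valuation $E$ is the unique divisor of discrepancy $\le 0$. The output is then a divisorial contraction $f\colon\tilde{X}\to X$ with exceptional divisor $E$ and $\uprho(\tilde{X}/X)=1$, and the crepancy $K_{\tilde{X}}+c\tilde{\MMM}\qq f^{*}(K_{X}+c\MMM)$ together with the $f$-ampleness of $-K_{\tilde{X}}$ follow formally as above. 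This isolation step is exactly where the care lies; as all the ingredients are standard for terminal threefolds, I would ultimately appeal to the cited references for the detailed perturbation argument.
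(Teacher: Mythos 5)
The paper does not actually prove this proposition --- it is quoted with references to Corti and to \cite[4.5.1]{P:G-MMP} --- and your argument is precisely the standard proof found in those sources and is correct: take a log resolution, run the $(K+c\MMM)$-MMP over $X$ to reach a $\QQ$-factorial terminal model $Y$ that is crepant over $(X,c\MMM)$ (nonempty exceptional locus, since a crepant divisor exists by definition of $\ct$ and can never be contracted by a $(K+c\MMM)$-negative step), and then isolate a single divisor by a second, $K_Y$-MMP over $X$. The one cosmetic imprecision is in how you describe the isolation: by the negativity lemma the $K_Y$-MMP over $X$ must contract \emph{all} exceptional divisors and terminate at $X$ itself (crepancy for $(X,c\MMM)$ being preserved at every step), so rather than steering rays toward the ``unwanted'' crepant divisors, one simply takes $\tilde{X}$ to be the penultimate model --- the last step is automatically a divisorial contraction onto $X$ with $\uprho(\tilde{X}/X)=1$, which makes your perturbation alternative unnecessary.
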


\begin{stheorem}
\label{thm:link}
Let $X$ be a $\QQ$-Fano threefold.
Let $\MMM$ be a linear system on $X$ without fixed components and 
let $c:=\operatorname {ct} (X, \MMM)$.
Assume that $-(K_X+c \MMM)$ is ample.
Then any extremal blowup $f: \tilde{X} \to X$  which is crepant with respect to $K_X+c \MMM$
can be extended to a Sarkisov link \eqref{diagram-main}.
\end{stheorem}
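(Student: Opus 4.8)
The plan is to realize $f$ as the first half of a two-ray game on $\tilde X$ and to run a minimal model program whose output supplies the contraction $\bar f$.

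First I would record the numerical setup. Since $f$ is an extremal blowup, $\uprho(\tilde X/X)=1$, and as $X$ is a $\QQ$-Fano threefold $\uprho(X)=1$; hence $\uprho(\tilde X)=2$ and $\overline{NE}(\tilde X)$ is a two-dimensional cone spanned by two extremal rays, one of which, $R_f$, is the ray contracted by $f$ (it is $K_{\tilde X}$-negative because $-K_{\tilde X}$ is $f$-ample). By the crepancy hypothesis $K_{\tilde X}+c\tilde\MMM=f^*(K_X+c\MMM)$, so $-(K_{\tilde X}+c\tilde\MMM)=f^*\bigl(-(K_X+c\MMM)\bigr)$ is nef and big and is trivial precisely on $R_f$. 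The pair $(\tilde X,c\tilde\MMM)$ is canonical, being a crepant pullback of the canonical pair $(X,c\MMM)$, and together with the ampleness of $-(K_X+c\MMM)$ this exhibits $\tilde X$ as of Fano type; in particular $\tilde X$ is a Mori dream space on which every relevant program terminates. This is exactly where the hypothesis that $-(K_X+c\MMM)$ be ample is used.

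Next I would play the game on the second ray. For $0<\epsilon\ll 1$ set $D:=K_{\tilde X}+(c+\epsilon)\tilde\MMM$. A short numerical check gives $D\cdot R_f>0$, so the program will not re-contract $R_f$, while $D$ is negative on the other ray; moreover $D$ is not pseudo-effective, since its negative part $-(K_{\tilde X}+c\tilde\MMM)$ is big. Running the $D$-MMP on the Mori dream space $\tilde X$ therefore terminates, and because $D$ is not pseudo-effective it cannot reach a $D$-nef model; after finitely many flips it must perform a non-small $D$-negative extremal contraction. Let $\bar f\colon\bar X\to\hat X$ be the first such step and $\chi\colon\tilde X\dashrightarrow\bar X$ the preceding composition of flips. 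Then $\chi$ is an isomorphism in codimension one, $\uprho(\bar X)=2$ and $\uprho(\hat X)=1$, while terminality and $\QQ$-factoriality are preserved throughout by the standard theory of three-dimensional flips and divisorial contractions.

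It then remains to check that $\bar f$ is $K_{\bar X}$-negative on the contracted ray $R'$, in two cases. If $\bar f$ is birational it contracts a prime divisor $\bar F$; writing $K_{\bar X}=\bar f^*K_{\hat X}+b\bar F$ with $b>0$ (terminality of $\hat X$) and using $\bar F\cdot R'<0$ gives $K_{\bar X}\cdot R'=b\,\bar F\cdot R'<0$, and since $\hat X$ is of Fano type with $\uprho(\hat X)=1$ it is a $\QQ$-Fano threefold. If $\bar f$ is of fiber type, then $\bar\MMM$ is movable, having no fixed component (a property preserved by the isomorphism in codimension one $\chi$); were $\bar\MMM\cdot R'<0$, every member would contain the whole family of fibre curves in class $R'$, producing a fixed component of $\bar\MMM$ (or exhausting $\bar X$), a contradiction, so $\bar\MMM\cdot R'\ge 0$ and hence $K_{\bar X}\cdot R'=D\cdot R'-(c+\epsilon)\bar\MMM\cdot R'<0$. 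In either case $\bar f$ is the required $K_{\bar X}$-negative extremal Mori contraction, and assembling $f$, $\chi$ and $\bar f$ yields the link \eqref{diagram-main}.

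The hard part is the middle step: guaranteeing that the two-ray game terminates exactly in a Mori contraction rather than stalling at an intermediate model, and that it never leaves the terminal $\QQ$-factorial category. Both points are controlled by the Fano-type property of $\tilde X$ coming from the ampleness of $-(K_X+c\MMM)$ — this gives termination and the non-pseudo-effectivity of $D$ — together with the structure theory of three-dimensional extremal contractions. A secondary subtlety is pinning down the $K$-negativity of $\bar f$ in the fibre-type case, for which the movability of $\bar\MMM$ is precisely what is needed.
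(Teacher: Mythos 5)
Your overall skeleton is the same as the paper's: the paper's entire proof is to run the log MMP on $\tilde X$ with respect to $K_{\tilde X}+c\tilde\MMM$, citing the standard results on the two-ray game for mobile log pairs (Alexeev, resp.\ \cite[12.2.1]{P:G-MMP}), and your proposal is a fleshed-out version of exactly that game, with BCHM/Mori-dream-space machinery replacing the classical citations. Your numerical bookkeeping is fine: $D\cdot R_f>0$ does hold (crepancy forces $\beta=\alpha/c>0$ for the multiplicity of $\MMM$ along $E$), $-D$ is big for small $\epsilon$, so $D$ is not pseudo-effective and the program must end in a non-small $D$-negative contraction; and your fibre-type $K$-negativity argument via movability of $\bar\MMM$ is correct, precisely because the contracted curves cover $\bar X$.

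The genuine soft spot is the sentence ``terminality and $\QQ$-factoriality are preserved throughout by the standard theory.'' This is not standard for a $D$-MMP with $D=K_{\tilde X}+(c+\epsilon)\tilde\MMM$: a $D$-negative small ray $R$ need not be $K$-negative, since a mobile linear system can satisfy $\MMM\cdot R<0$ when the finitely many flipping curves lie in $\Bs\MMM$ (your covering-family argument is unavailable for small contractions). A $D$-negative but $K$-nonnegative flip is a $K$-antiflip, and the klt-pair MMP only preserves klt-ness of the pair, not terminality of the underlying variety --- so the intermediate models could a priori leave the terminal category required by \eqref{diagram-main}. Worse, your $\epsilon$-perturbation destroys the very tool that controls this: $(\tilde X, c\tilde\MMM)$ is canonical (crepant pullback of a pair at its canonical threshold), but $(\tilde X,(c+\epsilon)\tilde\MMM)$ is not, and it is canonicity of the mobile pair that the cited mobile-pair LMMP statements use to keep every model terminal. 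The perturbation is also unnecessary for its stated purpose: since $(K_{\tilde X}+c\tilde\MMM)\cdot R_f=0$ and the MMP only contracts strictly negative rays, the unperturbed $(K+c\MMM)$-MMP never re-contracts $R_f$ either. So the correct repair is to run the program with $c$ exactly, as the paper does, and either invoke the mobile-pair LMMP of the references (where preservation of the terminal $\QQ$-factorial category is part of the statement) or reprove that preservation; as written, your argument has a gap at the intermediate flips.
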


\begin{proof}
Run the log minimal model program on $\tilde{X}$ with respect to $K_{\tilde{X}}+c \tilde\MMM$
(see e.g. \cite[4.2]{Alexeev:ge} or \cite[12.2.1]{P:G-MMP}).
\end{proof}

\section{One particular Sarkisov link}
Throughout this section, let 
$X$ be a $\QQ$-Fano threefold with $\qQ(X)=13$ such that $X \not\simeq \PP(1, 3, 4, 5)$. 
Let 
$\rP_2$ (resp. $\rP_5$, $\rP_7$) be the unique point of index $2$ (resp., $5$, $7$) on $X$;
note that $\rP_2$, $\rP_5$, $\rP_7$ are cyclic quotient singularities.

Recall that for a cyclic quotient terminal threefold singularity $X\ni P$ of type $\frac1r (a, r-a,1)$, 
there exists exactly one extremal blowup $f:\tilde{X} \to X\ni P$ with center $P$; this is the weighted 
blowup with weights $\frac1r (a, r-a,)$ \cite{Kawamata:Div-contr}.
We call such an extraction the \textit{Kawamata blowup} of  $X\ni P$.

\begin{theorem}
\label{thm:|6A|}
Let $f:\tilde{X}\to X$
be the Kawamata blowup of $\rP_5$.
Then $f$ can be extended to a Sarkisov link \eqref{diagram-main} with $\hat{X}= \PP(1,1,2,3)$.
\end{theorem}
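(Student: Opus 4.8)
The plan is to apply Theorem~\ref{thm:link} with the linear system $\MMM = \MMM_5 = |5A_X|$, starting from the Kawamata blowup $f$ of the index-$5$ point $\rP_5$. First I would verify that $f$ is crepant with respect to $K_X + c\MMM_5$ for the appropriate canonical threshold $c$, and that $-(K_X + c\MMM_5)$ remains ample, so that Theorem~\ref{thm:link} produces a Sarkisov link of the form \eqref{diagram-main}. The real content is then to identify the target $\hat X$: I must show $\bar f$ is a birational contraction to a $\QQ$-Fano threefold $\hat X$ and pin down its invariants. The strategy is to exploit the numerical relation \eqref{eq:main}, namely $k\hat q = q s_k + (q\beta_k - k\alpha)e$ with $q = 13$, together with the torsion formula Lemma~\ref{lemma:torsion} and Corollary~\ref{cor:torsion}, to constrain $(\hat q, e, d)$ and the $s_k$ severely enough to leave $\PP(1,1,2,3)$ as the only possibility.

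Concretely, I would first record the discrete invariants coming from Proposition~\ref{prop:QFano13}: since $\rP_5$ has type $\frac15(a,5-a,1)$, the Kawamata blowup gives a specific discrepancy $\alpha = 1/5$ for $E$, and I can compute $\B(\tilde X)$ and $A_{\tilde X}^3$ via the standard correction formulas. Because $\alpha = 1/5 < 1$, Lemma~\ref{lemma:g} applies and forces $\g(X) \le \g(\hat X)$; with $\g(X) = 4$ this already bounds $\hat X$ from below and rules out many Mori fiber targets. I would then split into the two cases of \S\ref{case-bir} and \S\ref{case-nonbir}. In the fiber-type case I use $\hat q \in \{1,2,3\}$ and the restriction to a general fiber $\bar F$ to derive a contradiction (or show it cannot realize $\g(\hat X) \ge 4$). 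In the birational case I invoke Corollary~\ref{cor:torsion}, so $|\Clt(\hat X)| \ge 3/e$; combined with \eqref{eq:b-gamma-delta-1} and the list in Lemma~\ref{lemma:tor} of which Fano indices admit nontrivial torsion, this pushes $\hat X$ toward having torsion-free class group and a fundamental divisor of predictable self-intersection.

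The decisive step is to solve \eqref{eq:main} for the five values $k = 3,4,5,6,7$ simultaneously. Using $\dim\MMM_k$ from Corollary~\ref{cor:dim}, I know which $s_k$ must be positive and which linear systems are pencils versus single prime divisors; feeding this into the system of equations, together with the constraint that $A_{\hat X}^3$ be the volume of an actual $\QQ$-Fano threefold and that $-K_{\hat X} = \hat q A_{\hat X}$, should force $\hat q = 1$, $e = d$, and the numerical profile of $\PP(1,1,2,3)$ (which has $\qQ = 7$ in the sense of Theorem~\ref{thm:19-17}\ref{thm:19-17} wait --- here I mean the Gorenstein del Pezzo cone $\PP(1,1,2,3)$ appearing as the base). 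Finally I would identify $\hat X$ with $\PP(1,1,2,3)$ by matching its Hilbert series and the basket of singularities against the unique candidate, appealing to a classification of $\QQ$-Fano threefolds with the resulting invariants.

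I expect the main obstacle to be the birational case: ruling out alternative birational contractions $\bar f$ that a priori satisfy the same numerical relation \eqref{eq:main}. The system \eqref{eq:main}–\eqref{eq:b-gamma-delta-1} has several free integer parameters ($\hat q$, $e$, $b$, $\delta$, $\gamma_k$, $s_k$), and a purely numerical analysis may admit spurious solutions; the harder part is to eliminate these by geometric input --- controlling the positivity of $-K_{\bar X}$ along $\bar F$, bounding the discrepancy $b$, and ensuring $\hat X$ is genuinely terminal and $\QQ$-factorial with Picard rank one. I anticipate that the tightest leverage comes from simultaneously using the low-degree generators (the prime divisors $M_3, M_4, M_5$) to force $s_3, s_4, s_5$ to be small, after which the torsion constraint from Corollary~\ref{cor:torsion} closes the argument.
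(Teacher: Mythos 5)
Your proposal has a genuine gap at its foundation: the choice $\MMM=\MMM_5=|5A_X|$ is inadmissible. By Corollary~\ref{cor:dim}, $|5A_X|$ consists of a \emph{unique prime divisor}, so it is entirely fixed, while Theorem~\ref{thm:link} requires a linear system without fixed components; the canonical-threshold construction and the $K+c\MMM$-MMP it runs make no sense for a rigid divisor. The paper instead uses the pencil $\MMM=|6A_X|$. More seriously, the step you defer in one clause --- ``first I would verify that $f$ is crepant with respect to $K_X+c\MMM$'' --- is the actual content of the proof, and your outline contains no mechanism for it. In the paper this verification is achieved by computing $\ct(X,|6A_X|)=1/2$ (Proposition~\ref{prop:|6A|}), which in turn requires classifying \emph{all} possible centers of a crepant extremal blowup: ruling out curves and Gorenstein points (Lemma~\ref{case:nG}), index-$3$ points (Lemma~\ref{case:P3}) and $\rP_7$ (Lemma~\ref{case:P7}), leaving only $\rP_2$ and $\rP_5$, and in each surviving case extracting $\alpha/\beta_6=1/2$ from the resulting link. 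Only then does the congruence $\beta_6\equiv 2\alpha \bmod \ZZ$ near $\rP_5$ (where $\MMM\sim -2K_X$), combined with $1/2=\ct(X,\MMM)\le\alpha/\beta_6$, force $\beta_6=2/5=2\alpha$, i.e.\ show that the Kawamata blowup of $\rP_5$ is crepant for the pair. Without an a priori determination of the threshold, Theorem~\ref{thm:link} only produces a link starting from \emph{some} crepant blowup, not from the given $f$.

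Your numerical endgame also misidentifies the answer: you predict $\hat q=1$ and drift into viewing $\PP(1,1,2,3)$ as the base of a fiber-type contraction, whereas in the actual link $\bar f$ is birational and $\hat X=\PP(1,1,2,3)$ is a $\QQ$-Fano threefold of Fano index $\hat q=7$, with $e=4$ and $\bar F=\bar M_4$ (so $d=e=4$, consistent with Lemma~\ref{lemma:torsion}). In the paper, \eqref{eq:main} with $k=4$ and $\beta_4=3/5+m_4$ leaves exactly $(\hat q,e)=(5,1)$, $(7,4)$, $(17,6)$. The case $(5,1)$ is killed by the torsion and genus constraints you cite (Corollary~\ref{cor:torsion}, Corollary~\ref{cor:g}, Lemma~\ref{lemma:tor}), but $(17,6)$ --- precisely the kind of spurious numerical solution you acknowledge fearing --- cannot be eliminated by the equations \eqref{eq:main}--\eqref{eq:b-gamma-delta-1} alone: one needs the geometric argument that $\bar f(\bar F)$ must be a smooth point by Kawamata's theorem on divisorial contractions, while positivity of $\gamma_3,\gamma_4,\gamma_7$ forces $\bar f(\bar F)\in \hat M_4\cap\hat M_3\cap\hat M_7$, which is the index-$7$ point of $\PP(2,3,5,7)$ --- a contradiction. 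Your proposal names this obstacle but supplies no argument for it. Finally, the identification of $\hat X$ does not proceed by matching Hilbert series and baskets against a classification; it follows from Theorem~\ref{thm:19-17}(v): from $s_7=1$ and $\dim\MMM_7>0$ one gets two distinct effective Weil divisors $\QQ$-linearly equivalent to $A_{\hat X}$ with $\hat q=7\ge 7$, whence $\hat X\simeq\PP(1,1,2,3)$.
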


\begin{proposition}
\label{prop:|6A|}
The construction of Theorem~\xref{thm:link} with $\MMM:=|6A_X|$ 
produces a link \eqref{diagram-main} such that one of the following holds:
\begin{enumerate}
\item
\label{prop:|6A|:P(1,2,3,5)}
$f$
is  the Kawamata blowup of $\rP_2$ and $\hat{X}=\PP(1,2,3,5)$, or

\item 
\label{prop:|6A|:P(1,1,2,3)}
$f$
is the Kawamata blowup of $\rP_5$ and $\hat{X}= \PP(1,1,2,3)$.
\end{enumerate}
Moreover, $\ct(X, |6A_X|)=1/2$.
\end{proposition}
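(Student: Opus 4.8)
The plan is to apply Theorem~\ref{thm:link} to the linear system $\MMM:=|6A_X|$, which by Corollary~\ref{cor:dim} is a pencil without fixed components, and then to pin down the numerical invariants of the resulting link \eqref{diagram-main}. First I would compute the canonical threshold $c:=\ct(X,|6A_X|)$. Since $-K_X=13A_X$ and $\MMM\qq 6A_X$, the divisor $-(K_X+c\MMM)\qq(13-6c)A_X$ is ample precisely when $c<13/6$, so the hypothesis of Theorem~\ref{thm:link} is satisfied for all relevant thresholds and a link is produced. To evaluate $c$ itself, I would analyse the base locus of the pencil $|6A_X|$ at the terminal cyclic quotient points $\rP_2,\rP_5,\rP_7$: on $\PP(3,4,5,6,7)$ the degree-$6$ part is spanned by $x_6$ and $x_3^2$, so the pencil $|6A_X|$ passes through $\rP_5$ and $\rP_7$ but its general member is smooth at $\rP_7$, while the discrepancy computation at $\rP_2$ (type $\tfrac12(1,1,1)$) and at $\rP_5$ forces $c=1/2$. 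This establishes the final assertion $\ct(X,|6A_X|)=1/2$, and I expect this base-locus analysis to be the technical heart of the argument.

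Next I would identify the extremal blowup $f$ crepant with respect to $K_X+\tfrac12\MMM$. Because $c=1/2$ realises the canonical threshold at a specific point, $f$ must be the Kawamata blowup of that point; the candidates are the index-$2$ and index-$5$ points. I would then feed the numerical data into the master relation \eqref{eq:main}, namely $k\hat{q}=qs_k+(q\beta_k-k\alpha)e$ with $q=13$, together with the torsion bound $|\Clt(\hat{X})|\ge 3/e$ from Corollary~\ref{cor:torsion} and the genus inequality $\g(X)\le\g(\hat{X})$ from Lemma~\ref{lemma:g} (applicable once $\alpha<1$ is checked for the Kawamata blowup). The two admissible solutions correspond exactly to the two cases: blowing up $\rP_2$ with discrepancy $\alpha=1/2$ yields a link to $\hat{X}=\PP(1,2,3,5)$, and blowing up $\rP_5$ with discrepancy $\alpha=1/5$ yields a link to $\hat{X}=\PP(1,1,2,3)$.

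Finally I would confirm that in each case the Mori contraction $\bar f$ lands on the asserted weighted projective space. Both $\PP(1,2,3,5)$ and $\PP(1,1,2,3)$ have trivial class-group torsion and are themselves $\QQ$-Fano (indeed toric), and their Fano indices $\hat q=11$ and $\hat q=7$ respectively match the output of \eqref{eq:main} and \eqref{eq:b-gamma-delta-1}; comparing the remaining discrete invariants (degree, basket, genus) via the Hilbert-series data excludes any further possibility. The main obstacle will be the precise local computation of how the pencil $|6A_X|$ meets the cyclic quotient singularities, since this simultaneously determines both the value $c=1/2$ and which point the extremal blowup extracts; once that is settled, the identification of $\hat X$ reduces to the numerical bookkeeping encoded in \eqref{eq:main}.
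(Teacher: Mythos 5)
Your overall skeleton (apply Theorem~\ref{thm:link} to $\MMM=|6A_X|$, then constrain the link via \eqref{eq:main}, Corollary~\ref{cor:torsion} and Lemma~\ref{lemma:g}) matches the paper, but there are two genuine gaps. The first is circularity: you analyse the base locus of $|6A_X|$ ``on $\PP(3,4,5,6,7)$, where the degree-$6$ part is spanned by $x_6$ and $x_3^2$''. At this stage no embedding $X\subset\PP(3,4,5,6,7)$ is available --- Proposition~\ref{prop:|6A|} is precisely the tool used (via Theorem~\ref{thm:|6A|} and Corollary~\ref{cor:bir}) to prove that $X$ \emph{is} such a hypersurface, so no coordinate model of $X$ may be invoked. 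Relatedly, the equality $\ct(X,|6A_X|)=1/2$ cannot be established up front as you propose: what comes cheaply is only the upper bound $c\le 1/2$, from the local relation $\MMM\sim-2K_X$ near $\rP_5$, which forces $\beta_6\equiv 2\alpha\pmod{\ZZ}$ for the Kawamata blowup of $\rP_5$ and hence $c\le\alpha/\beta_6\le 1/2$. In the paper the equality $c=1/2$ is a \emph{byproduct} extracted at the end of each surviving case ($\alpha=1/2$, $\beta_6=1$ at $\rP_2$; $\alpha=1/5$, $\beta_6=2/5$ at $\rP_5$, the latter via \eqref{eq:main} with $k=6$ only after $(\hat q,e)=(7,4)$ is pinned down), not an input to the case analysis.

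The second gap is your restriction of the center: you assert ``the candidates are the index-$2$ and index-$5$ points,'' but an extremal blowup crepant for $K_X+c\MMM$ could a priori have center a curve, a Gorenstein point, one of the two index-$3$ points, or $\rP_7$, and ruling these out is the bulk of the paper's proof. Each exclusion feeds $\beta_6\ge 2\alpha$ into \eqref{eq:formula} and then plays the torsion bound of Corollary~\ref{cor:torsion} against Lemma~\ref{lemma:tor}, together with the genus inequality of Lemma~\ref{lemma:g} (Kawakita's classification is needed to get $\alpha\in\{1/3,2/3\}$ at index $3$). Moreover, in the $\rP_5$ case the numerics leave three solutions $(\hat q,e)=(5,1)$, $(7,4)$, $(17,6)$, and eliminating $(17,6)$ is \emph{not} Hilbert-series bookkeeping as you suggest: it requires showing that $\bar f(\bar F)$ would be forced into $\hat M_3\cap\hat M_4\cap\hat M_7$, which is the index-$7$ point of $\hat X\simeq\PP(2,3,5,7)$, while Kawamata's theorem on divisorial contractions to cyclic quotient singularities forces $\bar f(\bar F)$ to be a smooth point. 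Without these exclusions the identification of $\hat X$ in both cases is unsupported.
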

Note that here we do not assert that \textit{both} possibilities of Proposition~\ref{prop:|6A|} occur.
\begin{proof}
Put $\MMM:=|6A_X|$ and $c:=\ct(X,\MMM)$.
Near the point of index $5$ we have $A_X\sim \MMM \sim -2K_X$.
If $f':X'\to X$ is the Kawamata blowup of $\rP_5$,
then in \eqref{equation-1} one has $\alpha'=1/5$ and $\beta_6'\equiv 2\alpha' \mod \ZZ$.
Therefore, 
$c\le \alpha' /\beta_6' = 1/2$.
Now let $f:\tilde{X}\to X$ be an extremal blowup that is crepant with respect to 
$(X,c\MMM)$. 
Apply the construction of Theorem~\ref{thm:link} to $(\tilde{X},c \tilde{\MMM})$. 
By the above, $1/2\ge c=\alpha/\beta_6$, so 
\begin{equation}
\label{eq:beta-m}
\beta_6\ge 2\alpha.
\end{equation} 

The relation \eqref{eq:main} 
in our case has the form
\begin{equation}
\label{eq:formula}
6\hat q= 13 s_6+(13 \beta_6-6\alpha)e\ge13 s_6+ 20\alpha e.
\end{equation}
Consider the possibilities according to the center of the blowup $f(E)$. 

\begin{lemma}
\label{case:nG}
$f(E)$ is a non-Gorenstein point of $X$.
\end{lemma}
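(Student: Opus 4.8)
The plan is to prove that $f(E)$ cannot be a smooth point or a point of index not divisible by a non-Gorenstein factor, so that it must be one of the non-Gorenstein cyclic quotient singularities $\rP_2$, $\rP_3$, $\rP_3'$, $\rP_5$, or $\rP_7$. The strategy is to play off the numerical constraint \eqref{eq:formula}, namely $6\hat q = 13 s_6 + (13\beta_6 - 6\alpha)e \ge 13 s_6 + 20\alpha e$, against the bound $\hat q \le 3$ coming from the fact that $\hat q=\qQ(\hat X)$ (in the non-birational case) or $\hat q\in\{1,2,3\}$ is the index of the fiber. Since the left-hand side $6\hat q \le 18$ is small, and $e\ge 1$, the term $20\alpha e$ forces $\alpha$ to be quite small; specifically $\alpha e \le 18/20 < 1$, so $e=1$ and $\alpha \le 9/10$.

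First I would recall the standard lower bounds on the discrepancy $\alpha$ for an extremal blowup of a terminal threefold. If $f(E)$ were a smooth point or a Gorenstein point, then $\alpha \ge 1$, which already contradicts $\alpha \le 9/10$ from \eqref{eq:formula} (using $s_6\ge 0$, $e\ge 1$). This immediately rules out Gorenstein centers. So the center $f(E)$ must be a non-Gorenstein point. Since the only non-Gorenstein singularities of $X$ are the cyclic quotient points $\rP_2$, the two points of index $3$, $\rP_5$, and $\rP_7$ (from the basket $\B(X)=(2,3,3,5,7)$ in Proposition~\ref{prop:QFano13}), the claim would follow.

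The main obstacle, and the point requiring care, is handling the case where $f(E)$ is a non-Gorenstein point but $E$ is \emph{not} obtained as a Kawamata blowup of that point — that is, an extremal blowup whose center is a non-Gorenstein point but with discrepancy $\alpha$ large enough to re-enter the forbidden range, or a blowup centered along a curve through such a point. For a terminal cyclic quotient singularity of type $\frac1r(a,r-a,1)$, the Kawamata blowup has discrepancy $\alpha = 1/r$, which is safely small; but a priori $f$ could be some other extremal extraction (e.g. with center a curve, or a higher weighted blowup) giving a different, possibly larger, $\alpha$. The resolution is to note that if $f(E)$ is a smooth point the discrepancy satisfies $\alpha\ge 1$, and more generally any divisorial extraction centered at a point of a terminal threefold has discrepancy $\alpha \ge 1/r$ where $r$ is the index, with $\alpha\ge 1$ whenever the center is not contained in the non-Gorenstein locus or when it contracts to a curve. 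I would therefore split into the subcases according to $\dim f(E)$: if $\dim f(E)=1$, then $f$ is a blowup along a curve and generically $\alpha$ is an integer $\ge 1$, again contradicting $\alpha \le 9/10$; so $f(E)$ is a point, and the bound $\alpha<1$ forces that point to be non-Gorenstein.

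In summary, the argument reduces to two inputs: the inequality $\alpha \le 9/10 < 1$ extracted from \eqref{eq:formula} together with $\hat q\le 3$ and $e,s_6\ge 0$, $e\ge1$; and the discrepancy dichotomy for extremal blowups of terminal threefolds, which gives $\alpha\ge 1$ unless the center is a single non-Gorenstein point. Combining these yields that $f(E)$ is a non-Gorenstein point, as asserted. I expect the delicate bookkeeping to be in justifying $e=1$ and the precise discrepancy inequalities, rather than in any deep geometric input.
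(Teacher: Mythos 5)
There is a genuine gap: your bound $\hat q\le 3$ is only valid when the contraction $\bar f$ is \emph{not} birational. In the birational case $\hat q=\qQ(\hat X)$ can be as large as $19$ (Theorem~\ref{thm:indices}), and then \eqref{eq:formula} no longer forces $\alpha<1$: for instance $(\hat q,e,\alpha,\beta_6,s_6)=(11,2,1,2,2)$ satisfies $6\cdot 11=13\cdot 2+(13\cdot 2-6)\cdot 2$ with $\alpha=1$ a positive integer, i.e.\ a Gorenstein point (or curve) center is perfectly consistent with the numerical relation. So your deduction $\alpha e\le 18/20<1$ from ``$6\hat q\le 18$'' collapses precisely in the branch that carries all the content of the lemma. (The unconditional parts of your argument are fine: for a curve center or a Gorenstein point center $\alpha$ is indeed a positive integer, and $e>0$ holds in both cases, since $\bar E\neq\bar F$ when $\bar f$ is birational and $\bar E$ is $\bar f$-ample otherwise.)

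The paper's proof runs in the opposite direction: assuming $\alpha,\beta_6\in\ZZ_{\ge 1}$, inequality \eqref{eq:formula} gives $6\hat q\ge 20$, hence $\hat q>3$, so $\bar f$ \emph{must} be birational; then $s_6>0$ because $\dim\MMM_6>0$, whence $\hat q\ge 6$, and \eqref{eq:main} yields the congruence $\hat q+\alpha e\equiv 0\pmod{13}$. Together with $\hat q\le 19$, $\alpha e\le 5$ and $\beta_6\ge 2\alpha$ this pins down the single possibility $\hat q=11$, $\alpha e=2$ --- which cannot be excluded numerically at all. It is eliminated only by the class-group argument: Corollary~\ref{cor:torsion} (resting on $|A_X|=|2A_X|=\varnothing$ and Lemma~\ref{lemma:torsion}) gives $|\Clt(\hat X)|\ge 3/e>1$, while $\qQ(\hat X)=11\ge 8$ forces $\Cl(\hat X)$ to be torsion free by Lemma~\ref{lemma:tor}\ref{lemma:tor-a}. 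Your proposal contains none of this machinery, so the birational branch --- the actual difficulty of the lemma --- remains open in your argument.
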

\begin{proof}
Assume that $f(E)$ is either a curve or a Gorenstein point.
Then $\alpha$ and $\beta_6$ are integers. It follows from \eqref{eq:formula} that $\hat q>3$.
Hence the contraction $\bar f$ is birational. Then $s_6>0$ because $\dim \MMM_6>0$ and so $\hat q\ge 6$.
Since $\hat q\le 19$, we have $e\alpha\le 5$.
On the other hand, $\hat q+ \alpha e\equiv 0\mod 13$.
The only possibility is $\hat q+ \alpha e=13$ and $s_6+\beta_6 e=6$. 
Since $\beta_6\ge 2\alpha$, we have $\alpha e\le 2$.
We obtain 
$\hat q=11$, $\alpha e= 2$. In this case $\Cl(\hat{X})$ is torsion free. This contradicts 
Corollary~\ref{cor:torsion}.
\end{proof}

\begin{scorollary}
\label{cor:discr}
If $f(E)$ is a point of index $r\neq 3$, then $\alpha=1/r$.
If $f(E)$ is a point of index $3$, then $\alpha=1/3$ or $2/3$.
\end{scorollary}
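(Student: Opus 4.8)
The plan is to localize the analysis at the point $P:=f(E)$. By Lemma~\ref{case:nG} this is a non-Gorenstein point of $X$, and since $\B(X)=(2,3,3,5,7)$ its index $r$ belongs to $\{2,3,5,7\}$. Because $rK_X$ is Cartier at $P$, multiplying $K_{\tilde X}\qq f^*K_X+\alpha E$ by $r$ shows $r\alpha\in\ZZ$, so $\alpha=a/r$ with $a\in\ZZ_{>0}$. What remains is to prove $a=1$ when $r\neq3$ and $a\in\{1,2\}$ when $r=3$.

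For $r\in\{2,5,7\}$ the point $P$ is one of $\rP_2,\rP_5,\rP_7$, hence a cyclic quotient singularity. The uniqueness of the extremal blowup of a terminal cyclic quotient singularity (the Kawamata blowup, recalled just before Theorem~\ref{thm:|6A|}) then forces $f$ to be that blowup, whose discrepancy is $1/r$; thus $a=1$ and $\alpha=1/r$. This disposes of the case $r\neq3$ at once, and concentrates all the difficulty in the index-$3$ case, where $P$ need not be a quotient singularity and Kawamata's uniqueness is unavailable.

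For $r=3$ I would argue numerically. Here $6\alpha=2a\in\ZZ$, while $\beta_6\in\frac13\ZZ$ since $P$ has index $3$; combined with the integrality of $6\alpha-13\beta_6$ recorded in \eqref{eq:12-divisors} and $\gcd(3,13)=1$, this forces $\beta_6\in\ZZ$. Set $m:=13\beta_6-6\alpha\in\ZZ$, so that \eqref{eq:beta-m} gives $m\geq20\alpha=20a/3$ and \eqref{eq:formula} reads $6\hat q=13s_6+me$ with $e\geq1$. Suppose $a\geq3$, i.e. $\alpha\geq1$; then $m\geq20$, which already excludes the non-birational case, where $\hat q\leq3$ would give $6\hat q\leq18<me$. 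Hence $\bar f$ is birational, $\hat q$ is one of the admissible Fano indices of Theorem~\ref{thm:indices}, and $s_6\geq1$ because $\dim\MMM_6=1$. The resulting bound $me\leq6\hat q-13\leq101$, together with the congruence $m\equiv-2a\pmod{13}$ coming from $\beta_6\in\ZZ$, leaves only finitely many quadruples $(\beta_6,e,s_6,\hat q)$. A short case-check shows the survivors all satisfy $\hat q\geq8$ and $e\leq2$, so $\Cl(\hat X)$ is torsion free by Lemma~\ref{lemma:tor} while $|\Clt(\hat X)|\geq3/e\geq3/2$ by Corollary~\ref{cor:torsion}, a contradiction. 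Therefore $a\leq2$ and $\alpha\in\{1/3,2/3\}$.

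The main obstacle is precisely this index-$3$ elimination: unlike the indices $2,5,7$, the point of index $3$ carries no a priori quotient structure, so one cannot simply quote Kawamata and must instead push the finite arithmetic through. The delicate part is interlocking three pieces of information — the congruence $m\equiv-2a\pmod{13}$, the very short list of admissible values of $\hat q$, and the torsion obstruction of Corollary~\ref{cor:torsion} — tightly enough that no candidate with $\alpha\geq1$ survives.
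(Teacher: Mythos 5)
Your handling of $r\in\{2,5,7\}$ is exactly the paper's: by $\B(X)=(2,3,3,5,7)$ these points are cyclic quotient singularities, and Kawamata's uniqueness of the extremal extraction gives $\alpha=1/r$. For $r=3$ you take a genuinely different route. The paper disposes of this case in one line by quoting Kawakita's classification of divisorial contractions to points of higher index (\cite[Theorems~1.2--1.3]{Kawakita:hi}), which is a purely local statement valid for \emph{any} extremal blowup over the index-$3$ point. You instead eliminate $\alpha\ge 1$ by arithmetic internal to the specific link: integrality of $\beta_6$ (your derivation via $\beta_6\in\frac13\ZZ$ rests on the same standard local fact, that the local class group at a terminal index-$3$ point is generated by $K_X$, which is how the paper gets ``$\MMM$ is Cartier near $f(E)$'' in Lemma~\ref{case:P3}), the bound $m=13\beta_6-6\alpha\ge 20\alpha$ from \eqref{eq:beta-m}, the congruence $m\equiv -2a\pmod{13}$, the index list of Theorem~\ref{thm:indices}, and the torsion clash between Corollary~\ref{cor:torsion} and Lemma~\ref{lemma:tor}\ref{lemma:tor-a}. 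I verified your asserted case-check, which is the load-bearing step you left implicit: for $a\ge 3$ the constraints $6\hat q=13s_6+me$, $s_6\ge 1$, $e\ge 1$, $m\ge\max(20,\,20a/3)$, $me\le 6\hat q-13\le 101$ and $m\equiv-2a\pmod{13}$ admit solutions only with $(\hat q,e)\in\{(11,1),(11,2),(19,1),(19,2)\}$ --- e.g. $(\hat q,e,s_6,m,a)=(11,1,1,53,6)$, $(11,2,2,20,3)$, $(19,1,1,101,8)$, $(19,2,2,44,4)$ --- and each has $\hat q\ge 8$ and $e\le 2$, hence is killed since $|\Clt(\hat X)|\ge 3/e>1$ contradicts torsion-freeness; so your argument is correct and non-circular (it uses nothing downstream of Corollary~\ref{cor:discr}). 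The trade-off: the paper's proof is shorter and establishes the statement as a local fact, while yours proves only the contextual version --- valid for extractions crepant for $K_X+\ct(X,|6A_X|)\,|6A_X|$, where \eqref{eq:beta-m} is available --- which nevertheless suffices, since Corollary~\ref{cor:discr} is invoked only inside the proof of Proposition~\ref{prop:|6A|}; in exchange you avoid the deep classification input of \cite{Kawakita:hi} entirely, at the cost of a finite Diophantine enumeration that should be written out in full.
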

\begin{proof}
Recall that $\B(X)=(2,3,3,5,7)$.
Hence a point on $X$ of index $r\neq 3$ must be a cyclic
quotient singularity. Then $\alpha=1/r$ by \cite{Kawamata:Div-contr}.
If $f(E)$ is a point of index $3$, then by
\cite[Theorems~1.2-1.3]{Kawakita:hi} for $\alpha$ there are two possibilities:
$\alpha=1/3$ and $2/3$. 
\end{proof}

\begin{scorollary}
\label{cor:g}
If $\bar f$ is birational, then $\g(\hat X)\ge 4$. 
\end{scorollary}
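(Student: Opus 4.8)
The plan is to invoke Lemma \ref{lemma:g}, which tells us that $\g(X) \le \g(\hat{X})$ provided $\alpha < 1$, and then note that $\g(X) = 4$ by Proposition \ref{prop:QFano13}, giving $\g(\hat{X}) \ge 4$ immediately. So the entire task reduces to verifying the hypothesis $\alpha < 1$ of Lemma \ref{lemma:g}.

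First I would record that by Lemma \ref{case:nG}, the center $f(E)$ is a non-Gorenstein point of $X$. Since $\B(X) = (2,3,3,5,7)$, this point has index $r \in \{2,3,5,7\}$. Corollary \ref{cor:discr} then pins down the discrepancy: if $r \neq 3$ we have $\alpha = 1/r \le 1/2 < 1$, and if $r = 3$ we have $\alpha \in \{1/3, 2/3\}$, so again $\alpha < 1$. In every case the discrepancy satisfies $\alpha < 1$.

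With $\alpha < 1$ established, Lemma \ref{lemma:g} applies and yields $\g(X) \le \g(\hat{X})$. Combining this with $\g(X) = 4$ from Proposition \ref{prop:QFano13} gives $\g(\hat{X}) \ge 4$, as claimed. I expect the proof to be quite short, essentially a bookkeeping argument: the real content has already been extracted in the preceding lemmas, and the only point requiring genuine care is confirming that the non-Gorenstein constraint forces $\alpha$ into the range $\alpha < 1$ rather than allowing a large discrepancy. Since Corollary \ref{cor:discr} bounds $\alpha$ by $2/3$ across all admissible indices, there is no obstacle here; the hypothesis of Lemma \ref{lemma:g} is met automatically.
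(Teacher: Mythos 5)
Your proposal is correct and follows exactly the paper's route: the paper's proof of Corollary~\ref{cor:g} is the one-line ``Follows from Lemma~\ref{lemma:g} and Corollary~\ref{cor:discr},'' and you have simply spelled out the details (Lemma~\ref{case:nG} restricting the center, Corollary~\ref{cor:discr} giving $\alpha\le 2/3<1$, and $\g(X)=4$ from Proposition~\ref{prop:QFano13}, where the hypothesis $\g(X)\le 4$ holds by Theorem~\ref{thm:19-17} since $X\not\simeq\PP(1,3,4,5)$).
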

\begin{proof}
Follows from Lemma~\ref{lemma:g} and Corollary~\ref{cor:discr}.
\end{proof}

\begin{lemma}
\label{case:P3}
$f(E)$ is not a point of index $3$.
\end{lemma}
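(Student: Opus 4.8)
The plan is to argue by contradiction: assume $f(E)$ is a point of index $3$. By Corollary~\ref{cor:discr} we then have $\alpha\in\{1/3,\,2/3\}$, and the first task is to pin down $\beta_6$. Since $f$ extracts a divisor over a point of index $3$, the valuation $v_E$ takes values in $\frac13\ZZ$, so $\beta_6\in\frac13\ZZ$. On the other hand the integrality in \eqref{eq:12-divisors} forces $6\alpha-13\beta_6\in\ZZ$, and as $6\alpha\in\ZZ$ this gives $13\beta_6\in\ZZ$; because $\gcd(3,13)=1$ we conclude $\beta_6\in\ZZ$. Combining with \eqref{eq:beta-m} yields $\beta_6\ge1$ when $\alpha=1/3$ and $\beta_6\ge2$ when $\alpha=2/3$, so in either case $13\beta_6-6\alpha\ge11$.

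Next I would dispose of the non-birational case. If $\bar f$ is not birational, then $\hat q\le3$, so the left-hand side of \eqref{eq:formula} is at most $18$, while the right-hand side equals $13s_6+(13\beta_6-6\alpha)e\ge13s_6+11e$. Hence $s_6=0$ and $e=1$; since moreover $13\beta_6-6\alpha\le18$ leaves only the possibility $\alpha=1/3$, $\beta_6=1$, we obtain $6\hat q=11$, which is absurd. Thus $\bar f$ must be birational.

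In the birational case $\hat X$ is a $\QQ$-Fano threefold, $s_6\ge1$ (as $\dim\MMM_6=1>0$), and $\g(\hat X)\ge4$ by Corollary~\ref{cor:g}. Reducing \eqref{eq:main} modulo $13$ gives $6\hat q\equiv-6\alpha e\pmod{13}$. Feeding in the allowed indices $\hat q\in\{1,\dots,9,11,13,17,19\}$ from Theorem~\ref{thm:indices}, the size bound $6\hat q\ge13+11e$, and the torsion estimate $|\Clt(\hat X)|\ge3/e$ of Corollary~\ref{cor:torsion} together with Lemma~\ref{lemma:tor}, leaves only a short list of tuples $(\hat q,e,s_6,\beta_6)$. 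For most of them $\hat X$ is torsion free by Lemma~\ref{lemma:tor}(i), whence $e\ge3$, contradicting the small value of $e$ dictated by the congruence; the few remaining candidates correspond to explicit $\hat X$ of large index (notably $\hat q=19$, giving $\hat X=\PP(3,4,5,7)$, and the index $4$ case with $|\Clt(\hat X)|=5$).

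To finish I would use that $\hat\MMM_6$ is the birational transform of the pencil $\MMM_6=|6A_X|$, hence is again a pencil with $\hat\MMM_6\qq s_6A_{\hat X}$; in particular $\dim|s_6A_{\hat X}+\tau|\ge1$ for the appropriate torsion class $\tau$. For the surviving explicit targets this fails: for instance on $\PP(3,4,5,7)$ one has $s_6=2$, yet $H^0(\OOO(2))=0$. I expect the main obstacle to be exactly this final step: untangling the birational case forces one to juggle the congruence from \eqref{eq:main}, the genus bound $\g(\hat X)\ge4$, and the torsion restrictions simultaneously, and then to eliminate the handful of explicit $\hat X$ that survive by an orbifold Riemann--Roch / Hilbert-series computation showing that $s_6A_{\hat X}$ cannot carry the transform of $|6A_X|$.
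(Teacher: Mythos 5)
Your proposal is correct in outline and follows the same skeleton as the paper's proof: first force $\beta_6\in\ZZ$ (you do this via the integrality of $k\alpha-q\beta_k$ from \eqref{eq:12-divisors} together with $\beta_6\in\frac13\ZZ$; the paper instead notes that $\MMM=|6A_X|$ is Cartier near an index-$3$ point --- the two arguments are equivalent), then deduce $\hat q\ge 4$, hence birationality of $\bar f$ and $s_6\ge 1$, and finally eliminate the numerical solutions of \eqref{eq:formula} using Corollary~\ref{cor:torsion}, Lemma~\ref{lemma:tor} and the genus bound of Corollary~\ref{cor:g}. Where you genuinely diverge is the endgame, and there your version is actually more careful than the paper's terse write-up. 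The paper asserts that the only solutions are $(\alpha,\hat q,e)=(2/3,8,1)$, $(1/3,4,1)$, $(1/3,8,2)$, but the congruence and size bounds alone also admit, for instance, $(1/3,17,1)$ (with $s_6=7$), $(1/3,19,8)$ and $(2/3,19,4)$ (both with $s_6=2$): the $\hat q=17$ tuple dies by the same torsion argument you describe ($e<3$ versus torsion-freeness from Lemma~\ref{lemma:tor}\ref{lemma:tor-a}), while the $\hat q=19$ tuples survive the torsion test (since $3/e<1$) and are killed exactly by your final observation that $\hat\MMM_6$ is a pencil with $\hat\MMM_6\qq s_6A_{\hat X}$ whereas $H^0(\PP(3,4,5,7),\OOO(2))=0$. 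Similarly, for $(\hat q,e)=(4,1)$ the paper's appeal to $\g(\hat X)\ge 4$ literally excludes only case \ref{lemma:tor-bb} of Lemma~\ref{lemma:tor} (genus $3$); case \ref{lemma:tor-bc} has $\g=5$ and $|\Clt|=5$ (it is realized by $\PP^3/\upmu_5$) and is excluded only by the Riemann--Roch step you propose: there $\dim|A_{\hat X}+\tau|\le 0$ for every torsion class $\tau$, so this target cannot carry the transform of the pencil $|6A_X|$ with $s_6=1$. In short, your plan supplies precisely the arguments that the paper compresses into ``one can check''; the only caveat is that it remains a plan --- you neither write out the complete list of tuples nor carry out the orbifold Riemann--Roch computation in the index-$4$ case --- but the $\beta_6\in\ZZ$ step, the non-birational case, and every elimination mechanism you invoke are valid and, taken together, sufficient.
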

\begin{proof}
Assume that $f(E)$ is a point of index $3$.
By Corollary \ref{cor:discr}
$\alpha=1/3$ or $2/3$. 
Since $\MMM$ is Cartier near $f(E)$, the number $\beta_6$ is an integer.
By \eqref{eq:beta-m}\ $\beta_6\ge 2\alpha$.
Then it is easy to see from \eqref{eq:formula} that 
$\hat q\ge 4$. Hence, $\bar f$ is birational 
and so $s_6>0$ because $\dim \MMM>0$.
One can check that there are only the following solutions:
$(\alpha, \hat q,e)=(2/3, 8,1)$, $(1/3, 4,1)$, and $(1/3, 8,2)$.
If $\hat q=8$, then $e\le 2$ and we get a contradiction by Corollary~\ref{cor:torsion} and Lemma~\ref{lemma:tor}\ref{lemma:tor-a}.
If $\hat q=4$, then $e=1$, $|\Clt(\hat{X})|\ge 3$ by Lemma~\ref{lemma:torsion}, and again this contradicts Lemma~\ref{lemma:tor}
because $\g(\hat X)\ge 4$ by Corollary~\ref{cor:g}.
\end{proof}

\begin{lemma}
\label{case:P7}
$f(E)\neq \rP_7$.
\end{lemma}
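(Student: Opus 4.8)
The plan is to rule out the case $f(E)=\rP_7$ by pushing the numerical relation \eqref{eq:formula} through the same arithmetic bookkeeping used in Lemmas~\ref{case:nG} and~\ref{case:P3}, together with the torsion constraint of Corollary~\ref{cor:torsion}. First I would record the local data at $\rP_7$: since $\rP_7$ is a cyclic quotient singularity of index $7$ and $f$ is the Kawamata blowup of it, Corollary~\ref{cor:discr} gives $\alpha=1/7$. Near $\rP_7$ one has $-K_X\qq 13A_X$ and I would compute the residue of $A_X$ (equivalently of $\MMM_6=|6A_X|$) modulo $\ZZ$ in the local class group $\ZZ/7$, so as to pin down $\beta_6 \bmod \ZZ$; combined with the global bound \eqref{eq:beta-m} that $\beta_6\ge 2\alpha=2/7$, this determines the possible values of $\beta_6$.

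Next I would substitute $\alpha=1/7$ into \eqref{eq:formula}, giving $6\hat q = 13 s_6 + (13\beta_6 - 6/7)e$ with the inequality $6\hat q \ge 13 s_6 + (20/7)e$. As in the previous lemmas, the integrality of the right-hand side (since $6\hat q$ and $13 s_6$ are integers) forces $(13\beta_6-6/7)e\in\ZZ$, which constrains $e$ modulo $7$. Because $\hat q\le 19$ and $\hat q\in\{1,2,3\}$ in the non-birational case, I expect the positivity of $\alpha e$ together with these congruences to leave only finitely many candidate tuples $(\hat q, e, s_6, \beta_6)$. I would then separate the birational and non-birational subcases exactly as before: if $\bar f$ is birational then $s_6>0$ (since $\dim\MMM_6=1$ by Corollary~\ref{cor:dim}), so $\hat q\ge 6$, and Corollary~\ref{cor:g} gives $\g(\hat X)\ge 4$, while the torsion estimate $|\Clt(\hat X)|\ge 3/e$ from Corollary~\ref{cor:torsion} must be reconciled with Lemma~\ref{lemma:tor}; if $\bar f$ is not birational the constraint $\hat q\le 3$ should be incompatible with the lower bound forced by $\alpha e>0$.

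The main obstacle, I anticipate, is the arithmetic at the birational end: after reducing to a short list of tuples I must show each one violates either Lemma~\ref{lemma:tor} or Corollary~\ref{cor:torsion}. Concretely, the dangerous surviving candidates will be those with $\hat q\in\{8,9,11,13,17,19\}$ and small $e$, where $\Cl(\hat X)$ could be torsion free (contradicting $|\Clt(\hat X)|\ge 3/e$ when $e$ is small) or where $\g(\hat X)\ge 4$ collides with the enumerated exceptional cases \ref{lemma:tor-ba}--\ref{lemma:tor-bc}. I would handle these by the same two-pronged argument as in Lemma~\ref{case:P3}: large $\hat q$ forces $\Cl(\hat X)$ torsion free by Lemma~\ref{lemma:tor}\ref{lemma:tor-a} and hence contradicts Corollary~\ref{cor:torsion} once $e$ is bounded, whereas the remaining small-index cases are excluded because the genus bound $\g(\hat X)\ge 4$ is inconsistent with the short list in Lemma~\ref{lemma:tor}. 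This should close off $f(E)=\rP_7$ entirely.
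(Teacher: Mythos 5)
Your proposal matches the paper's proof essentially step for step: compute the local class of $\MMM_6$ at $\rP_7$ (giving $\beta_6=1/7+m_6$ with $m_6\ge 1$ by \eqref{eq:beta-m}), rewrite \eqref{eq:formula} as $6\hat q=e+13(s_6+m_6e)$, enumerate the solutions, and exclude them via Corollary~\ref{cor:torsion} against Lemma~\ref{lemma:tor}\ref{lemma:tor-a} --- which is exactly what happens, the only surviving tuples being $(\hat q,e)=(9,2)$ and $(11,1)$. Two cosmetic slips do not affect this: since $\beta_6\equiv 1/7\pmod{\ZZ}$, the coefficient $13\beta_6-6/7=1+13m_6$ is automatically an integer (so no congruence condition on $e$ modulo $7$ actually arises), and the bound you need in the birational case comes from $m_6\ge1$ rather than from $s_6>0$ alone.
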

\begin{proof}
Assume that $f(E)=\rP_7$.
Then $\alpha=1/7$ by Corollary \ref{cor:discr} and
near $\rP_7$ we have $A_X\sim K_X$, $\MMM\sim -K_X$. 
Hence $\beta_6=1/7+m_6$, where $m_6\ge 1$ by \eqref{eq:beta-m}.
Then \eqref{eq:formula} can be rewritten as follows
\begin{equation*} 
6\hat{q}=e+13(s_6+m_6e).
\end{equation*}
The equation has the following solutions: $(\hat q, e)=(9,2)$ and $(11,1)$.
This contradicts Corollary~\ref{cor:torsion} and Lemma~\ref{lemma:tor}\ref{lemma:tor-a}.
\end{proof}

\begin{lemma}
\label{case:P2}
If $f(E)=\rP_2$, then \xref{prop:|6A|}\xref{prop:|6A|:P(1,2,3,5)} holds.
Moreover, $\ct(X, |6A_X|)=1/2$.
\end{lemma}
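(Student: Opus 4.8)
The hypothesis $f(E)=\rP_2$ already fixes the geometry of $f$: since $\rP_2$ is a cyclic quotient singularity of type $\frac12(1,1,1)$, the only extremal blowup centered there is the Kawamata blowup, so $f$ is that blowup and $\alpha=1/2$ by Corollary~\ref{cor:discr}. The plan is then to push the numerology of \eqref{eq:main} to a short list of values of $(\hat q,e)$ and to eliminate all but the one producing $\PP(1,2,3,5)$.

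First I would record the integrality of $\beta_6$. Near $\rP_2$ the local class group is $\ZZ/2$, and since $-K_X=13A_X$ with $13$ odd, the class of $A_X$ is the nonzero element; hence $6A_X$ is Cartier at $\rP_2$, so $\beta_6\in\ZZ$, and $\beta_6\ge 2\alpha=1$ by \eqref{eq:beta-m}. Substituting $\alpha=1/2$ into \eqref{eq:main} (for $k=6$) gives $3(2\hat q+e)=13(s_6+\beta_6 e)$, so $13\mid 2\hat q+e$; writing $2\hat q+e=13t$ yields $s_6+\beta_6 e=3t$. A direct check shows the non-birational case gives nothing (there $\hat q\le 3$ forces $s_6=0$, then $e=1$ and $13\beta_6=6\hat q+3$, which is impossible), so $\bar f$ is birational and $s_6\ge 1$. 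The inequalities $s_6\ge1$, $\beta_6\ge1$, $e\ge1$ together with $\hat q\le19$ bound $t\le 3$, and enumerating $t=1,2,3$ against the admissible Fano indices leaves exactly $(\hat q,e)\in\{(6,1),(11,4),(17,5),(19,1)\}$.

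Three of these I would discard. For $\hat q\in\{6,19\}$ the group $\Cl(\hat X)$ is torsion free (Lemma~\ref{lemma:tor}\ref{lemma:tor-a}), so $d=e=1$ by Lemma~\ref{lemma:torsion}, contradicting $d\ge 3$ from Corollary~\ref{cor:torsion}. For $\hat q=17$ torsion alone is not enough ($d=e=5$), so instead I use that $\hat X\simeq\PP(2,3,5,7)$ by Theorem~\ref{thm:19-17}: here $s_6=4$, while the proper transform $\hat\MMM_6$ of the pencil $\MMM_6=|6A_X|$ is again a pencil, now contained in $|4A_{\hat X}|=|\OOO(4)|$; but $\dim|\OOO_{\PP(2,3,5,7)}(4)|=0$, a contradiction. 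This leaves $(\hat q,e)=(11,4)$ with $\beta_6=1$ and $s_6=2$, whence $\ct(X,|6A_X|)=\alpha/\beta_6=1/2$, as asserted.

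It remains to upgrade ``$\qQ(\hat X)=11$'' to ``$\hat X\simeq\PP(1,2,3,5)$'', and this is the step I expect to be the genuine obstacle, since a priori another index-$11$ $\QQ$-Fano threefold could intervene. Here I would exploit that \eqref{eq:main} holds for every $k$: with $\alpha=1/2$, $e=4$ it reduces to $s_k=k-4\beta_k$, and the integrality $k/2-13\beta_k\in\ZZ$ forces $\beta_k\in\frac12\ZZ$ with $\beta_k\equiv k/2$ in the obvious sense, so that $\beta_3=1/2$, $s_3=1$ (and likewise $s_5=3$, $s_6=2$). In particular $|A_{\hat X}|\neq\varnothing$, which combined with $\qQ(\hat X)=11$ and $\g(\hat X)\ge\g(X)=4$ (Corollary~\ref{cor:g}) singles out $\PP(1,2,3,5)$ through Theorem~\ref{thm:19-17} and the classification of index-$11$ $\QQ$-Fano threefolds. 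The delicate point is exactly this identification: one must exclude the companion index-$11$ numerical type, and the cleanest route is to observe that a nonempty $|A_{\hat X}|$ (equivalently, the low-degree sections forcing $\g(\hat X)>9$) is incompatible with every index-$11$ $\QQ$-Fano threefold other than $\PP(1,2,3,5)$. This establishes \ref{prop:|6A|}\ref{prop:|6A|:P(1,2,3,5)} and the value $\ct(X,|6A_X|)=1/2$.
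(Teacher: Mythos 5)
Your proposal is correct and follows essentially the same route as the paper: the Cartier/integrality argument for $\beta_6$ at $\rP_2$, the reduction of \eqref{eq:main} to $3(2\hat q+e)=13(s_6+\beta_6 e)$, the elimination of the non-birational case, the enumeration to $(\hat q,e)\in\{(6,1),(11,4),(17,5),(19,1)\}$, the exclusion of $e=1$ via Corollary~\ref{cor:torsion} against Lemma~\ref{lemma:tor}\ref{lemma:tor-a}, the exclusion of $(17,5)$ via $\hat X\simeq\PP(2,3,5,7)$ and $\dim|4A_{\hat X}|=0$, and the conclusion $\beta_6=1$, $\ct(X,|6A_X|)=\alpha/\beta_6=1/2$ are all exactly the paper's steps (the paper organizes the enumeration by first treating $\hat q\le 4$ and $e=1$, then bounding $e\le(6\hat q-13)/10$, but the outcome is identical). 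The one point where you diverge is the endgame identification of $\hat X$: you derive $s_3=1$ from \eqref{eq:main} with $k=3$ to get $|A_{\hat X}|\neq\varnothing$ (legitimately upgraded from $\QQ$-linear to linear equivalence since $\Cl(\hat X)$ is torsion free for $\hat q=11$ by Lemma~\ref{lemma:tor}\ref{lemma:tor-a}), whereas the paper uses $s_6=2$ to get a pencil inside $|2A_{\hat X}|$ and then cites \cite[Proposition~3.6 and Theorem~1.4]{P:2010:QFano} directly. You are right that this identification is the delicate point: as stated inside this paper, Theorem~\ref{thm:19-17} only covers $\qQ=11$ with $\g>9$, and Corollary~\ref{cor:g} gives merely $\g(\hat X)\ge 4$, so your appeal to ``the classification of index-$11$ $\QQ$-Fano threefolds'' to exclude the companion numerical type is an assertion you do not prove. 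But this is a citation-level issue rather than a mathematical gap: the paper resolves the very same point by the very same external classification (which shows that low-degree sections --- $\dim|2A_{\hat X}|\ge 1$ in the paper's version --- occur only for $\PP(1,2,3,5)$ among index-$11$ candidates), so your instinct and your proposed resolution coincide with the paper's actual argument.
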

\begin{proof}
Assume that $f(E)=\rP_2$.
In this case $\alpha=1/2$ by Corollary \ref{cor:discr} and $\MMM$ is Cartier at $\rP_2$. Hence, $\beta_6$ is a positive integer.
Then \eqref{eq:formula} can be rewritten as follows
\begin{equation*}
3(2\hat q+e)= 13 (s_6+\beta_6 e).
\end{equation*}
If $\hat q\le 4$, then $e\le 2$ and $2\hat q+e\equiv 0\mod 13$,
a contradiction.
Therefore, $\hat q> 4$ and $\bar f$ is birational. Then $s_6>0$.
Assume that $e=1$. Then using $2\hat q+e\equiv 0\mod 13$ again we obtain $\hat q=6$ or $19$.
By Corollary~\ref{cor:torsion} \ $\Clt(X)\neq 0$.
This contradicts Lemma~\ref{lemma:tor}\ref{lemma:tor-a}.
Thus $e>1$ and $\hat q\ge 6$. Note that $e\le (6\hat q-13)/10$, so $e\le 10$ and $e<6\hat q/10$. 
Then using $2\hat q+e\equiv 0\mod 13$ again we obtain the following possibilities:
\[
(\hat q, e, s_6, \beta_6)=(11, 4, 2, 1)\quad \text{or}\quad (17, 5, 4, 1).
\]
If $\hat q=17$, then $\dim |4A_{\hat{X}}|\ge \dim \MMM=1$ because $s_6=4$.
On the other hand, $\hat X\simeq \PP(2,3,5,7)$ by Theorem~\ref{thm:indices}, so $\dim |4A_{\hat{X}}|=0$, a contradiction.
Therefore, $\hat q=11$ and $e=4$. Since $s_6=2$, 
$\dim |2A_{\hat{X}}|\ge \dim \MMM=1$. Then
$\hat{X}\simeq \PP(1,2,3,5)$ by \cite[Proposition~3.6 and Theorem~1.4]{P:2010:QFano}.
We obtain the case \ref{prop:|6A|}\ref{prop:|6A|:P(1,2,3,5)}.
Note that $\beta_6=1$, so $\ct(X, |6A_X|)=\alpha/\beta_6=1/2$ in this case.
\end{proof}

\begin{lemma}
\label{case:P5}
If $f(E)=\rP_5$, then \xref{prop:|6A|}\xref{prop:|6A|:P(1,1,2,3)} holds.
Moreover, $\ct(X, |6A_X|)=1/2$.
\end{lemma}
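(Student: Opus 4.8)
\section*{Proof proposal}

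The plan is to run the same numerical machinery as in the proof of Lemma~\ref{case:P2}, now with the local data of the index-$5$ point. First I would record, via Corollary~\ref{cor:discr}, that $\alpha=1/5$; since near $\rP_5$ one has $\MMM=|6A_X|\sim A_X\sim -2K_X$ in the local class group $\ZZ/5$, the multiplicity satisfies $\beta_6\equiv 2/5\pmod\ZZ$, so I write $\beta_6=2/5+m_6$ with $m_6\in\ZZ_{\ge0}$, where $m_6\ge0$ encodes \eqref{eq:beta-m}. Substituting into \eqref{eq:formula} collapses it to the clean relation
\begin{equation*}
6\hat q=13\,(s_6+m_6e)+4e,
\end{equation*}
whence the congruence $3\hat q\equiv 2e\pmod{13}$.

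Next I would dispose of $m_6$. If $m_6\ge1$ then (in the birational case $s_6\ge1$) one has $s_6+m_6e\ge 1+e$, forcing $6\hat q\ge 13+17e$, i.e. $e\le(6\hat q-13)/17$; running this against the congruence over the admissible indices (Theorem~\ref{thm:indices}) leaves only $(\hat q,e)=(5,1)$, while the non-birational subcase $\hat q\le 3$ gives $e<1$. But $(5,1)$ is excluded by Corollary~\ref{cor:g} together with Lemma~\ref{lemma:tor}\ref{lemma:tor-ba}, since there $\g(\hat X)=2<4$. Hence $m_6=0$, so $\beta_6=2/5$ and $\ct(X,|6A_X|)=\alpha/\beta_6=1/2$, which is the last assertion of the lemma; the equation becomes $6\hat q=13s_6+4e$.

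The heart of the argument is tracking the other systems. For $k\in\{3,4,5,6,7\}$ I would read off from \eqref{eq:main}
\begin{equation*}
s_k=\frac{k(\hat q+\alpha e)-13\beta_k e}{13},\qquad \beta_k=\frac{c_k}{5},\quad c_k\equiv 2k\pmod 5,
\end{equation*}
where $\beta_k=c_k/5$ is the minimal multiplicity, so that a larger $m_k$ only decreases $s_k$; then I impose $s_k\in\ZZ_{\ge0}$ and $\dim|s_kA_{\hat X}|\ge\dim|kA_X|$. The single inequality $s_4=(4\hat q-7e)/13\ge0$ already kills the non-birational possibility $(\hat q,e)=(2,3)$ as well as $(9,7)$, $(11,10)$, $(13,13)$, $(17,19)$ and $(19,22)$; the case $(19,9)$ dies because $\dim|6A_{\PP(3,4,5,7)}|=0<1=\dim|6A_X|$, where Theorem~\ref{thm:19-17}(i) identifies $\hat X$. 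This leaves the target $(\hat q,e,s_6)=(7,4,2)$ and the stubborn $(17,6,6)$.

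I expect the exclusion of $\hat q=17$ to be the main obstacle: Theorem~\ref{thm:19-17}(ii) forces $\hat X\simeq\PP(2,3,5,7)$, and one checks that all of $s_3,\dots,s_7$ are non-negative integers satisfying every dimension inequality, so no purely numerical contradiction survives. To finish it I would feed $s_3,\dots,s_7$ into \eqref{eq:b-gamma-delta-1}: demanding $\gamma_k\ge0$ forces $\delta\ge\max_k(k/s_k)=2$, hence $b=(17\delta-13)/e\ge 7/2$. A divisorial extraction over a quotient point of index $r\in\{2,3,5,7\}$ of $\PP(2,3,5,7)$ is the Kawamata blowup with $b=1/r<7/2$ by \cite{Kawamata:Div-contr}, contradicting $\delta\ge 2$; so the centre is forced to be smooth and $b$ large, producing a high-index singularity on $\bar X$ incompatible with $\tilde X$ (whose indices are $\le 7$). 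This delicate step is best settled by running the $2$-ray game on the Kawamata blowup of $\rP_5$ explicitly and reading off its unique output. Once $(7,4,2)$ remains, the formula gives $s_7=(7\hat q-9e)/13=1$ (and $m_7=0$ is forced, else $s_7<0$), so the pencil $|7A_X|$ maps into $|A_{\hat X}|$ and yields two distinct members of $|A_{\hat X}|$; as $\hat q=7$, Theorem~\ref{thm:19-17}(v) gives $\hat X\simeq\PP(1,1,2,3)$, which is exactly \ref{prop:|6A|}\ref{prop:|6A|:P(1,1,2,3)}.
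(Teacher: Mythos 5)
Your numerical setup is sound and, up to reorganization, matches the paper's: the paper runs the same relation \eqref{eq:main} (mainly with $k=4$, giving $4\hat q=7e+13(s_4+m_4e)$, rather than your $k=6$ equation), excludes $(\hat q,e)=(5,1)$ exactly as you do via Corollary~\ref{cor:torsion}, Lemma~\ref{lemma:tor}\ref{lemma:tor-ba} and Corollary~\ref{cor:g}, and identifies $\hat X\simeq\PP(1,1,2,3)$ from $s_7=1$, $\dim\MMM_7>0$ and Theorem~\ref{thm:19-17}. Two bookkeeping remarks: your $m_6=0$ list omits $(\hat q,e)=(4,6)$, $(6,9)$, $(8,12)$ (all killed by your own test $s_4\ge 0$), and it omits $(5,1)$, which reappears with $m_6=0$, $s_6=2$ and \emph{survives} $s_4\ge0$; your torsion/genus exclusion of $(5,1)$ never used $m_6$, so it still applies, but as written it is stated only inside the $m_6\ge1$ branch, leaving the $m_6=0$ enumeration formally incomplete. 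On the plus side, you correctly catch and kill $(19,9)$ by the dimension count $\dim|6A_{\PP(3,4,5,7)}|=0<1$; this solution of the $k=4$ relation is in fact missing from the paper's stated list, so your numerics are more thorough there.

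The genuine gap is exactly where you flag it: the exclusion of $(\hat q,e)=(17,6)$ is not completed, and your sketch would not close it. From $\gamma_4\ge 0$ you get $\delta\ge 2$ and $b\ge 7/2$, which correctly rules out a singular centre (a divisorial extraction from a terminal cyclic quotient point is the Kawamata blowup with $b=1/r$), but the claim that a smooth centre with large $b$ ``produces a high-index singularity on $\bar X$ incompatible with $\tilde X$'' is unfounded: the flips in $\chi$ can change singularities, so no such index bound on $\bar X$ is available. Nor can you ``run the $2$-ray game explicitly,'' since $X$ is not yet known to be a hypersurface --- that is the theorem being proved. The paper closes the case differently: since $\bar f(\bar F)$ is a smooth point, $K_{\hat X}$ is Cartier there, so $b$ is an \emph{integer}; then $6b=17\delta-13$ forces $\delta\equiv 5\pmod 6$, hence $\delta\ge 5$, hence $\gamma_3,\gamma_4,\gamma_7>0$. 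Therefore $\bar f(\bar F)$ lies on $\hat M_3\cap\hat M_4\cap\Bs\hat\MMM_7$, and with $s_3=3$, $s_4=2$, $s_7=5$ these are distinct prime divisors which may be taken to be $\{y_3=0\}$, $\{y_2=0\}$, $\{y_5=0\}$ in $\PP(2,3,5,7)$; their intersection is the index-$7$ point $Q_7$, contradicting smoothness of the centre. This integrality-of-$b$ argument combined with the base-locus pinning of the centre is the missing idea in your proposal; everything else is essentially the paper's proof.
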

\begin{proof}
Assume that $f(E)=\rP_5$.
Then $\alpha=1/5$ by Corollary \ref{cor:discr} and
near $\rP_5$ we have $A_X\sim \MMM \sim -2K_X$, $4A_X\sim -3K_X$. 
Hence $\beta_4=3/5+m_5$.
For $k=4$ the relation \eqref{eq:main} has the form
\begin{equation*} 
4\hat{q}=13 s_4+(13 \beta_4-4 \alpha) e=7e+13(s_4+m_4e).
\end{equation*}
This gives the following possibilities:
\[
(\hat q, e) = (5,1), \quad (7,4),\quad  \text{and}\quad (17,6).
\]
If $(\hat q, e) = (5,1)$, then $|\Clt(\hat{X})|\ge 3$ by Corollary~\ref{cor:torsion}.
This contradicts Lemma~\ref{lemma:tor} because $\g(\hat{X})\ge 4$ by Corollary~\ref{cor:g}.

Assume that $(\hat q, e) =(17,6)$. By Theorem~\ref{thm:19-17} $\hat{X}\simeq \PP(2,3,5,7)$.
From \eqref{eq:main} for $k=3,4,7$ we obtain $s_3=3$, $s_4=2$, $s_7=5$.
The relations \eqref{eq:b-gamma-delta-1} have the form
\begin{equation*}
\begin{array}{lll}
6b&=&17\delta-13,
\\[1pt]
6\gamma_3&=&3\delta-3,
\\[1pt]
6\gamma_4&=&2\delta-4,
\\[1pt]
6\gamma_7&=&5\delta-7.
\end{array}
\end{equation*}

Since $\gamma_4\ge 0$,  $\delta\ge 2$. 
Hence, $b=(17\delta-13)/6>3$. 
Since $\hat{X}=\PP(1,1,2,3)$ has only cyclic quotient singularities, 
the point $\bar f(\bar{F})$ is smooth
by \cite{Kawamata:Div-contr}.
In particular, $b$ is an integer. 
But then $\delta\ge 5$.

Since $s_4=2$ and $\gamma_4>0$, $\bar f(\bar{F})\in \hat{M}_4\sim 2A_{\hat{X}}$.
Since $s_3=3$ and $\gamma_3>0$, $\bar f(\bar{F})\in \hat{M}_3\sim 3A_{\hat{X}}$.
Similarly,  $s_7=5$ and $\gamma_7>0$, hence $\bar f(\bar{F})\in \Bs \hat{\MMM}_7=|5A_{\hat{X}}|$.
Note that $\hat{M}_4,\, \hat{M}_3,\, \hat{M}_7$ are distinct prime divisors.
Hence in the homogeneous coordinates in $\hat{X}=\PP(2,3,5,7)$ we may assume that 
$\hat{M}_4=\{y_2=0\}$, $\hat{M}_3=\{y_3=0\}$, $\hat{M}_7=\{y_5=0\}$,
where $\deg(y_i)=i$.Thus 
the intersection $\hat{M}_4\cap \hat{M}_3\cap \hat{M}_7$ 
is the point $Q_7$ of index $7$ on $\hat{X}=\PP(2,3,5,7)$.
Hence, $\bar f(\bar{F})=Q_7$. 
On the other hand, by the above $\bar f(\bar{F})$ must be a smooth point of $\PP(2,3,5,7)$, a contradiction.

Therefore, $(\hat q, e) = (7,4)$. Then the relation \eqref{eq:main} for $k=7$ give us $s_7=1$, i.e. 
$\MMM_7\qq A_{\hat{X}}$. Since $\dim \MMM_7>0$, by Theorem~\ref{thm:19-17}\ $\hat{X}\simeq \PP(1,1,2,3)$.
We obtain the case \ref{prop:|6A|}\ref{prop:|6A|:P(1,1,2,3)}.
Applying \eqref{eq:main} with $k=6$ we obtain $s_6=2$ and $\beta_6=2/5$.
Therefore, $\ct(X, |6A_X|)=\alpha/\beta_6=1/2$.
\end{proof}
Thus we considered all the possibilities for the center of the blowup $f$.
This concludes the proof of Proposition~\ref{prop:|6A|}.
\end{proof}

\begin{proof}[Proof of Theorem~\xref{thm:|6A|}]
Let $f: \tilde{X}\to X$ be the Kawamata blowup of $\rP_5$.
Then $\alpha=1/5$.
By Proposition~\ref{prop:|6A|} $\ct(X,|6A_X|)=1/2$.
Therefore, $(X, \frac12 \MMM)$ is canonical.
As in the beginning of the proof of Proposition~\ref{prop:|6A|} we see that
$\beta_6\equiv 2 \alpha\mod \ZZ$. On the other hand, $1/2=\ct(X,|6A_X|)\le  \alpha/\beta_6$.
Hence, $\beta_6=2/5$ and  $\ct(X,|6A_X|)=  \alpha/\beta_6$. This means that
the extraction $f$ is $K_X+ \frac12 \MMM$-crepant. 
Then running the $K_{\tilde{X}}+ \frac12  \tilde{\MMM}$-MMP on $\tilde{X}$ 
we obtain the link \eqref{diagram-main}. According to 
Lemma~\ref{case:P5} \  $\hat{X}=\PP(1,1,2,3)$.
\end{proof}

\subsection{}
Now we consider the link of Theorem~\ref{thm:|6A|} in details.
For a divisor $\hat D$ on $\hat{X}$ or for a rational function $\varphi \in \Bbbk(\hat{X})^*$,
let $\upnu_{\bar{F}}(\hat D)$ (resp., $\upnu_{\bar{F}}(\varphi)$) denote the multiplicity of 
$\bar{f}^*(\hat D)$ (resp., $\varphi$) along $\bar{F}$.
Thus $\upnu_{\bar{F}}(\bar{M}_k)=\gamma_k$ and $\upnu_{\bar{F}}(\bar{E})=\delta$.

\begin{proposition}
We have $\bar{F}=\bar{M}_4$,
\begin{eqnarray*}
\label{eq:comp:mult1}
&
\hat{E}\sim 4A_{\hat{X}},\quad 
\hat{M}_3\sim A_{\hat{X}},\quad 
\hat{M}_7\sim A_{\hat{X}},\quad 
\hat{M}_6\sim 2A_{\hat{X}},\quad 
\hat{M}_5\sim 3A_{\hat{X}},
\\
\label{eq:comp:mult2}
&\upnu_{\bar{F}}(\hat{M}_3)=1, \quad
\upnu_{\bar{F}}(\hat{M}_7)=0, \quad
\upnu_{\bar{F}}(\hat{M}_6)=2, \quad
\upnu_{\bar{F}}(\hat{M}_5)=4.
\end{eqnarray*}
Moreover, $\bar f(\bar{F})$ is a smooth point of $\hat{X}$.
\end{proposition}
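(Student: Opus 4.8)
The plan is to extract all the numerical data of the link from the two systems of relations already in place, and then to pin down the single remaining unknown $\delta=\upnu_{\bar F}(\bar E)$ by a geometric argument on $\hat X=\PP(1,1,2,3)$.

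First I would compute the $s_k$ and the integral parts of the $\beta_k$. By Lemma~\ref{case:P5} we have $\alpha=1/5$, $\hat q=\qQ(\hat X)=7$, $e=4$ and $\beta_6=2/5$. Since near $\rP_5$ one has $A_X\sim-2K_X$, the residue of $\beta_k$ modulo $\ZZ$ equals $\{2k/5\}$, that is $1/5,\,3/5,\,0,\,2/5,\,4/5$ for $k=3,4,5,6,7$. Writing $\beta_k=\{2k/5\}+m_k$ with $m_k\in\ZZ_{\ge0}$ and substituting into \eqref{eq:main} (with $q=13$) turns each instance into a single linear Diophantine equation $13s_k+52m_k=7k-4c_k$, where $c_k:=13\{2k/5\}-k/5\in\{2,7,-1,4,9\}$. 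Nonnegativity of $s_k$ and $m_k$ forces the unique solutions $m_k=0$ for all $k$ and
\[
s_3=1,\quad s_4=0,\quad s_5=3,\quad s_6=2,\quad s_7=1 .
\]
Together with $e=4$ this gives $\hat E\sim 4A_{\hat X}$, $\hat M_3\sim\hat M_7\sim A_{\hat X}$, $\hat M_6\sim2A_{\hat X}$, $\hat M_5\sim3A_{\hat X}$; and $s_4=0$ yields $\bar F=\bar M_4$ by the criterion recorded just after \eqref{diagram-main}.

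Next I would use \eqref{eq:b-gamma-delta-1}, which here reads $4\gamma_k=s_k\delta-k$ and $4b=7\delta-13$, so everything is determined by $\delta$. Since the $\gamma_k$ are multiplicities of pullbacks of effective divisors we have $\gamma_k\ge0$, and the case $k=7$ already gives $\delta\ge7$. To see that $\hat P:=\bar f(\bar F)$ is a smooth point I would rule out the two non-Gorenstein points $Q_2,Q_3$ of $\hat X$: if $\hat P$ were the point of index $r\in\{2,3\}$, then by \cite{Kawamata:Div-contr} $\bar f$ would be the Kawamata blowup of $\hat X\ni\hat P$, so $b=1/r$ and hence $\delta=(4/r+13)/7<7$, forcing $\gamma_7<0$, a contradiction. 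Thus $\hat P$ is smooth and $b,\delta,\gamma_k$ are integers.

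The crux is to show $\delta=7$, equivalently $\gamma_7=0$, equivalently $\hat P\notin C:=\{z_0=z_1=0\}=\Bs|A_{\hat X}|$, where $z_0,z_1,z_2,z_3$ are coordinates of weights $1,1,2,3$. The one numerical identity I would exploit is $\gamma_6=\gamma_3+\gamma_7+1$, which is immediate from $\gamma_6=(\delta-3)/2$ and $\gamma_3+\gamma_7=(\delta-5)/2$. Suppose $\hat P\in C$. As $\hat P$ is smooth and distinct from $Q_2,Q_3$, the function $z_2$ is a unit at $\hat P$ while $z_0,z_1$ are local coordinates cutting out $C$. Because $\gamma_6>0$, every member of the pencil $\hat\MMM_6\subseteq|2A_{\hat X}|=\langle z_0^2,z_0z_1,z_1^2,z_2\rangle$ must have vanishing $z_2$-coefficient, so $\hat\MMM_6$ is a pencil of binary quadrics in $z_0,z_1$. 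A general such quadric factors into two linear forms, at most one of which can lie in the (at most one-dimensional) subspace of linear forms whose $\upnu_{\bar F}$-value exceeds $\gamma_7$; hence $\gamma_6=\upnu_{\bar F}(\hat M_6)\le\gamma_3+\gamma_7$, contradicting $\gamma_6=\gamma_3+\gamma_7+1$. Therefore $\hat P\notin C$; then $\hat P$ lies in a smooth coordinate chart, a general member of $|A_{\hat X}|$ is a unit at $\hat P$, so $\gamma_7=0$ and $\delta=7$. Back-substitution gives $\gamma_3=1$, $\gamma_5=4$, $\gamma_6=2$ and $b=9$. The only genuinely geometric obstacle is this final step—controlling the $\upnu_{\bar F}$-values of a pencil of binary quadrics at the smooth center—while everything else is bookkeeping with \eqref{eq:main} and \eqref{eq:b-gamma-delta-1}.
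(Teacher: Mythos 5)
Your proof is correct, and its bookkeeping half coincides with the paper's: the extraction of the $s_k$ (and $m_k$) from \eqref{eq:main} --- which the paper carries out case by case in Lemma~\ref{case:P5} and in the proof of the proposition --- the relations $4\gamma_k=s_k\delta-k$ and $4b=7\delta-13$ from \eqref{eq:b-gamma-delta-1}, the bound $\delta\ge 7$ from $\gamma_7\ge 0$, and the smoothness of $\hat P=\bar f(\bar F)$ via Kawamata's theorem (your ``$b=1/r$ would force $\delta<7$'' is the same argument as the paper's ``$b\ge 9$'' in contrapositive form). Where you genuinely diverge is the crucial step $\delta=7$. The paper assumes $\delta>7$, places $\hat P$ on the line $C=\Bs|A_{\hat X}|$ via $\gamma_7>0$, and then invokes an \emph{irreducible} member $\hat M_6\in|2A_{\hat X}|$ through $\hat P$: its equation has nonzero $y_2$-coefficient, so it meets $C$ only at the index-$3$ point, forcing $\hat P=Q_3$ and contradicting smoothness. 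You instead note that if $\hat P\in C$ then every member of $\hat\MMM_6$ must vanish at $\hat P$, where $z_2\neq 0$, hence has vanishing $z_2$-coefficient and is a binary quadric $\ell_1\ell_2$ in $z_0,z_1$; then a valuation count finishes: the jumping subspace $\{\ell:\upnu_{\bar F}(\ell)>\gamma_7\}$ is a proper linear subspace of $\langle z_0,z_1\rangle$, hence equals $\Bbbk\psi_3$ with value exactly $\gamma_3$ (note $\gamma_3=\gamma_7+1>\gamma_7$ holds for all $\delta$), so a general member has $\upnu_{\bar F}\le\gamma_3+\gamma_7$, contradicting the identity $\gamma_6=\gamma_3+\gamma_7+1$. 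Your route buys independence from the irreducibility of the general $\hat M_6$, which the paper uses without comment (it ultimately rests on $|A_X|=|2A_X|=\varnothing$ and $\dim|3A_X|=0$, Corollary~\ref{cor:dim}), replacing it by semicontinuity of $\upnu_{\bar F}$ along pencils and the one-dimensionality of the jumping subspace; indeed your intermediate conclusion (all members of $\hat\MMM_6$ reducible) already contradicts that irreducibility directly, so your count is strictly more self-contained, at the cost of being longer than the paper's two-line geometric pinning of $\hat P$ to $Q_3$. One cosmetic caveat: $z_0,z_1,z_2$ are sections of $\OOO(A_{\hat X})$-multiples rather than functions, so ``$z_2$ is a unit at $\hat P$'' should be read as the standard normalization --- all that is needed is that a form $\lambda z_2+q(z_0,z_1)$ vanishes at $\hat P=(0,0,a,b)$ with $a\neq 0$ iff $\lambda=0$ --- and this is harmless.
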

\begin{proof}
By our computations in \ref{case:P5}\ $e=4$. 
From \eqref{eq:main} for $k=3,5,6,7$ we obtain $s_3=s_7=1$, $s_6=2$, $s_5=3$.
Further, the relations \eqref{eq:b-gamma-delta-1} have the form
\begin{equation*}
\begin{array}{lll}
4b&=&7\delta-13,
\\[1pt]
4\gamma_3&=&\delta-3,
\\[1pt]
4\gamma_5&=&3\delta-5,
\\[1pt]
4\gamma_6&=&2\delta-6,
\\[1pt]
4\gamma_7&=&\delta-7.
\end{array}
\end{equation*}
Since $\gamma_7\ge 0$, we have $\delta\ge 7$. 
Hence, $b=(7\delta-13)/4\ge 9$. 
Since $\hat{X}=\PP(1,1,2,3)$ has only cyclic quotient singularities, 
the point $\bar f(\bar{F})$ is smooth
by \cite{Kawamata:Div-contr}. 
We claim that $\delta= 7$. 
Indeed, assume to the contrary that $\delta> 7$. Then $\gamma_7>0$, i.e. the point $\bar f(\bar{F})$ is contained in 
the base locus of the pencil $\hat{\MMM}_7=|A_{\hat{X}}|$. In other words, $\bar f(\bar{F})$ 
lies on the line $y_1=y_1'=0$. 
Since $s_6=2$ and $\gamma_6>0$, $\bar f(\bar{F})\subset \Bs \hat{\MMM}_6\subset |2A_{\hat{X}}|$.
Thus there exists an irreducible member $\hat{M}_6\in |2A_{\hat{X}}|$ passing through $\bar f( F)$.
This implies that $\bar f(\bar{F})=\{y_1=y_1'=y_2=0\}$, i.e. $\bar f(\bar{F})$ is the point of index $3$ 
on $\hat{X}=\PP(1,1,2,3)$, a contradiction. 
Thus $\delta= 7$
$\gamma_3=1$, $\gamma_5=4$, $\gamma_6=2$, $\gamma_7=0$.
\end{proof}

\begin{scorollary}
We can take coordinates $y_1,y_1',y_2,y_3$ in $\hat{X}=\PP(1,1,2,3)$ so that 
\[
\hat{M}_3=\{y_1=0\},\qquad  \hat{M}_7=\{y_1'=0\}, \qquad \hat{M}_6=\{y_2=0\}, \qquad \hat{M}_5=\{y_3=0\}.
\]
\end{scorollary}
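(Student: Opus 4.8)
The plan is to build the four coordinates one at a time, matching each variable to one of $\hat{M}_3,\hat{M}_7,\hat{M}_6,\hat{M}_5$, so that at each stage the only thing to verify is that the relevant divisor is non-degenerate in the variables already fixed. Throughout write $\nu:=\upnu_{\bar{F}}$, a divisorial valuation centered at the smooth point $\hat{P}:=\bar{f}(\bar{F})$; by the previous Proposition we have $\nu(\hat{M}_3)=\gamma_3=1$, $\nu(\hat{M}_7)=\gamma_7=0$, $\nu(\hat{M}_6)=\gamma_6=2$, $\nu(\hat{M}_5)=\gamma_5=4$, and since $\nu$ is centered at $\hat{P}$ these positive values mean $\hat{P}\in\hat{M}_3\cap\hat{M}_6\cap\hat{M}_5$ while $\hat{P}\notin\hat{M}_7$. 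Recall also that $\hat{M}_3$ and $\hat{M}_5$ are the images of the unique prime divisors of $|3A_X|$ and $|5A_X|$ (Corollary~\ref{cor:dim}) under the birational link, hence are prime divisors on $\hat{X}$.

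First I would fix $y_1,y_1'$. Both $\hat{M}_3$ and $\hat{M}_7$ lie in the pencil $|A_{\hat{X}}|$, and they are distinct, because $\hat{P}\in\hat{M}_3$ while $\hat{P}\notin\hat{M}_7$. Two distinct members of $|A_{\hat{X}}|$ form a basis of $H^0(\hat{X},A_{\hat{X}})$, so I may choose coordinates with $\hat{M}_3=\{y_1=0\}$ and $\hat{M}_7=\{y_1'=0\}$; in particular $\nu(y_1)=1$ and $\nu(y_1')=0$. Next, $\hat{M}_6\in|2A_{\hat{X}}|$ is irreducible (the moving pencil $|6A_X|$ is not composite with a pencil, as there is no intermediate positive-dimensional linear system by Corollary~\ref{cor:dim}), so its equation is not a product of two linear forms in $y_1,y_1'$; equivalently $\hat{M}_6$ does not pass through the index-$2$ point $Q_2=\{y_1=y_1'=y_3=0\}$, so the coefficient of $y_2$ in its equation is nonzero. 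Replacing $y_2$ by this equation (an automorphism of $\hat{X}=\PP(1,1,2,3)$ fixing $y_1,y_1'$) gives $\hat{M}_6=\{y_2=0\}$ and $\nu(y_2)=2$.

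The remaining, and main, point is to show that the equation of $\hat{M}_5\in|3A_{\hat{X}}|$ involves $y_3$, i.e. that $\hat{M}_5$ avoids the index-$3$ point $Q_3=\{y_1=y_1'=y_2=0\}$; granting this, replacing $y_3$ by the equation of $\hat{M}_5$ (an automorphism fixing $y_1,y_1',y_2$) completes the proof. Set $L:=\hat{M}_3\cap\hat{M}_6=\{y_1=y_2=0\}\cong\PP(1,3)$, whose unique singular point is $Q_3$. Since $\hat{P}\in\hat{M}_3\cap\hat{M}_6$ is smooth, it is a smooth point of $L$ lying on $\hat{M}_5$; and as $L\cap\hat{M}_7=Q_3$ while $\hat{P}\notin\hat{M}_7$, we get $\hat{P}\neq Q_3$. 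Now $\hat{M}_5\cdot L=3A_{\hat{X}}\cdot 2A_{\hat{X}}^2=6A_{\hat{X}}^3=1$. If $L\not\subset\hat{M}_5$, then $\hat{M}_5|_L$ is an effective divisor of degree $1$ on $L$; the local intersection number at the smooth point $\hat{P}$ is already $\geq 1$, so the intersection is concentrated at $\hat{P}$ and in particular $Q_3\notin\hat{M}_5$, as desired.

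Thus the crux — and where I expect the real work — is to exclude the case $L\subset\hat{M}_5$: the numerical data alone do not forbid an irreducible cubic containing $L$ and passing through $Q_3$ (for instance $\{y_1^3+y_1'y_2=0\}$), so the multiplicity $\nu(\hat{M}_5)=4$ must be used. If $L\subset\hat{M}_5$, then the equation $h$ of $\hat{M}_5$ lies in the ideal $(y_1,y_2)$, so in the affine chart $\{y_1'\neq 0\}$ it becomes a polynomial in $u:=y_1/y_1'$ and $v:=y_2/y_1'^2$ alone, with $\nu(u)=1$ and $\nu(v)=2$. The plan is then a local computation in the weighted-blow-up coordinates of $\bar{f}$ at the smooth point $\hat{P}$: working in the associated graded ring one checks that such a polynomial cannot have $\nu$-value exactly $4$ unless it is divisible by $u$, i.e. unless $h$ is divisible by $y_1$ — which contradicts the irreducibility of $\hat{M}_5$. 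This rules out $L\subset\hat{M}_5$ and so yields $Q_3\notin\hat{M}_5$, finishing Step 3 and the corollary. The delicate part of this last step is that one genuinely needs the weighted-blow-up structure (equivalently, enough control of $\nu$ beyond its values on $y_1,y_2$) to see that the value $4$ can only arise from a factor of $y_1$; the bare inequalities on $\nu(u),\nu(v)$ do not suffice.
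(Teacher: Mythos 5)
Your construction of $y_1,y_1',y_2$ is correct (and matches the paper: two distinct members of the pencil $|A_{\hat{X}}|$, then irreducibility of $\hat{M}_6$ forcing the $y_2$-coefficient to be nonzero), and your intersection computation $\hat{M}_5\cdot L=6A_{\hat{X}}^3=1$ is a valid alternative reduction of the crux to excluding $L\subset\hat{M}_5$, i.e.\ $\psi_5\in(y_1,y_2)$ with $\upnu_{\bar{F}}(\psi_5)=4$; the paper reaches the same juncture more directly, since uniqueness of the terms of $\upnu$-value $0$ and $1$ already forces $\lambda_0=\lambda_1=0$ in $\psi_5$. But the local lemma you propose to finish with is false, so your final step is a genuine gap. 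The data you allow yourself --- $\hat{P}$ a smooth point, $\nu(u)=1$, $\nu(v)=2$, and the weighted-blow-up structure of $\bar f$ (discrepancy $b=9$, so weights $(1,a,b')$ with $a+b'=9$) --- do not pin down $\nu$ on $(u,v)$-polynomials. Concretely, take analytic coordinates $x_1:=u$, $x_2:=w-w(\hat{P})$, $x_3:=v+u^2+u^3$ at $\hat{P}$ and let $\nu$ be the monomial valuation with weights $(1,2,7)$: this is consistent with everything above, yet $h:=uv+v+2u^3+u^2=x_3(1+x_1)-x_1^4$ has $\nu(h)=4$, is irreducible, and is not divisible by $u$. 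So no associated-graded computation of the kind you sketch can force divisibility by $u$ (nor, with irreducibility added, a contradiction): the obstruction is invisible at $\hat{P}$.

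What is missing is a global input from the other side of the link, and this is exactly how the paper closes the argument. From $\upnu_{\bar{F}}(\psi_5)=4$ and the term-by-term values one only gets $\upnu:=\upnu_{\bar{F}}(\lambda' y_2+\lambda_2 y_1^2)\ge 3$ (in your chart, $\nu(bv+c_2u^2)\ge 3$; if $\lambda'=\lambda_2=0$ then $y_1\mid\psi_5$, contradicting primality of $\hat{M}_5$). The paper then transports the divisor $\hat{D}=\{\lambda' y_2+\lambda_2 y_1^2=0\}\sim 2A_{\hat{X}}\sim 2\hat{M}_7$ back through the link: since $\gamma_7=0$ and $\bar{F}=\bar{M}_4$, the proper transform satisfies $\bar{D}\sim 2\bar{M}_7-\upnu\,\bar{M}_4$, hence $f_*\tilde{D}\sim 2(7-2\upnu)A_X$ is effective with $2(7-2\upnu)\le 2$, contradicting $|A_X|=|2A_X|=\varnothing$ (Corollary~\ref{cor:dim}). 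Note that this is precisely the move that kills the local model above: there $\nu(v+u^2)=3$, and the transported divisor would lie in $|2A_X|$ --- a contradiction detectable only on $X$, not at $\hat{P}$. So your write-up correctly isolates where the work lies, but the proposed local resolution cannot succeed; you need the global transport (or an equivalent use of the $X$-side of the link) at that point.
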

\begin{proof}
Let $\psi_k$ be the equation of $\hat M_k$, $k=3,5,6,7$.
Since $\hat{M}_3\sim \hat{M}_7\sim A_{\hat{X}}$, the equations $\psi_3$ and $\psi_7$ have the form
$\lambda y_1+\lambda' y_1'=0$, where $\lambda$, $\lambda'$ are constants.
Applying linear coordinate change, we can put $\psi_3$ and $\psi_7$ to the desired form.
Similarly,
$\psi_6=\lambda_0 y_2+\lambda_1y_1^2+\lambda_2y_1y_1'+\lambda_3 y_1'^2$.
Since $\hat{M}_6$ is irreducible, $\lambda_0\neq 0$. 
Then the coordinate change
$y_2 \longmapsto y_2-\lambda_1y_1^2-\lambda_2y_1y_1'-\lambda_3 y_1'^2$ put 
$\psi_6$ to the desired form. The same trick works for
$\hat{M}_5$ if its equation contains the term $y_3$. Assume to the contrary that 
\[
 \psi_5=  \lambda y_2y_1+\lambda' y_2y_1'+\sum_{k=0}^3 \lambda_{k} y_1^k\, y_1'^{3-k}. 
\]
Since $\upnu_{\bar{F}}(\psi_5)=4$ and $y_1'^3$ is the only term with $\upnu_{\bar{F}}(y_1'^3)=0$, we have $\lambda_{0}=0$ and
similarly, $\lambda_{1}=0$.
Since  $y_2y_1'$ and $y_1^2\, y_1'$ are the only terms with $\upnu_{\bar{F}}(y_2y_1')$,
$\upnu_{\bar{F}}(y_1^2\, y_1')=2$, we have $\upnu_{\bar{F}}(\lambda' y_2y_1'+\lambda_{2} y_1^2\, y_1')\ge 3$.
Hence, $\upnu:=\upnu_{\bar{F}}(\lambda' y_2+\lambda_{2} y_1^2)\ge 3$.
Let $\hat D$ be the divisor given by $\lambda' y_2+\lambda_{2} y_1^2=0$
and let $\bar D$ (resp. $\tilde D$) be its proper transform on $\bar X$ (resp. $\tilde X$).
Then $\hat D\sim 2\hat M_7$, $\bar D\sim 2\hat M_7-\upnu\bar F= 2\hat M_7-\upnu\bar M_4$, and 
$\tilde D\sim 2\tilde M_7-\upnu\tilde M_4$. But then
$f_*\tilde D$ is an effective divisor such that $f_*\tilde D\sim 2(7-2\upnu) A$, $\upnu\ge 3$.
This contradicts Corollary~\ref{cor:dim}.
\end{proof}

For $m=3,\dots,7$, let $\varphi_m$ be a general section in $H^0(X,mA_X)$.
Once we fix a representative of $A_X$, the space $H^0(X,mA_X)$ has the following
presentation
\[
H^0(X,mA_X) =\lbraces \varphi\in \Bbbk(X)^* \mid mA_X +\di(\varphi)\ge 0 \rbraces \cup \lbraces 0\rbraces.
\]
Thus $\varphi_m$ can be regarded as rational functions on $X$.

\begin{scorollary}
\label{cor:map}
The map $\Psi_1: X \dashrightarrow \hat{X}=\PP(1,1,2,3)$ is given by
\[
P \longmapsto \big(\varphi_3(P)\varphi_4(P),\, \varphi_7(P),\, \varphi_6(P)\varphi_4^2(P),\, \varphi_5(P)\varphi_4^4(P)\big).
\]
\end{scorollary}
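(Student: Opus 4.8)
The plan is to compute the pullback under $\Psi_1$ of each weighted homogeneous coordinate of $\hat{X}=\PP(1,1,2,3)$ by tracking its divisor of zeros, which I denote $\di_0(\,\cdot\,)$. Recall from the preceding Proposition and Corollary that in suitable coordinates $y_1,y_1',y_2,y_3$ (of weights $1,1,2,3$) on $\hat{X}$ one has $\hat{M}_3=\{y_1=0\}$, $\hat{M}_7=\{y_1'=0\}$, $\hat{M}_6=\{y_2=0\}$, $\hat{M}_5=\{y_3=0\}$, with
\[
\hat{M}_3\sim\hat{M}_7\sim A_{\hat{X}},\qquad \hat{M}_6\sim 2A_{\hat{X}},\qquad \hat{M}_5\sim 3A_{\hat{X}},
\]
that $\bar{F}=\bar{M}_4$, and that the multiplicities are $\gamma_3=1$, $\gamma_7=0$, $\gamma_6=2$, $\gamma_5=4$. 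Since $\Psi_1=\bar{f}\circ\chi\circ f^{-1}$ is birational, it suffices to identify $\di_0(\Psi_1^*y_i)$ and compare with the divisor cut out by the proposed monomial.

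First I would record the geometry of the link: $\Psi_1$ is an isomorphism in codimension $1$ except that it contracts the single divisor $M_4=f(\tilde{F})$ to the smooth point $\bar{f}(\bar{F})\in\hat{X}$. Hence the pullback of the defining section of $\hat{M}_k$ vanishes along the proper transform $M_k$ and, in addition, along the contracted divisor $M_4$ to the order by which that section vanishes along $\bar{F}$, namely $\gamma_k$. Concretely, starting from $\bar{f}^*\hat{M}_k=\bar{M}_k+\gamma_k\bar{F}$, one pulls this relation back through $\chi$ (an isomorphism in codimension $1$) and pushes it forward by $f$, using $f_*E=0$, $f_*\tilde{F}=M_4$ and $f_*\tilde{M}_k=M_k$, to obtain
\[
\di_0(\Psi_1^*y_i)=M_{k}+\gamma_{k}\,M_4 ,
\]
where $k\in\{3,7,6,5\}$ is the index attached to $y_i$.

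Substituting the four values of $\gamma_k$ produces the divisors $M_3+M_4$, $M_7$, $M_6+2M_4$ and $M_5+4M_4$, which lie in $|7A_X|$, $|7A_X|$, $|14A_X|$ and $|21A_X|$ respectively; in particular $\Psi_1^*\OOO_{\hat{X}}(1)=\OOO_X(7A_X)$, as is visible already from $y_1'$, for which $\gamma_7=0$. These are exactly the divisors of zeros of the monomials $\varphi_3\varphi_4$, $\varphi_7$, $\varphi_6\varphi_4^2$ and $\varphi_5\varphi_4^4$, once $\varphi_m$ is chosen to cut out $M_m$ (the unique member of $|mA_X|$ for $m=3,4,5$ by Corollary~\ref{cor:dim}, and a general member for $m=6,7$). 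Two sections of the same sheaf $\OOO_X(nA_X)$ on the normal projective variety $X$ with equal divisors of zeros differ by a nonzero constant, so each $\Psi_1^*y_i$ agrees with the corresponding monomial up to scalar; rescaling the coordinates $y_i$ absorbs these constants and gives the stated formula.

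The main obstacle is the middle step: verifying that pulling the coordinate section back through the link introduces the contracted divisor $M_4$ with multiplicity exactly $\gamma_k$ and contributes nothing along the $f$-exceptional divisor $E$. This is precisely where one uses $\bar{F}=\bar{M}_4$ (so that $\bar{F}$ maps to $M_4$ on $X$), the fact that $\bar{f}(\bar{F})$ is a smooth point lying on each $\hat{M}_k$ to order $\gamma_k=\upnu_{\bar{F}}(\hat{M}_k)$, and $f_*E=0$. The remaining content is the routine numerical check $k+4\gamma_k=7w_i$ matching the weights.
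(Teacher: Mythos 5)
Your proposal is correct and is essentially the paper's intended argument: the paper states this corollary without an explicit proof, as an immediate consequence of the preceding proposition and corollary ($\bar F=\bar M_4$, the multiplicities $\gamma_3=1$, $\gamma_7=0$, $\gamma_6=2$, $\gamma_5=4$, and the normalized coordinates with $\hat M_3=\{y_1=0\}$, etc.), which is exactly the data you invoke. Your divisor-tracking computation $\di(\Psi_1^*y_i)_0=M_k+\gamma_k M_4$, the scalar-uniqueness argument on the normal projective $X$, and the degree check $k+4\gamma_k=7w_i$ simply spell out that implicit derivation in full.
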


\begin{scorollary}
\label{cor:bir}
The map $\Psi: X \dashrightarrow \PP(3,4,5,6,7)$ given by
\[
P \longmapsto \big(\varphi_3(P),\, \varphi_4(P),\, \varphi_5(P),\, \varphi_6(P),\,\varphi_7(P)\big) 
\]
is birational onto its image.
\end{scorollary}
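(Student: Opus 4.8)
The plan is to deduce the birationality of $\Psi$ from that of the link map $\Psi_1$ of Corollary~\ref{cor:map}. The key point is that $\Psi_1$, being the composition $\bar{f}\circ\chi\circ f^{-1}$ of the Sarkisov link of Theorem~\ref{thm:|6A|}, is a \emph{birational} map $X\dashrightarrow \hat{X}=\PP(1,1,2,3)$, and by Corollary~\ref{cor:map} it is given in coordinates by the monomials $\varphi_3\varphi_4,\ \varphi_7,\ \varphi_6\varphi_4^2,\ \varphi_5\varphi_4^4$. Each of these is a monomial in the coordinates $\varphi_3,\dots,\varphi_7$ of $\Psi$, so $\Psi_1$ factors through $\Psi$; this will force $\Psi$ to be birational onto its image.

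More precisely, first I would introduce the rational map
\[
\pi\colon \PP(3,4,5,6,7)\dashrightarrow \PP(1,1,2,3),\qquad
(z_3,z_4,z_5,z_6,z_7)\longmapsto \big(z_3z_4,\ z_7,\ z_6z_4^2,\ z_5z_4^4\big),
\]
and check that it is well defined: the four entries are weighted-homogeneous of weighted degrees $7,\,7,\,14,\,21=7\cdot(1,1,2,3)$, so $\pi$ is a morphism of weighted projective spaces away from the locus where all four vanish. By Corollary~\ref{cor:map} one then has the identity of rational maps $\Psi_1=\pi\circ\Psi$ on $X$.

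Finally I would argue on function fields. Write $Y:=\overline{\Psi(X)}\subset\PP(3,4,5,6,7)$; then $\Psi\colon X\dashrightarrow Y$ is dominant by construction, so $\Psi^*$ embeds $\Bbbk(Y)$ into $\Bbbk(X)$, and $\Psi^*\Bbbk(Y)$ is exactly the subfield of $\Bbbk(X)$ generated by the degree-zero ratios of $\varphi_3,\dots,\varphi_7$. This subfield contains the three functions
\[
\frac{\varphi_7}{\varphi_3\varphi_4}=\frac{z_7}{z_3z_4},\qquad
\frac{\varphi_6}{\varphi_3^2}=\frac{z_6}{z_3^2},\qquad
\frac{\varphi_5\varphi_4}{\varphi_3^3}=\frac{z_5z_4}{z_3^3},
\]
which are precisely the $\Psi_1$-pullbacks of the standard affine coordinates $y_1'/y_1,\ y_2/y_1^2,\ y_3/y_1^3$ on $\PP(1,1,2,3)$. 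Since $\Psi_1$ is birational, these three functions generate $\Bbbk(X)$ over $\Bbbk$, and therefore $\Bbbk(X)=\Psi_1^*\Bbbk(\PP(1,1,2,3))=\Psi^*\pi^*\Bbbk(\PP(1,1,2,3))\subseteq \Psi^*\Bbbk(Y)\subseteq \Bbbk(X)$. Hence $\Psi^*\Bbbk(Y)=\Bbbk(X)$, i.e.\ $\Psi$ is birational onto $Y$. Equivalently, this is the multiplicativity of degrees applied to $\Psi_1=\pi\circ\Psi$: from $\deg\Psi_1=1$ and $\deg\Psi_1=\deg(\pi|_Y)\cdot\deg\Psi$ (valid once one notes $\dim Y=3$, since $\pi|_Y$ dominates $\PP(1,1,2,3)$ and $\dim Y\le\dim X=3$) one reads off $\deg\Psi=1$.

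The only genuinely delicate input is that $\Psi_1$ is birational and is literally given by the displayed monomials, which is exactly what the analysis of the Sarkisov link (Theorem~\ref{thm:|6A|} and Corollary~\ref{cor:map}) provides. Granting that, the remaining argument is essentially formal: it reduces to the observation that the three generators of $\Bbbk(\PP(1,1,2,3))$ pull back to degree-zero monomials in $\varphi_3,\dots,\varphi_7$, so they already belong to the function field of $\overline{\Psi(X)}$.
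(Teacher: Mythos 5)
Your proposal is correct and takes essentially the same approach as the paper: the paper's entire proof is the one-line observation that ``the birational map $\Psi_1$ passes through $\Psi$,'' which is precisely your factorization $\Psi_1=\pi\circ\Psi$ with $\pi(z_3,\dots,z_7)=(z_3z_4,\,z_7,\,z_6z_4^2,\,z_5z_4^4)$. Your function-field argument simply makes explicit the standard step (birationality of a composition forces birationality of the first factor onto its image) that the paper leaves to the reader.
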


\begin{proof}
Indeed, the birational map $\Psi_1$ passes through $\Psi$.
\end{proof}

\section{Proof of Theorem~\xref{thm:main}}
Consider the graded algebra
\[
\R=\R(X)= \bigoplus_{m\ge 0}  H^0 (X, mA_X ).
\]
Since $A_X$ is ample, $X=\Proj (\R)$.
Let $\R'\subset \R$ be the graded subalgebra generated by 
$\varphi_3,\dots,\varphi_7$ and let $Y:=\Proj(\R')$. Then $Y$ is naturally embedded
to $\PP(3,4,5,6,7)$. More precisely, $Y\subset \PP(3,4,5,6,7)$ can be regarded 
as the image of the map
\[
\begin{array}{rcl}
\Psi: X&\longrightarrow& \PP(3,4,5,6,7),
\\[0.4em]
\phantom{ \Psi:} P &\longmapsto& (\varphi_{3}(P),\varphi_{4}(P),\varphi_{5}(P),\varphi_{6}(P),\varphi_{7}(P)).
\end{array}
\]
Since the map $\Psi$ is birational onto its image, $\dim Y=3$, 
so $Y$ is a hypersurface in $\PP(3,4,5,6,7) $.
In other words, 
\[
\R'\simeq \Bbbk[x_3,x_4,x_5,x_6,x_7]/(\Upsilon),
\]
where $\Upsilon$ is a quasi-homogeneous polynomial.
It is sufficient to show that $\R=\R'$.
Let $d:=\deg (\Upsilon)$.

For a graded algebra $\mathrm{S}$ by $\mathrm{S}_m$ we denote the 
homogeneous component of degree $m$.

\begin{claim}
$\deg (\Upsilon)\ge 10$. 
\end{claim}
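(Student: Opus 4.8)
The plan is to translate the statement $\deg(\Upsilon)\ge 10$ into the assertion that there is \emph{no} nonzero quasi-homogeneous relation among $\varphi_3,\dots,\varphi_7$ in any weighted degree $m\le 9$. Since $Y$ is a hypersurface, the ideal of relations is principal, generated by $\Upsilon$; so the minimal degree in which a relation occurs is exactly $d=\deg(\Upsilon)$, and proving the absence of relations in degrees $\le 9$ gives $d\ge 10$. Concretely, $\R'_m$ is the span inside $H^0(X,mA_X)=\R_m$ of the monomials $\varphi_3^{a}\varphi_4^{b}\varphi_5^{c}\varphi_6^{e}\varphi_7^{h}$ of weighted degree $m$, and a relation of degree $m$ exists precisely when these monomials are linearly \emph{dependent}. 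By \eqref{prop:QFano13b} the number of such monomials equals $\dim\R_m$ for every $m\le 11$, because the factor $1-t^{12}$ only modifies the coefficients of $\mathrm{p}_X(t)$ from degree $12$ on. Hence it suffices to show that in each degree $3\le m\le 9$ the relevant monomials are linearly independent, i.e. form a basis of $\R_m$.

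The low degrees are routine. For $m=3,4,5$ there is a single monomial and $\dim\R_m=1$, so nothing is to be checked. For $m=6$ and $m=7$ the monomials are $\{\varphi_3^2,\varphi_6\}$ and $\{\varphi_3\varphi_4,\varphi_7\}$; here $\dim\R_m=2$ and, by Corollary~\ref{cor:dim}, $\MMM_6$ and $\MMM_7$ are pencils, so a general $\varphi_6$ (resp. $\varphi_7$) is not proportional to the fixed section $\varphi_3^2$ (resp. $\varphi_3\varphi_4$), which yields independence.

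The substantive cases are $m=8$ and $m=9$, which I expect to be the main obstacle, since generality is no longer available: the monomials are built from the rigid sections $\varphi_3,\varphi_4,\varphi_5$, whose zero divisors are the \emph{unique} prime members $M_3,M_4,M_5$ of $|3A_X|,|4A_X|,|5A_X|$ (Corollary~\ref{cor:dim}). For $m=8$ the two monomials are $\varphi_4^2$ and $\varphi_3\varphi_5$; a proportionality would force the divisor equality $2M_4=M_3+M_5$, impossible since $M_3,M_4,M_5$ are distinct prime divisors, so they are independent. For $m=9$ the three monomials are $\varphi_3^3,\ \varphi_4\varphi_5,\ \varphi_3\varphi_6$; restricting a putative relation $\lambda\varphi_3^3+\mu\varphi_4\varphi_5+\nu\varphi_3\varphi_6=0$ to the prime divisor $M_3$ kills the two terms divisible by $\varphi_3$, while $\varphi_4\varphi_5$ does not vanish along $M_3$ (again by distinctness, $M_3$ is not a component of $M_4+M_5$), forcing $\mu=0$; dividing the remainder by $\varphi_3$ leaves $\lambda\varphi_3^2+\nu\varphi_6=0$, contradicting the independence already obtained in degree $6$. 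This establishes independence in all degrees $m\le 9$, whence $\deg(\Upsilon)\ge 10$.
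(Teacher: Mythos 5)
Your proof is correct, and for the cases $d\le 8$ it is essentially the paper's argument in a different dress: the paper argues by the possible shapes of $\Upsilon$ in degrees $6,7,8$ (e.g.\ $d=6$ would force $\varphi_6=\lambda\varphi_3^2$, contradicting Corollary~\ref{cor:dim}), which is the same as your degree-by-degree linear independence of monomials. The genuine divergence is the hardest case $d=9$: the paper invokes Corollary~\ref{cor:map}, observing that a relation $a_1\varphi_4\varphi_5+a_2\varphi_3\varphi_6+a_3\varphi_3^3=0$ would confine the image of $\Psi_1$ to the surface $a_1z_3+a_2z_1z_2+a_3z_1^3=0$ in $\PP(1,1,2,3)$, contradicting birationality of $\Psi_1$ --- so it leans on the full Sarkisov-link machinery of Sections 3--4. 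You instead give an elementary divisor-theoretic argument: the relation rewritten as $\mu\varphi_4\varphi_5=-\varphi_3(\lambda\varphi_3^2+\nu\varphi_6)$ forces, on the level of zero divisors of sections of $\OOO_X(9A_X)$, the containment $M_3\le M_4+M_5$ if $\mu\neq0$, impossible since $M_3,M_4,M_5$ are distinct prime divisors (distinct because $M_3=M_4$ or $M_3=M_5$ would give $A_X\sim 0$); then cancelling $\varphi_3$ reduces to the degree-$6$ case, settled by generality of $\varphi_6$ in the pencil $\MMM_6$. Your route uses only Corollary~\ref{cor:dim} and is self-contained, which is a real simplification of this step; the paper's route is a one-liner only because Corollary~\ref{cor:map} is already available (and the link is needed anyway for the birationality of $\Psi$, without which the whole hypersurface argument does not start). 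One small point of rigor: since the $\varphi_m$ are rational functions, ``restricting to $M_3$'' should be phrased as comparing zero divisors of the corresponding sections of $\OOO_X(9A_X)$ on the normal variety $X$, as above --- with that rephrasing your argument is airtight; also note the case $\lambda\varphi_3^2+\nu\varphi_6=0$ identically gives $\mu=0$ directly, so no generality is lost.
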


\begin{proof}
Clearly, $d\ge 6$.
If $d=6$, then $\Upsilon$ has the form
$\Upsilon=a_1x_6+a_2x_3^2$. Hence, 
the relation $\varphi_6=\lambda \varphi_3^2$
holds in $\R$ for some $\lambda\in \Bbbk$.
But this contradicts Corollary~\ref{cor:dim}.

The cases $d=7$ and $d=8$
are considered similarly.
Let $d=9$. Then $\Upsilon$ has the form
$\Upsilon=a_1x_4x_5+a_2x_3x_6+a_3x_3^3$.
By Corollary~\ref{cor:map} the image of $Y$ in $\hat{X}=\PP(1,1,2,3)$
is contained in the surface $a_1z_3+a_2z_1z_2+a_3z_1^3=0$. This contradicts the birationality of $\Psi$. 
\end{proof}

\begin{claim}
$\deg (\Upsilon)=12$. 
\end{claim}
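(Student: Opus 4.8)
The plan is to show that $\deg(\Upsilon)=12$ by ruling out the remaining degrees $d=10$ and $d=11$, since the previous claim already gives $d\ge 10$, and an upper bound $d\le 12$ comes from comparing Hilbert series. First I would establish the upper bound: we have the surjection $\R'\simeq \Bbbk[x_3,\dots,x_7]/(\Upsilon)\to \R$ is in fact an inclusion $\R'\subset \R$, and comparing dimensions of graded pieces using the Hilbert series \eqref{prop:QFano13b} of $X$ forces $d\le 12$. Indeed, the Hilbert series of $\Bbbk[x_3,\dots,x_7]/(\Upsilon)$ is $(1-t^d)/\prod_{m=3}^7(1-t^m)$, and since $\dim \R'_m\le \dim \R_m = \dim H^0(X,mA_X)$ for all $m$, and $\mathrm{p}_X(t)=(1-t^{12})/\prod_{m=3}^7(1-t^m)$, the coefficientwise inequality $[\,(1-t^d)/\prod(1-t^m)\,]_m \le [\,(1-t^{12})/\prod(1-t^m)\,]_m$ forces $d\le 12$.

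Next I would eliminate $d=10$ and $d=11$ by the same geometric method used for $d\le 9$ in the previous claim, namely by pushing the relation $\Upsilon=0$ through the birational map $\Psi_1\colon X\dashrightarrow \hat X=\PP(1,1,2,3)$ of Corollary~\ref{cor:map}. Recall that under $\Psi_1$ the coordinate of weight $m$ is recovered from $\varphi_m$, so a quasi-homogeneous relation among the $\varphi_m$ translates into an equation satisfied on $\hat X$; if that equation is nontrivial it cuts out a proper subvariety, contradicting the fact that the composite $\Psi_1$ is dominant (indeed birational). For $d=10$ the monomials of weighted degree $10$ in $x_3,\dots,x_7$ are $x_3x_7$, $x_4x_6$, $x_5^2$, $x_3^2x_4$, so $\Upsilon$ is a linear combination of these; substituting the coordinates of Corollary~\ref{cor:map} gives a nonzero relation on $\PP(1,1,2,3)$ (after dividing by the appropriate power of $\varphi_4$), contradicting birationality of $\Psi_1$. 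The case $d=11$ is handled analogously, listing the degree-$11$ monomials $x_4x_7$, $x_5x_6$, $x_3x_4^2$, $x_3^2x_5$ and checking that the induced relation on $\hat X$ is nontrivial.

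The main obstacle, and the step requiring genuine care rather than routine bookkeeping, is verifying that the induced equation on $\hat X$ is genuinely nontrivial in each case, i.e.\ that no unexpected cancellation occurs and that $\Upsilon$ cannot be forced to vanish identically. The subtlety is that the $\varphi_m$ are not independent variables but satisfy exactly the single relation $\Upsilon$, so I must argue that if the only relation among them had degree $10$ or $11$, then the image $Y\subset\PP(3,4,5,6,7)$ would map into a proper hypersurface of $\hat X=\PP(1,1,2,3)$ under the projection implicit in Corollary~\ref{cor:map}; since $\Psi_1$ is birational onto all of $\hat X$ (it is a Sarkisov link in the sense of Theorem~\ref{thm:|6A|}), this is impossible. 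Concretely, one checks that the monomials of degree $10$ (resp.\ $11$) in the $x_m$, after the substitution of Corollary~\ref{cor:map} and clearing the common factor $\varphi_4^{\,j}$, span functions that do not all lie in a single hyperplane section of $\hat X$, which is what forces $d=12$.
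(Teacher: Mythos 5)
There is a genuine gap in the second half of your argument: for $d=10$ and $d=11$ the substitution of Corollary~\ref{cor:map} does not produce any equation on $\hat{X}=\PP(1,1,2,3)$, so the contradiction with birationality of $\Psi_1$ that you are counting on cannot be extracted. The reason the analogous step works for $d=9$ in the previous claim is a numerical coincidence: a monomial $x_3^{p_3}x_4^{p_4}x_5^{p_5}x_6^{p_6}x_7^{p_7}$ is a monomial in the four functions $x_3x_4$, $x_7$, $x_6x_4^2$, $x_5x_4^4$ if and only if the ``defect'' $p_4-p_3-2p_6-4p_5$ vanishes, and this defect equals $-3$ for all three degree-$9$ monomials $x_4x_5$, $x_3x_6$, $x_3^3$, so multiplying by the single monomial $x_4^3$ clears it simultaneously. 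For $d=10$ the defects of $x_3x_7$, $x_4x_6$, $x_3^2x_4$ are $-1$ while that of $x_5^2$ is $-8$; for $d=11$ the defects of $x_4x_7$, $x_3x_4^2$ are $+1$ while those of $x_5x_6$, $x_3^2x_5$ are $-6$. Hence no monomial multiple of $\Upsilon$ is a polynomial in the coordinates of Corollary~\ref{cor:map}, and cancellations cannot rescue this: for instance, for $d=10$ with $a_3\neq 0$, writing $a_3x_5^2=-(a_1x_3x_7+a_2x_4x_6+a_4x_3^2x_4)$ and multiplying by $x_4^8$ gives $a_3y_3^2=-x_4^7\bigl(a_1y_1y_1'+a_2y_2+a_4y_1^2\bigr)$, so $x_4^7$ lies in the field generated by $y_1,y_1',y_2,y_3$, which therefore has transcendence degree $4$; the four coordinate functions are algebraically independent modulo such an $\Upsilon$, the image of the composite map is dense in $\PP(1,1,2,3)$, and no induced relation exists. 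So what you flag as the ``main obstacle'' (verifying nontriviality of the induced relation) is not a matter of checking for cancellation: the induced relation simply does not exist for $d=10,11$, and this route fails.

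The fix is that your first step, applied at the single value $m=12$, already does all the work, and this is essentially the paper's own proof. There are exactly six monomials of degree $12$ in $x_3,\dots,x_7$, while $\dim\R_{12}=5$ by \eqref{prop:QFano13b}. If $d\neq 12$ (with $d\ge 10$), the ideal $(\Upsilon)$ contains no element of degree $12$: the complementary factor would have to be quasi-homogeneous of degree $12-d$, which is negative for $d>12$ and equals $1$ or $2$ for $d=10,11$, impossible since every variable has degree $\ge 3$; hence $\dim\R'_{12}=6>5=\dim\R_{12}$, contradicting $\R'\subseteq\R$. This simultaneously yields the upper bound $d\le 12$ and excludes $d=10,11$, so your general coefficientwise comparison (which is correct but asserted without justification; for $d>12$ one must actually exhibit an $m$ with $\dim\R'_m>\dim\R_m$, e.g.\ $d=13$ fails at $m=18$ because the coefficients of $1/\prod_{m=3}^7(1-t^m)$ grow) becomes unnecessary. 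The paper phrases the same count dually: the six degree-$12$ monomials in the $\varphi_m$ are linearly dependent in $\R$, yielding a nontrivial relation $\Upsilon_1$ of degree $12$; irreducibility of $Y$ gives $\Upsilon\mid\Upsilon_1$, and the quotient, being quasi-homogeneous of degree $\le 2$, must be a nonzero constant.
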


\begin{proof}
One can see that $\R_{12}$ contains 6 elements 
\[
\varphi_6^2,\quad
\varphi_5\varphi_7,\quad
\varphi_4^3,\quad
\varphi_3\varphi_4\varphi_5,\quad
\varphi_3^2\varphi_6,\quad
\varphi_3^4.
\]
Since $\dim \R_{12}=5$ by \eqref{prop:QFano13b},
these elements are linear dependent. Hence there is a non-trivial relation
$\Upsilon_1(\varphi_3, \dots, \varphi_7)=0$
of degree $12$ 
between $\varphi_3, \dots, \varphi_7$.
Since $Y$ is irreducible, its equation $\Upsilon$ divides $\Upsilon_1$.
Thus $\Upsilon_1=\Upsilon\, \Upsilon_2$ for some quasi-homogeneous polynomial $\Upsilon_2$ of degree $\le 2$.
On the other hand, degrees of all variables are $\ge 3$. Hence, $\Upsilon_2$ is a constant
and $\deg(\Upsilon)=12$.
\end{proof}
\begin{claim}
$\R'=\R$.
\end{claim}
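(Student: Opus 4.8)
The plan is to deduce $\R'=\R$ purely from Hilbert series, using the two preceding claims, which have already identified $\R'$ as the coordinate ring of a degree-$12$ hypersurface. No further geometry is needed.

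First I would record the algebraic content of what has been proved. The tautological graded surjection $\Bbbk[x_3,\dots,x_7]\twoheadrightarrow\R'$, $x_m\mapsto\varphi_m$, has kernel a homogeneous ideal $I$, and by the previous claims $\R'\simeq \Bbbk[x_3,\dots,x_7]/(\Upsilon)$ with $I=(\Upsilon)$ and $\deg(\Upsilon)=12$. Note that $\R'$ is a domain, being a graded subring of the integral domain $\R=\R(X)$; hence $I$ is prime, $\Upsilon$ is irreducible, and in particular $\Upsilon$ is a non-zero-divisor in $S:=\Bbbk[x_3,\dots,x_7]$.

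Next I would compute the Hilbert series of $\R'$. Since $\Upsilon$ is a non-zero-divisor of degree $12$, multiplication by $\Upsilon$ fits into a short exact sequence of graded $S$-modules
\[
0 \longrightarrow S(-12) \xrightarrow{\,\cdot\Upsilon\,} S \longrightarrow \R' \longrightarrow 0 .
\]
Taking Hilbert series and using $H_S(t)=\tfrac{1}{\prod_{m=3}^7(1-t^m)}$ gives
\[
\sum_{m\ge 0}(\dim_{\Bbbk}\R'_m)\,t^m=(1-t^{12})\,H_S(t)=\frac{1-t^{12}}{\prod_{m=3}^7(1-t^m)}=\mathrm{p}_X(t),
\]
where the last equality is \eqref{prop:QFano13b}. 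By definition $\mathrm{p}_X(t)=\sum_{m\ge 0}(\dim_{\Bbbk}\R_m)\,t^m$, so $\dim_{\Bbbk}\R'_m=\dim_{\Bbbk}\R_m<\infty$ for every $m$.

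Finally I would conclude by the graded-inclusion argument: the embedding $\R'\subseteq\R$ is degree-preserving, so $\R'_m\subseteq\R_m$ for all $m$, and equality of the (finite) dimensions forces $\R'_m=\R_m$ in every degree. Hence $\R'=\R$, and therefore $X=\Proj(\R)=\Proj(\R')=Y$ is the degree-$12$ hypersurface in $\PP(3,4,5,6,7)$. I do not expect a genuine obstacle here: the whole weight of the theorem rests on the birationality of $\Psi$ (Corollary~\ref{cor:bir}) and the degree computation already carried out, after which this last claim is essentially a dimension count. The only point requiring a moment's care is that $\Upsilon$ be a non-zero-divisor, so that the quotient has the expected Hilbert series; this is immediate from $\R'$ being a subring of the domain $\R(X)$.
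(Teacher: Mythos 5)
Your proposal is correct and follows the paper's own route exactly: compare the Hilbert series of $\R'$ (as a hypersurface quotient $\Bbbk[x_3,\dots,x_7]/(\Upsilon)$ with $\deg\Upsilon=12$) with $\mathrm{p}_X(t)$ from \eqref{prop:QFano13b}, then conclude degreewise equality from the graded inclusion $\R'\subseteq\R$. The only difference is that you spell out what the paper leaves as ``easy to see'' --- the non-zero-divisor property of $\Upsilon$ (which in fact needs only $\Upsilon\neq 0$ in the domain $S$) and the exact sequence $0\to S(-12)\to S\to\R'\to 0$ --- which is a welcome but inessential elaboration.
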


\begin{proof}
Since $\R'\subset \R$, it is sufficient to show that $\dim \R'_m=\dim \R_m$
for all $m$, or equivalently, $\mathrm{p}_X(t)=\mathrm{p}_{\R'}(t)$.
By \eqref{prop:QFano13b}
\[ 
\mathrm{p}_X(t)=\frac{1-t^{12}}{(1-t^3)(1-t^4)(1-t^5)(1-t^6)(1-t^7)}.
\]
On the other hand, $\R'$ is the quotient of the polynomial ring 
by an ideal generated by one element.
Then it is easy to see that the Hilbert series $\mathrm{p}_{\R'}(t)$ of the algebra $\R'$ has the same form.
\end{proof}
Thus $\R'=\R$ and so $X=\Proj \left(\Bbbk[x_3,\dots,x_7]/(\Upsilon)\right)$, where $\deg(\Upsilon)=12$.
It remains to show that the equation $\Upsilon$
can be written in the form \ref{thm:main} \ref{thm:maina}  or \ref{thm:mainb}.

\begin{claim}
$x_5x_7\in \Upsilon$, $x_6^2\in \Upsilon$ and $x_4^3\in \Upsilon$.
\end{claim}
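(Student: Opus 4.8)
The plan is to begin by pinning down the monomials that can occur. A monomial $x_3^{a}x_4^{b}x_5^{c}x_6^{d}x_7^{e}$ has degree $12$ exactly when $3a+4b+5c+6d+7e=12$, and a direct check shows that the only solutions are the six monomials already recorded in the previous claim, namely
\[
x_3^4,\quad x_3^2x_6,\quad x_3x_4x_5,\quad x_4^3,\quad x_6^2,\quad x_5x_7 .
\]
Hence I may write
\[
\Upsilon=a_1x_6^2+a_2x_5x_7+a_3x_4^3+a_4x_3x_4x_5+a_5x_3^2x_6+a_6x_3^4,
\]
and the assertion is precisely that $a_1,a_2,a_3\neq 0$. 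The whole argument then runs through the local structure of $X$ at the singular points of its basket $\B(X)=(2,3,3,5,7)$ (Proposition~\ref{prop:QFano13}).

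For $a_2$ I would examine the index-$7$ point. Since no degree-$12$ monomial is a pure power of $x_7$, the coordinate vertex $P_7=(0:0:0:0:1)$ always lies on $X$; being the only point of $\PP(3,4,5,6,7)$ of index $7$ on $X$, it must coincide with $\rP_7$. In the affine chart $x_7=1$ the equation $\Upsilon(x_3,x_4,x_5,x_6,1)=0$ presents the germ of $X$ at $\rP_7$ as a $\frac1{7}(3,4,5,6)$-quotient of a hypersurface, and among the six monomials only $x_5x_7$ contributes a linear term (it becomes $x_5$). If $a_2=0$ then $\Upsilon$ does not involve $x_7$ at all, so $X$ is a cone with vertex $P_7$ and $\rP_7$ fails to be a terminal cyclic quotient singularity, contradicting $\B(X)=(2,3,3,5,7)$. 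Therefore $a_2\neq 0$, i.e. $x_5x_7\in\Upsilon$. The same conclusion follows verbatim from $\rP_5=P_5$, where in the chart $x_5=1$ the monomial $x_5x_7$ again supplies the unique linear term $x_7$.

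For $a_1$ and $a_3$ I would use the coordinate vertices $P_4=(0:1:0:0:0)$ and $P_6=(0:0:0:1:0)$, which carry the ambient quotient structures of index $4$ and $6$. Evaluating gives $\Upsilon(P_4)=a_3$ and $\Upsilon(P_6)=a_1$, so $P_4\in X$ iff $a_3=0$ and $P_6\in X$ iff $a_1=0$. But $\B(X)$ contains no point of index $4$ or $6$: were $P_4\in X$ it would be a singular point of index divisible by $4$, and were $P_6\in X$ one of index divisible by $6$, neither of which occurs in $(2,3,3,5,7)$. Hence $a_3\neq 0$ and $a_1\neq 0$, i.e. $x_4^3,x_6^2\in\Upsilon$. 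Equivalently, one can argue through $\rP_2$ directly: the index-$2$ locus of the ambient space is the line $L=\{x_3=x_5=x_7=0\}\cong\PP(4,6)$, on which $\Upsilon$ restricts to $a_3x_4^3+a_1x_6^2$, and the unique index-$2$ point $\rP_2$ must lie on $L\setminus\{P_4,P_6\}$, which again forces both coefficients to be nonzero.

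The step I expect to require the most care is the local singularity bookkeeping: confirming that the points $\rP_2,\rP_5,\rP_7$ of $X$ are exactly the points where the hypersurface meets the orbifold strata of $\PP(3,4,5,6,7)$, and that ``terminal of the prescribed index'' is, chart by chart, equivalent to quasi-smoothness of $\Upsilon$, so that the presence of the relevant linear term is genuinely forced rather than merely generic. Once this dictionary between $\B(X)$ and the local equations is in place, the three nonvanishing statements reduce to the elementary evaluations above.
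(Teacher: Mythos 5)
Your proposal is correct and follows essentially the same route as the paper, whose one-line proof says exactly this: the absence of $x_5x_7$ (resp.\ $x_4^3$, $x_6^2$) would force the coordinate point $(0{:}0{:}0{:}0{:}1)$ (resp.\ $(0{:}1{:}0{:}0{:}0)$, $(0{:}0{:}0{:}1{:}0)$) to lie on $X$ as a singularity that is not a terminal cyclic quotient point compatible with $\B(X)=(2,3,3,5,7)$. Your write-up merely makes explicit the local chart computations and the index bookkeeping (e.g.\ that $P_4\in X$ would have index $4$, and the alternative check along $\PP(4,6)$) that the paper compresses into ``other assertions are proved similarly.''
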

\begin{proof}
Indeed if $x_5x_7\notin \Upsilon$ the point $(0,0,0,0,1)\in X\subset \PP(3,4,5,6,7)$ is not a cyclic quotient singularity. Other assertions
are proved similarly. 
\end{proof}

Thus after an obvious coordinate change we may assume that $\Upsilon$ has the form
\[
\Upsilon=x_5x_7+x_4^3+ x_6^2+ \lambda_1 x_3^2x_6+ \lambda_1 x_3^4.
\]
Completing the square we may reduce the equation $\Upsilon$ to one of the desired forms~\ref{thm:main} \ref{thm:maina}  or~\ref{thm:mainb}.

\newcommand{\etalchar}[1]{$^{#1}$}
\def\cprime{$'$}

\end{document}